\algrenewcommand\algorithmicdo{}
\algrenewcommand\algorithmicindent{.7em}%
\DeclareRobustCommand{\us}[2]{#1}
\newcommand*{\gra}{\cellcolor{gray!50}}
\newcommand*{\OO}{\tilde{O}}
\newcommand*{\E}{\mathrm{E}}
\newcommand*{\W}{\Omega}
\newcommand*{\R}{\mathbb{R}}
\newcommand*{\Z}{\mathbb{Z}}
\newcommand*{\sem}{\ell}
\newcommand*{\ops}{\epsilon}
\newcommand*{\val}{\mathit{val}}
\newcommand*{\valmax}{\mathit{val}_\mathit{max}}
\newcommand*{\hmax}{h_\mathit{max}}
\newcommand*{\Rmax}{R_\mathit{max}}
\newcommand*{\gb}{\gamma}
\newcommand{\Select}{\underline{Select}}
\newcommand{\Evaluate}{\underline{Evaluate}}
\newcommand{\Group}{\underline{\smash{Group}}}
\newcommand{\Divide}{\underline{Divide}}
\newcommand*{\abs}[1]{\lvert #1 \rvert}
\newcommand*{\norm}[1]{\lVert #1 \rVert}
\newcommand*{\floor}[1]{\lfloor #1 \rfloor}
\newcommand*{\bbfloor}[1]{\biggl\lfloor #1 \biggr\rfloor}
\newtheorem{theorem}{Theorem}
\newtheorem{assumption}{Assumption}
\newtheorem{corollary}{Corollary}
\newtheorem{lemma}{Lemma}
\newtheorem{assA}{Assumption}
\newtheorem{assB}{Assumption}
\theoremstyle{definition}
\newtheorem{definition}{Definition}
\begin{document}

\title{Global Continuous Optimization with Error Bound \\ and Fast
Convergence}

\author{\name Kenji Kawaguchi \email kawaguch@mit.edu\\
\addr Massachusetts Institute of Technology\\
Cambridge, MA, USA
\AND
\name Yu Maruyama \email maruyama.yu@jaea.go.jp\\
\addr Nuclear Safety Research Center\\ Japan Atomic Energy Agency\\
Tokai, Japan
\AND
\name Xiaoyu Zheng \email zheng.xiaoyu@jaea.go.jp\\
\addr Nuclear Safety Research Center\\  Japan Atomic Energy Agency\\
Tokai, Japan}


\maketitle

\begin{abstract}
This paper considers global optimization with a black-box unknown
objective function that can be non-convex and non-differentiable. Such a
difficult optimization problem arises in many real-world applications,
such as parameter tuning in machine learning, engineering design problem,
and planning with a complex physics simulator. This paper proposes a
new global optimization algorithm, called \emph{Locally Oriented Global
Optimization} (LOGO), to aim for both fast convergence in practice and
finite-time error bound in theory. The advantage and usage of the new
algorithm are illustrated via theoretical analysis and an experiment
conducted with $11$ benchmark test functions. Further, we modify the
LOGO algorithm to specifically solve a planning problem  via policy search with continuous
state/action space and long time horizon while maintaining its finite-time
error bound. We apply the proposed planning method to accident management
of a nuclear power plant. The result of the application study demonstrates
the practical utility of our method.
\end{abstract}

\section{Introduction}\label{sec1}

Optimization problems are prevalent and have held great importance
throughout history in engineering applications and scientific
endeavors. For instance, many problems in the field of artificial
intelligence (AI) can be viewed as optimization problems. Accordingly,
generic \emph{local} optimization methods, such as hill climbing and the
gradient method, have been successfully adopted to solve AI problems since
early research on the topic \cite{kir70,gul94,dei11}. On the other hand,
the application of \emph{global} optimization to AI problems has been
studied much less despite its  practical importance. This
is mainly due to the lack of necessary computational power in the past
and the absence of a practical global optimization method with a strong
theoretical basis. Of these two obstacles, the former is becoming less serious today, as evidenced by a number of studies on global
optimization in the past two decades \cite{hor90,ryo96,he04,rio13}. The
aim of this paper is to partially address the latter obstacle.

The inherent difficulty of the global optimization problem has led
to two distinct research directions: development of heuristics
without theoretically guaranteed performance and advancement of
theoretically supported methods regardless of its difficulty. A degree
of practical success has resulted from heuristic approaches such as simulated annealing, genetic algorithms
(for a brief introduction on the context of AI, see \citeR{rus09}), and
swarm-based optimization (for an interesting example
of a recent study, see \citeR{dal09}). Although these methods are heuristics without
strong theoretical supports, they became very popular partly because
their optimization mechanisms aesthetically mimic nature's physical or
biological optimization mechanism.

On the other hand, the Lipschitzian approach to global optimization
aims to accomplish the global optimization task in a theoretically
supported manner. Despite its early successes in theoretical viewpoints
\cite{shu72,mla86,pin86,han91}, the early studies were based on an
assumption that is impractical in most applications: the Lipschitz
constant, which is the bound of the slope on the objective function,
is known. The relaxation of this crucial assumption resulted in the
well-known DIRECT algorithm \cite{jon93} that has worked well in practice,
yet guarantees only consistency property. Recently, the Simultaneous
Optimistic Optimization (SOO) algorithm \cite{mun11}  achieved
the guarantee of a finite-time error bound without
knowledge of the Lipschitz constant.
However, the practical performance of the algorithm is unclear.

In this paper, we propose a generic global optimization algorithm that
is aimed to achieve both a satisfactory performance in practice and a
finite-loss bound as the theoretical basis without strong additional
assumption%
\footnote{In this paper, we use the term ``strong additional assumption''
to indicate the assumption that the tight Lipschitz constant is known
and/or the main assumption of many Bayesian optimization methods that
the objective function is a sample from Gaussian process with some known
kernel and hyperparameters.}
(Section~\ref{sec2}), and apply it to an AI planning problem
(Section~\ref{sec3}). For AI planning problem, we aim at solving
 real-world engineering problem with a long planning
horizon and with continuous state/action space. As an illustration of
the advantage of our method, we present the preliminary results of an
application study conducted on accident management of a nuclear power
plant as well.
Note that the optimization problems discussed in this paper are practically relevant  yet inherently  difficult to scale up for higher dimensions, i.e., NP-complete \cite{murty1987some}. Accordingly, we discuss  possible extensions of our algorithm for higher dimensional problems, with an experimental illustration with a 1000-dimensional problem. 

\section{Global Optimization on Black-Box Function}\label{sec2}

The goal of global optimization is to solve the following very general
problem:
\begin{gather*}
{\max}_x f(x)\\
\text{subject to }x\in \W
\end{gather*}
where $f$ is the objective function defined on the domain $\W\subseteq
\R^D$. Since the performance of our proposed algorithm is independent of
the scale of $\W$, we consider the problem with the rescaled domain $\W' =
[0, 1]^D$. Further, in this paper, we focus on a deterministic function~%
$f$. For global optimization, the performance of an algorithm can be
assessed by the loss $r_n$, which is given by
\[
r_n=\max_{x\in\W'} f(x) - f(x^+(n)).
\]
Here, $x^+(n)$ is the best input vector found by the algorithm after
$n$ trials (more precisely, we define $n$ to denote the total number of
divisions in the next section).

A minimal assumption that allows us to solve this problem is that
the objective function~$f$ can be evaluated at all points of $\W'$
in an arbitrary order. In most applications, this assumption is easily
satisfied, for example, by having a simulator of the world dynamics or an
experimental procedure that defines $f$ itself. In the former case, $x$
corresponds to the input vector for a simulator $f$, and only the ability
to arbitrarily change the input and run the simulator satisfies the
assumption. A possible additional assumption is that the gradient of function
$f$ can be evaluated. Although this assumption may produce some
effective methods, it  limits the applicability in terms of
real-world applications. Therefore, we assume the existence of a simulator
or a method to evaluate $f$, but not an access to the gradient of $f$. The
methods in this scope are often said to be \emph{derivative-free} and
the objective function is said to be a \emph{black-box} function.

However, if no further assumption is made, this very general problem
is proven to be intractable. More specifically, any number of function
evaluations cannot guarantee getting close to the optimal (maximum)
value of $f$ \cite{dix78}. This is because the solution may exist in an
arbitrary high and narrow peak, which makes it impossible to relate the
optimal solution to the evaluations of $f$ at any other points.

One of the simplest additional assumptions to restore the tractability would be
that the slope of $f$ is bounded. The form of this assumption studied
the most is Lipschitz continuity for~$f$:
\begin{equation}\label{eq1}
\abs{f(x_1)-f(x_2)}\le b\norm{x_1-x_2},\quad
\forall x_1,x_2 \in \W',
\end{equation}
where $b > 0$ is a constant, called the \emph{Lipschitz constant}, and
$\norm{\,\cdot\,}$ denotes the Euclidean norm. The global optimization with
this assumption is referred to as \emph{Lipschitz optimization}, and has
been studied for a long time. The best-known algorithm in the early days
of its history was the Shubert algorithm \cite{shu72}, or equivalently the
Piyavskii algorithm \cite{piy67} as the same algorithm was independently
developed. Based on the assumption that the Lipschitz constant is known,
it creates an upper bound function over the objective function and
then chooses a point of $\W'$ that has the highest upper bound at each
iteration. For problems with higher dimension $D\ge 2$, finding the point
with the highest upper bound becomes difficult and many algorithms have
been proposed to tackle the problem \cite{may84,mla86}. These algorithms
successfully provided finite-loss bounds.

However appealing from a theoretical point of view, a practical
concern was soon raised regarding the assumption that the Lipschitz
constant is known. In many applications, such as with a complex physics
simulator as an objective function $f$, the Lipschitz constant is indeed
unknown. Some researchers aimed to relax this somewhat impractical
assumption by proposing procedures to estimate the Lipschitz constant
during the optimization process \cite{str73,kva03}. Similarly, the Bayesian optimization method with upper
confidence bounds \cite{bro09} \textit{estimates} the objective function and its upper confidence bounds with a certain model assumption, avoiding the prior knowledge on the Lipschitz constant. Unfortunately,
this approach, including the Bayesian optimization method, results in
mere heuristics unless the several additional assumptions hold. The
most notable of these assumptions are that an algorithm can maintain the
overestimate of the upper bound and that finding the point with
the highest upper bound can be done  in a timely manner. As noted
by \citeA{han95}, it is unclear if this approach provides any advantage,
considering that other successful heuristics are already available. This
argument still applies to this day to relatively recent algorithms such
as those by \citeA{kva03} and \citeA{bub11}.

Instead of trying to estimate the unknown Lipschitz constant, the
well-known DIRECT algorithm \cite{jon93} deals with the unknowns by
simultaneously considering all the possible Lipschitz constants, $b$:
$0 < b < \infty$. Over the past decade, there have been many successful
applications of the DIRECT algorithm, even in large-scale engineering
problems (\citeR{car01,he04}; \citeR{zwo05}). Although it works well in many practical
problems, the DIRECT algorithm only guarantees consistency property,
$\lim_{n\to\infty}r_n=0$ \cite{jon93,mun13}.

The SOO algorithm \cite{mun11} expands the DIRECT algorithm and solves
its major issues, including its weak theoretical basis. That is, the SOO
algorithm not only guarantees the finite-time loss bound without knowledge
of the slope's bound, but also employs a weaker assumption. In
contrast to the Lipschitz continuity assumption used by the DIRECT
algorithm (Equation~\eqref{eq1}), the SOO algorithm only requires the
local smoothness assumption described below.

\begin{assumption}[Local smoothness]\label{ass1}
The decreasing rate of the objective function $f$ around at least
one global optimal solution $\{x^*\in\W' : f(x^*)=\sup_{x\in\Omega'} f(x)\}$
is bounded by a~semi-metric $\sem$, for any $x\in\W'$ as
\[
f(x^*)-f(x)\le\sem(x,x^*).
\]
\end{assumption}

Here, semi-metric is a generalization of metric in that it does not
have to satisfy the triangle inequality. For instance, $\sem(x,x^*) =
b\norm{x^*-x}$ is a metric and a semi-metric. On the other hand, whenever
$\alpha > 1$ or $p < 1$, $\sem(x,x^*) = b\norm{x^*-x}^\alpha_p$ is not
a metric but only a semi-metric since it does not satisfy the triangle
inequality. This assumption is much weaker than the assumption described
by Equation~\eqref{eq1} for two reasons. First, Assumption~\ref{ass1}
requires smoothness (or continuity) only at the global optima,
while Equation~\eqref{eq1} does so for any points in
the whole input domain, $\W'$.  Second, while Lipschitz continuity assumption in
Equation~\eqref{eq1} requires the smoothness to be defined by a metric,
Assumption~\ref{ass1} allows a semi-metric to be used. To the best of
our knowledge, the SOO algorithm is the only algorithm that provides a
finite-loss bound with this very weak assumption.

Summarizing the above, while the DIRECT algorithm has been successful
in practice, concern about its weak theoretical basis led to the recent
development of its generalized version, the SOO algorithm. We further
generalize the SOO algorithm to increase the practicality and strengthen
the theoretical basis at the same time. This paper adopts a very weak
assumption, Assumption~\ref{ass1}, to maintain the generality and the
wide applicability.

\section{Locally Oriented Global Optimization (LOGO)
Algorithm}
In this section, we modify the SOO algorithm \cite{mun11} to accelerate
the convergence while guaranteeing theoretical loss bounds. The new
algorithm with this modification, the LOGO (Locally Oriented Global
Optimization) algorithm, requires no additional assumption. To use the
LOGO algorithm, one needs no prior knowledge of the objective function~%
$f$; it may leverage prior knowledge if it is available. The algorithm
uses two parameters, $\hmax (n)$ and $w$, as inputs where $\hmax
(n)\in[1,\infty)$ and $w\in\Z^+$. $\hmax (n)$ and~$w$ act in part to
balance the local and global search. $\hmax (n)$ biases the search towards
a global search whereas $w$ orients the search toward the local area.

The case with $w = 1$ or $4$ (top or bottom diagrams) in Figure~\ref{fig1}
illustrates the functionality of the LOGO algorithm in a simple
$2$-dimensional objective function. In this view, the LOGO algorithm is a
generalization of the SOO algorithm with the local orientation parameter
$w$ in that SOO is a special case of LOGO with a fixed parameter
$w = 1$.

\subsection{Predecessor: SOO Algorithm}\label{sec2.1}

\begin{figure}[t]
\small
\includegraphics[width=0.92\textwidth]{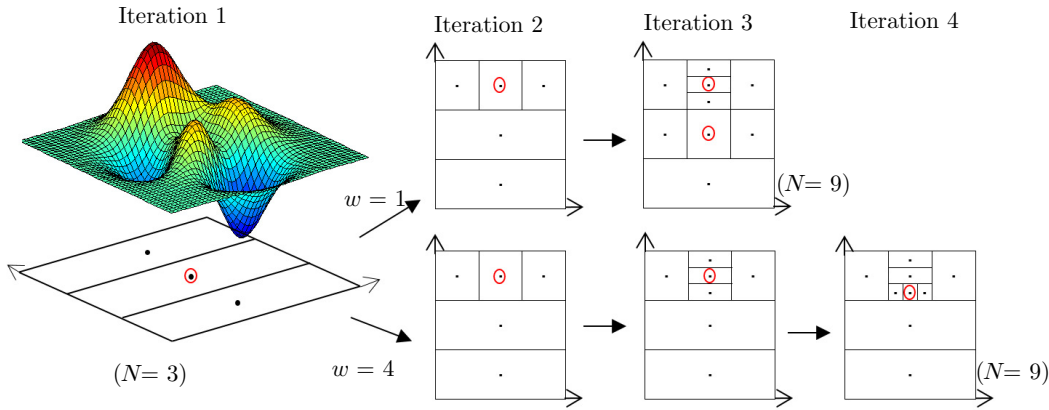}
\caption{Illustration of SOO ($w = 1$) and LOGO ($w = 1$ or $4$) at the
end of each iteration}\label{fig1}
\end{figure}

Before we discuss our algorithm in detail, we briefly describe its direct
predecessor, the SOO algorithm%
\footnote{We describe SOO with the simple division procedure that LOGO
uses. SOO itself does not specify a division procedure. }
\cite{mun11}. The top diagrams in Figure~\ref{fig1}
(the scenario with \(w=1\)) illustrates the functionality of the SOO algorithm in a simple
$2$-dimensional objective function. As illustrated in Figure~\ref{fig1},
the SOO algorithm employs hierarchical partitioning to maintain
hyperintervals, each center of which is the evaluation point of the
objective function $f$. That is, in Figure~\ref{fig1}, each rectangle
represents the hyperintervals at the end of each iteration of the
algorithm. Let $\psi_h$  be a set of rectangles of the same size that
are divided $h$ times. The algorithm uses a parameter, $\hmax
(n)$, 
to limit the size of rectangle so as to be not overly small
(and hence restrict the greediness of the search). In order to select
and refine intervals that are likely to contain a global optimizer,
the algorithm executes the following procedure:
\begin{enumerate}[(i)] 
\item\label{alg1-1} Initialize $\psi_0 = \{\Omega'\}$ and for all \(i>0\), $\psi_i = \{\varnothing\}$
\vspace{-7pt}
\item\label{alg1i} Set $h = 0$
\vspace{-7pt}
\item\label{alg1ii} Select the interval with the maximum center value among the
intervals in the set $\psi_h$
\vspace{-7pt}
\item\label{alg1iii}\label{alg1iv} If the interval selected by \eqref{alg1ii}  has a center value
greater than that of any \textit{larger} interval (i.e., intervals in  \(\psi_l\) for all \(l<h\)), divide  it and adds the new intervals to \(\psi_{h+1}\). Otherwise, reject the interval and skip this step. 
\vspace{-0pt}
\item\label{alg1v}  Set $h = h + 1$
\vspace{-7pt}
\item\label{alg1vi} Repeat \eqref{alg1ii}--\eqref{alg1v} until no smaller interval exists (i.e., until $\psi_l=\{\varnothing\}$ for all \(l\ge h\)) or $h > \hmax(n)$
\vspace{-7pt}
\item\label{alg10} Delete  all the intervals   already divided in \eqref{alg1iii} from \(\psi \) and repeat \eqref{alg1i}--\eqref{alg1vi}
\end{enumerate}

We now explain this procedure using the example in Figure~\ref{fig1}. For brevity, we use the term, ``iteration'', to  refer to the iteration  of step \eqref{alg1i}--\eqref{alg10}. In
Figure~\ref{fig1}, the center point is shown as a (black) dot in each
rectangle and each rectangle with a (red) circle around a (black) dot
is the one that was divided (into three smaller rectangles) during an
iteration. At the beginning of the first iteration, there is only one
rectangle in $\psi_0$, which is the entire search domain $\W'$. Thus,
step \eqref{alg1ii} selects this rectangle and step \eqref{alg1iv} divides it,
resulting in the leftmost diagram with $N = 3$ (the rectangle with
the center point with a red circle is the one divided during the first
iteration and the other two are created as a result). At the beginning
of the second iteration, there are three rectangles in $\psi_1$ (i.e.,
the three rectangles in the leftmost diagram with $N = 3$) but none in
$\psi_0$ (because step \eqref{alg10} in the previous iteration deleted the interval in $\psi_0$). Hence, steps \eqref{alg1ii}--\eqref{alg1iv} are not executed for $\psi_0$ and we 
begin with $\psi_1$. Step \eqref{alg1ii} selects the top rectangle from the three
rectangles because it has the maximum center point among these. Step \eqref{alg1iii}
divides it because there is no larger interval, resulting in the second
diagram on the top (labeled with $w = 1$). Iteration 2 continues by conducting steps
\eqref{alg1ii}--\eqref{alg1iv} for $\psi_2$ because there are three smaller rectangles in
$\psi_2$. Step \eqref{alg1ii} selects the center rectangle
on the top (in the second diagram on the top labeled with $w = 1$). However, step
\eqref{alg1iii} rejects it because its center value is not greater than that of
the larger rectangle in \(\psi_l\) with \(l<h=2\). There is no
smaller rectangle in \(\psi\) and iteration 2 ends. At the beginning of iteration 3,
there are two rectangles in $\psi_1$ and three rectangles in $\psi_2$
(as shown in the second diagram on the top labeled with $w = 1$). Iteration 3
begins by conducting steps \eqref{alg1ii}--\eqref{alg1iv} for $\psi_1$. Steps \eqref{alg1ii}--\eqref{alg1iii}
select and divide the top rectangle. For
rectangles in $\psi_2$, steps \eqref{alg1ii}--\eqref{alg1iii} select and divides the middle
rectangle. Here, the middle rectangle was rejected in iteration 2 because of a larger rectangle with a larger center value that existed in iteration 2. However, that larger rectangle no longer exists in iteration 3 due  to  step \eqref{alg10} in the end of iteration 2, and hence it is not rejected. The result is the third diagram (on the
top labeled with $w = 1)$. Iteration 3 continues for the newly created rectangles
in $\psi_3$. It halts, however, for the same reason as iteration 2.

\subsection{
Description of LOGO }\label{sec2.2}

 Let $\Psi_k$ be the superset that is the union of the $w$
sets as $\Psi_k=\psi_{kw}\cup\psi_{kw+1}\cup\dots\cup\psi_{kw+w-1}$ for
$k=0,1,2,\dots$. Then, similar to the SOO algorithm, the LOGO algorithm
conducts the following procedure to select and refine
the intervals that are likely to contain a global optimizer:
\begin{enumerate}[(i)]
\item\label{alg1-1} Initialize $\psi_0 = \{\Omega'\}$ and for all \(i>0\), $\psi_i = \{\varnothing\}$
\vspace{-7pt}
\item\label{alg2i} Set $k = 0$
\vspace{-7pt}
\item\label{alg2ii} Select the interval with the maximum center value among the
intervals in the superset~$\Psi_k$
\vspace{-7pt}
\item\label{alg2iii}\label{alg2iv} If the interval selected by \eqref{alg1ii}  has a center value
greater than that of any \textit{larger} interval (i.e., intervals in $\Psi_l$
with $l<k$), divide  it and adds the new intervals to \(\psi\). Otherwise, reject the interval and skip this step. 
\vspace{-7pt}
\item\label{alg2v}  Set $k = k + 1$
\vspace{-7pt}
\item\label{alg2vi} Repeat \eqref{alg2ii}--\eqref{alg2v} until no smaller interval exists (i.e.,
until $\Psi_l=\{\varnothing\}$ for all $l\ge k$) or $k>\floor{\hmax(n)/w}$.
\item\label{alg10} Delete  all the intervals   already divided in \eqref{alg1iii} from \(\psi \) and repeat \eqref{alg1i}--\eqref{alg1vi}
\end{enumerate}

When compared with the SOO algorithm, the above steps are identical except
that LOGO processes the superset $\Psi_k$ instead of set $\psi_h$. The
superset $\Psi_k$ is reduced to $\psi_h$ with $k = h$ when $w = 1$
and thus LOGO is reduced to SOO.

We now explain this procedure using the example in Figure~\ref{fig1}. With
$w = 1$, the LOGO algorithm functions in the same fashion as the
SOO algorithm. See the last paragraph in the previous section for
the explanation as to how the SOO and LOGO algorithms function in
this example. For the case with $w = 4$, the difference arises during
iteration 3 when compared to the case with $w = 1$. At the beginning
of iteration 3, there are two sets $\psi_1$ and $\psi_2$ (i.e., there
are two sizes of rectangles in the second diagram on the bottom with
$w = 4$). However, there is only one superset consisting of the two
sets $\Psi_0=\psi_0\cup\psi_{0+1}\cup\psi_{0+4-1}$. Therefore, step
\eqref{alg2ii}--\eqref{alg2iv} is conducted only for $k = 0$ and the LOGO algorithm divides
only the one rectangle with the highest center value among those in
$\Psi_0$. Consequently, the algorithm has one additional iteration
(iteration 4) using the same number of function evaluations (\(N=9\)) as the case
with $w = 1$. It can be seen that as $w$ increases, the algorithm is more
biased to the local search and, in this example, this strategy turns out
to be beneficial as the algorithm divides the rectangle near the global
optima more when $w = 4$ than when $w = 1$.

The pseudocode for the LOGO algorithm is provided in
Algorithm~\ref{alg1}. Steps \eqref{alg2i}, \eqref{alg2v}, and \eqref{alg2vi} correspond to the for-loop in lines 10--19. Steps \eqref{alg2ii}--\eqref{alg2iii} correspond to line 11 and line 12--14, respectively. We use following notation. Each hyperrectangle,
$\omega\subseteq\Omega'$, is coupled with a function value at its
center point $f(c_\omega)$, where $c$ indicates the center point of
the rectangle. As explained earlier, we use $h$ to denote the number
of divisions and the index of the set as in $\psi_h$. We define
$\omega_{h,i}$ to be the $i^\text{th}$  element of a set $\psi_h$ (i.e.,
$\omega_{h,i}\in\psi_h$). Let $x_{h,i}$ and $c_{h,i}$ be an arbitrary point
and the center point in the rectangle $\omega_{h,i}$, respectively.
We denote \(val[\omega_{h,i}]\) to indicate a stored function value of the center point in the rectangle \(\omega_{h,i}\). As it can be seen in line 14, this paper considers a simple division procedure with a rescaled domain \(\Omega'\). If we  have prior knowledge about the domain of the function, we should leverage the information. For example, we could map the original input space to another so that we can obtain a better \(\ell\) based on the theoretical results in Section 4, or we could  employ a more elaborate division procedure based on the prior knowledge. 
  
\begin{algorithm}[t]
\caption{LOGO algorithm}\label{alg1}
\begin{algorithmic}[1] 
\makeatletter \addtocounter{ALG@line}{-1} \makeatother 
\State {\bf Inputs (problem):} an objective function $f\colon x\in\R^D\to\R$,
the search domain $\W$:~$x\in\nobreak \W$
\State {\bf Inputs (parameter):} the search depth function
$\hmax\colon\Z^+\to[1,\infty)$, the local weight $w\in\Z^+$, stopping
condition
\State Define the set $\psi_h$ as a set of hyperrectangles divided $h$ times
\State Define the superset $\Psi_k$ as the union of the $w$ sets:
$\Psi_k=\psi_{kw}\cup\psi_{kw+1}\cup\dots\cup\psi_{kw+w-1}$
\State Normalize the domain $\Omega$ to $\W' = [0, 1]^D$
\State Initialize the variables:%
\begin{tabular}[t]{l}
the set of hyperrectangles: $\psi_h=\{\varnothing\}$, $h=0,1,2,\dots$,\\
the current maximum index of the set: $h_\mathit{upper}=0$\\
the number of total divisions: $n=1$
\end{tabular}
\State Adds the initial hyperrectangle $\W'$ to the set:
$\psi_0\leftarrow\psi_0\cup\{\W'\}$ (i.e., $\omega_{0,0}=\W'$)
\State Evaluate the function $f$ at the center point of $\W'$, $c_{0,0}$:
$\val[\omega_{0,0}]\leftarrow f(c_{0,0})$
\For{iteration ${}= 1, 2, 3,\dots$}
\State $\valmax\leftarrow-\infty$, $h_\mathit{plus}\leftarrow
h_\mathit{upper}$
\For{$k=0,1,2,\dots,\max(\floor{\min(\hmax(n),h_\mathit{upper})/w},h_\mathit{plus})$}
\State \Select\ a hyperrectangle to be divided:
$(h,i)\in\arg\max_{h,i}\val[\omega_{h,i}]$ for $h,i:\omega_{h,i}\in\Psi_k$
\If{$\val[\omega_{h,i}]>\valmax$}
\State $\valmax\leftarrow\val[\omega_{h,i}]$, $h_\mathit{plus}\leftarrow0$,
$h_\mathit{upper}\leftarrow\max(h_\mathit{upper},h+1)$, $n\leftarrow n+1$
\State \Divide\ this hyperrectangle $\omega_{h,i}$ along the longest
coordinate direction
\Statex\qquad\qquad\quad\begin{tabular}[t]{l}
- three smaller hyperrectangles are created $\rightarrow$
$\omega_\mathit{left}$, $\omega_\mathit{center}$, $\omega_\mathit{right}$\\
- $\val[\omega_\mathit{center}]\leftarrow\val[\omega_{h,i}]$
\end{tabular}
\State \Evaluate\ the function $f$ at the center points of the
two new hyperrectangles:
\begin{center}
$
\val[\omega_\mathit{left}]\leftarrow f(c_{\omega_\mathit{left}}),\quad
\val[\omega_\mathit{right}]\leftarrow f(c_{\omega_\mathit{right}})
$
\end{center}
\State \Group\ the new hyperrectangles into the set $h + 1$
and remove the original rectangle:
\begin{center}
$
\psi_{h+1}\leftarrow\psi_{h+1}
\cup\{\omega_\mathit{center},\omega_\mathit{left},\omega_\mathit{right}\},\quad
\psi_h\leftarrow\psi_h\setminus \omega_{h,i}
$
\end{center}
\EndIf
\State {\bf if} stopping condition is met {\bf then Return} $(h,i)=\arg\max_{h,i}\val[\omega_{h,i}]$
\EndFor
\EndFor
\end{algorithmic}
\end{algorithm}

Having discussed how the LOGO algorithm functions, we now consider the
reason why the algorithm might work well. 
The key mechanism of the DIRECT and SOO algorithms is to divide all the
hyperintervals with potentially highest upper bounds w.r.t.\ unknown
smoothness at each iteration. The idea behind the LOGO algorithm is to
reduce the number of divisions per iteration by biasing the search toward
the local area with the concept of the supersets. Intuitively, this can be beneficial for two
reasons. First, by reducing the number of divisions per iteration, more
information can be utilized when selecting intervals to divide. For
example, one may simultaneously divide five or ten intervals per
iteration. In the former, when selecting the sixth to the tenth interval
to divide, one can leverage information gathered by the previous five
divisions (evaluations), whereas the latter makes it impossible. Because
the selection of intervals depends on the information, which in turn
provides the new information to the next selection, the minor difference
in availability of the information may make the two sequences of the
search very different in the long run. Second, by biasing the search
toward the local area, the algorithm likely converges faster in a certain
type of problem. In many practical problems, we do not aim to find a
position of global optima, but a position with a function value close to
global optima. In that case, the local bias is likely beneficial unless
there are too many local optima, the value of which is far from that
of global optima. Even though our local bias strategy is motivated to
solve the problem of the impractically slow convergence rate of most
global optimization methods, the algorithm maintains guaranteed loss
bounds w.r.t.\ global optima, as discussed below.

\section{Theoretical Results: Finite-Time Loss Analysis}\label{sec2.3}

We first derive the loss bounds for the LOGO algorithm when it uses any
division strategy that satisfies certain assumptions. Then, we provide
the loss bounds for the algorithm with the concrete division strategy
provided in Algorithm~\ref{alg1} and with the parameter values that
we use in the rest of this paper. The motivation in the first part is
to extend the existing framework of the theoretical analysis and thus
produce the basis for future work. The second part is to prove that
the LOGO algorithm maintains finite-time loss bounds for the parameter
settings that we actually use in the experiments.

\subsection{Analysis for General Division Method}\label{sec2.3.1}

In this section, we generalize the result obtained by \citeA{mun13}
in that the previous result is now seen as a special case of the new
result when $w = 1$. The previous work provided the loss bound of the
SOO algorithm with any division process that satisfied the following
two assumptions, which we adopt in this section.

\begin{assA}[Decreasing diameter]\label{assA1}
There exists a function $\delta(h)>0$ such that for
any hyperinterval $\omega_{h,i}\subseteq\W'$, we have
$\delta(h)\ge\sup_{x_{h,i}}\sem(x_{h,i},c_{h,i})$, while
$\delta(h-1)\ge\delta(h)$ holds for $h\ge 1$.
\end{assA}

\begin{assA}[Well-shaped cell]\label{assA2}
There exists a constant $\nu > 0$ such that any hyperinterval
$\omega_{h,i}$ contains a $\sem$-ball of radius $\nu\delta(h)$ centered in
$\omega_{h,i}$.
\end{assA}

Intuitively, Assumption~\ref{assA1} states that the unknown local
smoothness $\sem$ is upper-bounded by a monotonically decreasing function
of $h$. This assumption ensures that each division does not increase
the upper bound, $\delta(h)$. Assumption~\ref{assA2} ensures that every
interval covers at least a certain amount of space in order to relate
the number of intervals to the unknown smoothness $\sem$ (because $\sem$
is defined in terms of space). To present our analysis, we need to define
the relevant terms and variables. We define $\ops$-optimal set $X_\ops$ as
\[
X_\ops:=\{x\in\W':f(x)+\ops\ge f(x^*)\}.
\]
That is, the set of $\ops$-optimal set $X_\ops$ is the set of input
vectors whose function value is at least $\ops$-close to the value of the
global optima. In order to bound the number of hyperintervals relevant
to the $\ops$-optimal set $X_\ops$, we define \emph{near-optimality
dimension} as follows.

\begin{definition}[Near-optimality dimension]\label{def1}
The near-optimality dimension is the smallest $d\ge 0$ such that there
exists $C > 0$, for all $\ops > 0$, the maximum number of disjoint
$\sem$-balls of radius $\nu\ops$ centered in the $\ops$-optimal set $X_\ops$
is less than or equal to $C\ops^{-d}$.
\end{definition}

The \emph{near-optimality dimension} was introduced by \citeA{mun11} and
is closely related to a previous measure used by \citeA{kle08}. The value
of the near-optimality dimension $d$ depends on the objective function
$f$, the semi-metric $\sem$ and the division strategy (i.e., the constant
$\nu$ in Assumption~\ref{assA2}). If we consider a semi-metric~$\sem$
that satisfies Assumptions~\ref{ass1}, \ref{assA1}, and \ref{assA2},
then the value of $d$ depends only on such a semi-metric~$\sem$ and
the division strategy. In Theorem~\ref{the2}, we show that the division
strategy of the LOGO algorithm can let $d = 0$ for a general class of
semi-metric $\sem$.

Now that we have defined the relevant terms and variables used in previous
work, we introduce new concepts to advance our analysis. First, we define
the set of $\delta$-optimal hyperinterval $\psi_h(w)^*$ as
\[
\psi_h(w)^*:=\{\omega_{h,i}\subseteq \W': f(c_{\omega_{h,i}}) +
\delta(h-w+1)\ge f(x^*)\}.
\]
The $\delta$-optimal hyperinterval $\psi_h(w)^*$ is used to relate
the hyperintervals to  $\ops$-optimal set $X_\ops$. Indeed, the
$\delta$-optimal hyperinterval $\psi_h(w)^*$ is almost identical to
the $\delta(h-w+1)$-optimal set $X_{\delta(h-w+1)}$ ($\ops$-optimal set
$X_\ops$ with $\ops$ being $\delta(h-w+1)$), except that $\psi_h(w)^*$
only considers the hyperintervals and the values of their center points
while $X_{\delta(h-w+1)}$ is about the whole input vector space. In order
to relate $\psi_h(w)^*$ to $\psi_h(1)^*$, we define \emph{$\sem$-ball
ratio} as follows.

\begin{definition}[$\sem$-ball ratio]\label{def2}
For every $h$ and $w$, the $\sem$-ball ratio is the smallest
$\lambda_h(w)>0$ such that the volume of a $\sem$-ball of radius
$\delta(h-w+1)$ is no more than the volume of $\lambda_h(w)$ disjoint
$\sem$-balls of radius $\delta(h)$.
\end{definition}

In the following lemma, we bound the maximum cardinality of
$\psi_h(w)^*$. We use $\abs{\psi_h(w)^*}$ to denote the cardinality.

\begin{lemma}\label{lem1}
Let $d$ be the near-optimality dimension and $C$ denote the corresponding
constant in Definition~\ref{def1}. Let $\lambda_h(w)$ be the $\sem$-ball
ratio in Definition~\ref{def2}. Then, the $\delta$-optimal hyperinterval
is bounded as
\[
\abs{\psi_h(w)^*}\le C\lambda_h(w)\delta(h-w+1)^{-d}.
\]
\end{lemma}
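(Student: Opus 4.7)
The plan is to bound $N:=\abs{\psi_h(w)^*}$ by converting a count of $\delta$-optimal hyperintervals into a count of disjoint $\sem$-balls centered in the $\delta(h-w+1)$-optimal set $X_{\delta(h-w+1)}$, which Definition~\ref{def1} controls directly. The multiplicative factor $\lambda_h(w)$ from Definition~\ref{def2} appears when passing from the small $\sem$-balls (radius $\nu\delta(h)$) extracted from each interval to the large ones (radius $\nu\delta(h-w+1)$) demanded by the near-optimality dimension. The sanity check is the case $w=1$: then $\lambda_h(1)=1$ and the bound collapses to the Munos~(2013) statement $\abs{\psi_h(1)^*}\le C\delta(h)^{-d}$.

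First I would unpack the definition of $\psi_h(w)^*$ to note that for every $\omega_{h,i}\in\psi_h(w)^*$, the center satisfies $f(c_{h,i})+\delta(h-w+1)\ge f(x^*)$, hence $c_{h,i}\in X_{\delta(h-w+1)}$. Then, applying Assumption~\ref{assA2} at level~$h$, I extract from each such $\omega_{h,i}$ an $\sem$-ball $B_i$ of radius $\nu\delta(h)$ centered inside $\omega_{h,i}$. Because hyperintervals at level~$h$ are pairwise disjoint by construction of the partition, the balls $\{B_i\}_{i=1}^{N}$ are pairwise disjoint too.

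Next I would introduce a maximal subcollection of large balls $\{B(c_{h,i},\nu\delta(h-w+1))\}_{i\in I'}$ that are pairwise disjoint. These are disjoint $\sem$-balls of radius $\nu\delta(h-w+1)$ centered in $X_{\delta(h-w+1)}$, so Definition~\ref{def1} applied with $\ops=\delta(h-w+1)$ yields $\abs{I'}\le C\delta(h-w+1)^{-d}$. It remains to charge every index $j\in\{1,\dots,N\}$ to some representative $i\in I'$ and to bound the number of indices charged to a single $i$ by $\lambda_h(w)$; combining these gives $N\le\lambda_h(w)\abs{I'}\le C\lambda_h(w)\delta(h-w+1)^{-d}$, which is the claim.

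The hard part will be the charging step, since $\sem$ is only a semi-metric and does not obey the triangle inequality, so one cannot conclude $B_j\subseteq B(c_{h,i},\nu\delta(h-w+1))$ from distance manipulations. My plan is to argue purely by volume, exactly as Definition~\ref{def2} is stated: for each representative $i\in I'$, the indices $j$ assigned to $i$ contribute disjoint small balls of radius $\nu\delta(h)$ sitting inside the region occupied by the single large ball of radius $\nu\delta(h-w+1)$ around $c_{h,i}$; by the very definition of $\lambda_h(w)$ as the smallest volume ratio, at most $\lambda_h(w)$ such disjoint small balls can be accommodated. The delicate book-keeping is to set up the assignment $j\mapsto i(j)$ from the maximality of $I'$ in such a way that every $j$ is covered exactly once, so that summing over $i\in I'$ recovers $N$ on the left-hand side. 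Once that assignment is fixed, the bound is immediate from the two inequalities above.
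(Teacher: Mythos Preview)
Your setup agrees with the paper: each $\omega_{h,i}\in\psi_h(w)^*$ has $c_{h,i}\in X_{\delta(h-w+1)}$, and Assumption~\ref{assA2} gives $N$ pairwise disjoint $\sem$-balls of radius $\nu\delta(h)$, one inside each such interval. The divergence is in how you pass from these small balls to the count controlled by Definition~\ref{def1}.

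The gap is precisely the step you flag as ``delicate book-keeping,'' but it is not book-keeping. Maximality of $I'$ tells you only that for each $j\notin I'$ the \emph{large} ball $B(c_{h,j},\nu\delta(h-w+1))$ meets $B(c_{h,i},\nu\delta(h-w+1))$ for some $i\in I'$. With $\sem$ merely a semi-metric, this does \emph{not} imply that the small ball $B_j$ (which is centred somewhere inside $\omega_{h,j}$, not even at $c_{h,j}$) lies inside $B(c_{h,i},\nu\delta(h-w+1))$. So the assertion that the small balls assigned to $i$ are ``sitting inside the region occupied by the single large ball'' is unjustified, and without it Definition~\ref{def2} says nothing about how many $j$'s a single representative can absorb. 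Setting up a well-defined assignment $j\mapsto i(j)$ is easy; what is missing is any reason the per-representative load is at most $\lambda_h(w)$.

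The paper does not attempt a charging argument at all. It runs the volume comparison in the opposite direction: Definition~\ref{def1} caps the number of disjoint large balls centred in $X_{\delta(h-w+1)}$ at $C\delta(h-w+1)^{-d}$; the paper then argues that the region swept out by such a maximal family is a superset of $X_{\delta(h-w+1)}$, and since by Definition~\ref{def2} that region has volume at most $C\lambda_h(w)\delta(h-w+1)^{-d}$ times the volume of one small ball, no more than $C\lambda_h(w)\delta(h-w+1)^{-d}$ disjoint small balls with centres in $X_{\delta(h-w+1)}$ can exist. Assumption~\ref{assA2} then identifies $\abs{\psi_h(w)^*}$ with such a count. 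Here $\lambda_h(w)$ enters once, globally, as a volume ratio, so the paper never needs to place any particular $B_j$ inside any particular large ball. If you want to salvage your route, you would need either a containment argument that survives the lack of a triangle inequality or to abandon the per-representative bound and revert to a single global volume comparison as the paper does.
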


\begin{proof}
The proof follows the definition of $\ops$-optimal set
$X_\ops$, Definition~\ref{def1}, Definition~\ref{def2}, and
Assumption \ref{assA2}. From the definition of $\ops$-optimal space
$X_\ops$, we can write $\delta(h-w+1)$-optimal set as
\[
X_{\delta(h-w+1)}=\{x\in\W':f(x)+\delta(h-w+1)\ge f(x^*)\}.
\]
The definition of the near-optimality dimension (Definition~\ref{def1})
implies that at most $C\delta(h-w+1)^{-d}$ centers of disjoint
$\sem$-balls of radius $\nu\delta(h-w+1)$ exist within space
$X_{\delta(h-w+1)}$. Then, from the definition of the $\sem$-ball
ratio (Definition~\ref{def2}), the space of $C\delta(h-w+1)^{-d}$
disjoint $\sem$-balls of radius $\nu\delta(h-w+1)$ is covered by
at most $C\lambda_h(w)\delta(h-w+1)^{-d}$ disjoint $\sem$-balls
of radius $\nu\delta(h)$. Notice that the set of space covered by
$C\delta(h-w+1)^{-d}$ disjoint $\sem$-balls of radius $\nu\delta(h-w+1)$
is a superset of $X_{\delta(h-w+1)}$. Therefore, we can deduce that
there are at most $C\lambda_h(w)\delta(h-w+1)^{-d}$ centers of disjoint
$\sem$-balls of radius $\nu\delta(h)$ within $X_{\delta(h-w+1)}$. Now,
recall the definition of the $h$-$w$-optimal interval,
\[
\psi_h(w)^*:=\{\omega_{h,i}\subseteq\W':f(c_{\omega_{h,i}}) +
\delta(h-w+1)\ge f(x^*)\}
\]
and notice that the number of intervals is equal to the
number of centers $c_{\omega_{h,i}}$ that satisfy the condition
$f(c_{\omega_{h,i}})+\delta(h-w+1)\ge f(x^*)$. Assumption \ref{assA2}
causes this number to be equivalent to the number of centers of disjoint
$\sem$-balls with radius $\nu\delta(h)$, which we showed to be upper bounded
by $C\lambda_h(w)\delta(h-w+1)^{-d}$.
\end{proof}

Next, we bound the maximum size of \emph{the optimal hyperinterval},
which contains a global optimizer $x^*$. In the following analysis,
we use the concept of the set and superset of hyperintervals. Recall
that set $\psi_h$ contains all the hyperintervals that have been divided
$h$ times thus far, and superset $\Psi_k$  is a union of the $w$ sets,
given as $\Psi_k=\psi_{kw}\cup\psi_{kw+1}\cup\dots\cup\psi_{kw+w-1}$
for $k=0,1,2,\dots$. We say that a hyperinterval is \emph{dominated}
by other intervals when the hyperinterval is not divided because its
center value is at most that of other hyperintervals in any set.

\begin{lemma}\label{lem2}
Let $k_n^*$ be the highest integer such that the optimal hyperinterval,
which contains a global optimizer $x^*$, belongs to the superset
$\Psi_{k_n^*}$ after $n$ total divisions (i.e., $k_n^*\le n$ determines
the size of the optimal hyperinterval, and hence the loss of the
algorithm). Then, $k_n^*$ is lower bounded as $k_n^*\ge K$ with any $K$
that satisfies $0\le K\le\floor{\hmax(n)/w}$ and
\[
n\ge
\bbfloor{\frac{\hmax(n)+w}{w}}\sum_{k=0}^K\biggl(\abs{\psi_{kw}(1)^*}+\sum_{l=1}^{w-1}\abs{\psi_{kw+l}(l+1)^*}\biggr).
\]
\end{lemma}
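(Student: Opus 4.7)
The plan is to argue the contrapositive: if $k_n^*<K$ then $n<\floor{(\hmax(n)+w)/w}\sum_{k=0}^K B_k$, where I abbreviate $B_k:=\abs{\psi_{kw}(1)^*}+\sum_{l=1}^{w-1}\abs{\psi_{kw+l}(l+1)^*}$ and write $\Psi_k^*$ for the corresponding union of hyperintervals. A crucial small observation is that for $h=kw+l$ with $0\le l\le w-1$, the threshold $f(x^*)-\delta(h-l)$ defining $\psi_h(l+1)^*$ collapses to the level-independent value $f(x^*)-\delta(kw)$, so $\Psi_k^*$ is just the set of hyperintervals at levels $\{kw,\ldots,kw+w-1\}$ whose center value is at least $f(x^*)-\delta(kw)$. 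The argument then needs two ingredients: a per-iteration cap on divisions and a bound on the number of iterations for which $k^*<K$.

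The first ingredient is immediate from Algorithm~\ref{alg1}: the inner for-loop ranges over $k=0,\ldots,\floor{\hmax(n)/w}$ at most, and each $k$ triggers at most one \Divide\ call, so each outer iteration performs at most $\floor{\hmax(n)/w}+1=\floor{(\hmax(n)+w)/w}$ divisions. For the second, I claim that in every iteration in which $\omega^*$ sits in $\Psi_{k^*}$ with $k^*<K$, at least one \Divide\ call targets an interval in $\bigcup_{k=0}^{k^*}\Psi_k^*\subseteq\bigcup_{k=0}^{K-1}\Psi_k^*$. Combining Assumption~\ref{ass1} with Assumption~\ref{assA1}, whenever $\omega^*$ is at level $h\in[k^*w,k^*w+w-1]$ its center value $v^*$ satisfies $v^*\ge f(x^*)-\delta(h)\ge f(x^*)-\delta(k^*w)$, so $\omega^*\in\Psi_{k^*}^*$. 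Inspect the inner-loop step at $k=k^*$: the selected argmax $\omega_s$ in $\Psi_{k^*}$ has $v_s\ge v^*\ge f(x^*)-\delta(k^*w)$, so $\omega_s\in\Psi_{k^*}^*$. If $\omega_s$ is divided the claim holds. If $\omega_s$ is rejected, then $\valmax$ at that instant exceeds $v_s$; since $\valmax$ only grows through successful divisions earlier in the same iteration, some step $k'<k^*$ divided an interval $\omega'$ with value at least $v_s\ge f(x^*)-\delta(k^*w)\ge f(x^*)-\delta(k'w)$ (using $k'<k^*$ with $\delta$ decreasing), so $\omega'\in\Psi_{k'}^*$.

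Since each hyperinterval in the full partition tree is divided at most once and the sets $\Psi_k^*$ live on disjoint ranges of levels, the iterations with $k^*<K$---which, by monotonicity of $k^*$ in time, comprise every iteration whenever $k_n^*<K$---map injectively into $\bigcup_{k=0}^{K-1}\Psi_k^*$ (pick any one counted division per iteration), so their total count is at most $\sum_{k=0}^{K-1}B_k\le\sum_{k=0}^K B_k$. Multiplying by the per-iteration cap gives the required bound on $n$. The main delicacy sits in the rejection branch: one must carefully unpack how $\valmax$ is maintained inside a single iteration and apply the decreasing property of $\delta$ (between $k'w$ and $k^*w$) to certify that the offending earlier division $\omega'$ really does belong to a $\Psi_{k'}^*$ that is being counted, rather than escaping into something outside $\bigcup_{k=0}^{K-1}\Psi_k^*$.
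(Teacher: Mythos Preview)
Your proof is correct and rests on the same two observations as the paper's: the per-iteration cap of $\lfloor h_{\max}(n)/w\rfloor+1$ divisions, and the fact that whenever the optimal interval sits in $\Psi_{k^*}$ with $k^*<K$, the iteration must divide some interval belonging to $\Psi_{k'}^*$ for a $k'\le k^*$ (either the argmax in $\Psi_{k^*}$ itself, or the earlier dominating interval that set $\valmax$). The organization differs slightly: the paper introduces the hitting times $\tau(\Psi_k)$, bounds the increments $\tau(\Psi_k)-\tau(\Psi_{k-1})$ epoch by epoch (carrying an extra ``dominating'' term $\sum_{j<k}\sum_l|\psi_{jw+l}(1)^*|$), and then absorbs that term telescopically; you instead count iterations directly and charge each one injectively to an element of $\bigcup_{k\le K-1}\Psi_k^*$, which sidesteps the telescoping bookkeeping. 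Both routes arrive at the same bound, and your packaging is arguably cleaner; the only small point you leave implicit is that the inner loop actually reaches index $k^*$, which follows from $k^*<K\le\lfloor h_{\max}(n)/w\rfloor$ together with $h_{\mathit{upper}}\ge k^*w$ (since $\omega^*$ already lives at that depth).
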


\begin{proof}
Let $\tau(\Psi_k)$ be the number of divisions, with which the algorithm
further divides the optimal hyperinterval in superset $\Psi_k$ and
places it into $\Psi_{k+1}$. In the example in Figure~\ref{fig1}
with $w = 1$, the optimal hyperinterval is initially the whole
domain $\W'\subseteq\Psi_0$. It is divided with the first division
and the optimal hyperinterval is placed into $\Psi_1$. Therefore,
$\tau(\Psi_0)=1$. Similarly, $\tau(\Psi_1)=2$. A division of non-optimal
interval occurs before that of the optimal one for $\tau(\Psi_2)$ and
hence $\tau(\Psi_2)=4$.  In other words, $\tau(\Psi_k)$ is the time when
the optimal hyperinterval in superset $\Psi_k$ is further divided
and escapes the superset $\Psi_k$, entering into $\Psi_{k+1}$.
Let $c_{kw+l,i^*}$ be the center point of the optimal hyperinterval in
a set $\psi_{kw+l}\subseteq\Psi_k$.

We prove the statement by showing that the quantity
$\tau(\Psi_k)-\tau(\Psi_{k-1})$ is bounded by the number of
$\delta$-optimal hyperintervals $\psi_h(w)^*$. To do so, let us
consider the possible hyperintervals to be divided during the time
$[\tau(\Psi_{k-1}),\tau(\Psi_k)-1]$. For the hyperintervals in the set
$\psi_{kw}$, the ones that can possibly further be divided during this
time must satisfy $f(c_{kw,i})\ge f(c_{kw,i^*})\ge f(x^*) -\delta(kw)$. The
first inequality is due to the fact that the algorithm does not divide an
interval that has center value less than the maximum center value of an
existing interval for each set, and there exists $f(c_{kw,i^*})$ during
the time $[\tau(\Psi_{k-1}),\tau(\Psi_k)-1]$. The second inequality follows
Assumption~\ref{ass1} and the definition of the optimal interval. Then,
from the definition of $\psi_h(w)^*$, the hyperintervals that can possibly
be divided during this time belong to $\psi_{kw}(1)^*\subseteq\Psi_k$.

In addition to set $\psi_{kw}$, in superset $\Psi_k$, there are
sets $\psi_{kw+l}$ with $l\colon w - 1 \ge l \ge 1$. For these sets,
we have $f(c_{kw+l,i})\ge f(c_{lw+l,i^*})\ge f(x^*)-\delta(kw)$
with similar deductions. Here, notice that during the time
$[\tau(\Psi_{k-1}),\tau(\Psi_k)-1]$, we can be sure that the center
value in the superset is lower bounded by $f(c_{kw,i^*})$ instead of
$f(c_{kw+l,i^*})$. In addition, we have $\delta(kw)=\delta(kw+l-l)$. Thus,
we can conclude that the hyperintervals in set $\psi_{kw+l}$ that
can be divided during time $[\tau(\Psi_{k-1}),\tau(\Psi_k)-1]$ belongs to
$\psi_{kw+l}(l+1)^*$ where $(w - 1)\ge l\ge 1$.

No hyperinterval in superset $k$ may be divided at iteration since the
intervals can be dominated by those in other supersets. In this case,
we have $f(c_{jw+l,i})\ge f(c_{kw,i^*})\ge f(x^*)-\delta(kw)$ for
some $j < k$ and $l\ge 0$. With similar deductions, it is easy to see
$f(x^*)-\delta(kw)\ge f(x^*)-\delta(jw+l)$. Thus, the hyperintervals
in a superset $\Psi_j$ with $j < k$ that can dominate those in
superset $\Psi_k$ during $[\tau(\Psi_{k-1}),\tau(\Psi_k)-1]$ belongs to
$\psi_{jw+l}(1)^*$.

Putting the above results together and noting that the algorithm
divides at most $\floor{\hmax(n)/w} + 1$ intervals during any iteration
($h_\mathit{plus}$ plays its role only when the algorithm divides at
most one interval), we have
\[
\tau(\Psi_k)-\tau(\Psi_{k-1}) \le \biggl(\bbfloor{\frac{\hmax(n)}{w}}+1\biggr)
\biggl(\abs{\psi_{kw}(1)^*}+\sum_{l=1}^{w-1}\abs{\psi_{kw+l}(l+1)^*}+\sum_{j=1}^{k-1}\sum_{l=0}^{w-1}\abs{\psi_{jw+l}(1)^*}\biggr).
\]
Then,
\[
\sum_{k=1}^{k_n^*}\tau(\Psi_k)-\tau(\Psi_{k-1}) \le
\bbfloor{\frac{\hmax(n)+w}{w}}
\sum_{k=1}^{k_n^*}\biggl(\abs{\psi_{kw}(1)^*}+\sum_{l=1}^{w-1}\abs{\psi_{kw+l}(l+1)^*}\biggr)
\]
since the last term for a superset $\Psi_j$ with $j\le k-1$ in
the previous inequality contains only the optimal intervals that
are subsets of the optimal intervals covered by the new summation
$\sum_{k=1}^{k_n^*}$.

If $k_n^*\ge\floor{\hmax(n)/w}$, then the statement always holds true for
any $0\,{\le}\, K\,{\le}\, \floor{\hmax(n)/w}$ since $k_n^*\ge\floor{\hmax(n)/w}\ge
K$. Accordingly, we assume $k_n^*<\floor{\hmax(n)/w}$ in the
following. Since $\tau(\Psi_0)$ is upper bounded by the term in the previous
summation on the right hand of the above inequality with $k = 0$,
\[
\tau(\Psi_{k_n^*+1})\le \bbfloor{\frac{\hmax(n)+w}{w}}
\sum_{k=0}^{k_n^*+1}\biggl(\abs{\psi_{kw}(1)^*}+\sum_{l=1}^{w-1}\abs{\psi_{kw+l}(l+1)^*}\biggr).
\]
By the definition of $k_n^*$, we have $n<\tau(\Psi_{k_n^*+1})$. Therefore,
for any $K\le\floor{\hmax(n)/w}$ such that
\begin{multline*}
\bbfloor{\frac{\hmax(n)+w}{w}}\sum_{k=0}^{K}\biggl(\abs{\psi_{kw}(1)^*}+\sum_{l=1}^{w-1}\abs{\psi_{kw+l}(l+1)^*}\biggr)\\
\le n <
\bbfloor{\frac{\hmax(n)+w}{w}}\sum_{k=0}^{k_n^*+1}\biggl(\abs{\psi_{kw}(1)^*}+\sum_{l=1}^{w-1}\abs{\psi_{kw+l}(l+1)^*}\biggr),
\end{multline*}
we have $k_n^*\ge K$.
\end{proof}

With Lemmas~\ref{lem1} and \ref{lem2}, we can now present the main
result in this section that provides the finite-time loss bound of the
LOGO algorithm.

\begin{theorem}\label{the1}
Let $\sem$ be a semi-metric such that Assumptions~\ref{ass1}, \ref{assA1},
and \ref{assA2} are satisfied. Let $h(n)$ be the smallest integer $h$
such that
\[
n\le
C\bbfloor{\frac{\hmax(n)+w}{w}}\sum_{k=0}^{\floor{h/w}}\biggl(\delta(kw)^{-d}+\sum_{l=1}^{w-1}\lambda_{kw+l}(l+1)\delta(kw)^{-d}\biggr).
\]
Then, the loss of the LOGO algorithm is bounded as
\[
r_n\le\delta\bigl(\min(w\floor{h(n)/w}-w,w\floor{\hmax(n)/w})\bigr).
\]
\end{theorem}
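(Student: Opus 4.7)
The plan is to feed the cardinality bound from Lemma~\ref{lem1} into the sufficient condition appearing in Lemma~\ref{lem2}, choose $K$ so that this condition is met, and then translate the resulting lower bound on $k_n^*$ into a loss bound via Assumption~\ref{ass1}.

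For the first step, I would apply Lemma~\ref{lem1} to each cardinality in the hypothesis of Lemma~\ref{lem2}. Taking $h = kw$ and $w = 1$ in Lemma~\ref{lem1} gives $\abs{\psi_{kw}(1)^*} \le C\lambda_{kw}(1)\delta(kw)^{-d}$, and $\lambda_{kw}(1)=1$ because a single $\sem$-ball of radius $\delta(kw)$ trivially covers itself (Definition~\ref{def2}). Taking $h = kw+l$ in Lemma~\ref{lem1} gives $\abs{\psi_{kw+l}(l+1)^*}\le C\lambda_{kw+l}(l+1)\delta(kw)^{-d}$ for $1\le l\le w-1$, where $\delta(h-w+1)=\delta(kw)$ because the second argument of $\psi_{kw+l}(\cdot)$ is $l+1$. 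Substituting these two bounds into the condition of Lemma~\ref{lem2} produces exactly the expression that defines $h(n)$ in the theorem statement.

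Next, I would set $K = \min(\floor{h(n)/w}-1,\floor{\hmax(n)/w})$ and verify that the Lemma~\ref{lem2} inequality is satisfied at this $K$. Since $h(n)$ is the smallest integer for which the defining inequality holds, at $h(n)-1$ that expression is strictly less than $n$. The sum in the Lemma~\ref{lem2} condition is nondecreasing in its upper index, and $\floor{h(n)/w}-1\le\floor{(h(n)-1)/w}$, so the choice $K=\floor{h(n)/w}-1$ makes the Lemma~\ref{lem2} sum at most its value at $h(n)-1$, hence at most $n$. Truncating by $\floor{\hmax(n)/w}$ only further shrinks the sum while enforcing the admissibility $K\le\floor{\hmax(n)/w}$. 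Lemma~\ref{lem2} then yields $k_n^*\ge K$.

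Finally, I would read off the loss bound. Because the optimal hyperinterval lies in $\Psi_{k_n^*}=\psi_{k_n^* w}\cup\cdots\cup\psi_{k_n^* w+w-1}$, it has been divided at least $k_n^* w$ times, and by Assumptions~\ref{ass1} and~\ref{assA1} its center $c^*$ satisfies $f(x^*)-f(c^*)\le\sem(c^*,x^*)\le\delta(k_n^* w)$. Since $x^+(n)$ is the argmax of all evaluated center values, $f(x^+(n))\ge f(c^*)$, and monotonicity of $\delta$ combined with $k_n^*\ge K$ gives $r_n\le\delta(k_n^* w)\le\delta(Kw)=\delta(\min(w\floor{h(n)/w}-w,w\floor{\hmax(n)/w}))$, as claimed. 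The main obstacle is the index bookkeeping: the ``$-w$'' in the final bound arises because $h(n)$ is defined by a non-strict inequality, forcing one to invoke Lemma~\ref{lem2} at $K=\floor{h(n)/w}-1$ rather than at $\floor{h(n)/w}$, while also correctly handling the cap at $\floor{\hmax(n)/w}$ and the simplification $\lambda_h(1)=1$; beyond this, the argument is essentially substitution.
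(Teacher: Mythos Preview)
Your proposal is correct and follows essentially the same route as the paper: substitute the Lemma~\ref{lem1} bounds into the sufficient condition of Lemma~\ref{lem2}, set $K=\floor{h(n)/w}-1$ (capped at $\floor{\hmax(n)/w}$), and then translate $k_n^*\ge K$ into a bound on $r_n$ via Assumptions~\ref{ass1} and~\ref{assA1}. The only cosmetic difference is that the paper splits into the two cases $K<\floor{\hmax(n)/w}$ and $K\ge\floor{\hmax(n)/w}$ explicitly rather than taking the $\min$ up front, and it does not spell out $\lambda_h(1)=1$ or the inequality $\floor{h(n)/w}-1\le\floor{(h(n)-1)/w}$ as carefully as you do.
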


\begin{proof}
From Lemma~\ref{lem1} and the definition of $h(n)$,
\begin{align*}
n&>C\bbfloor{\frac{\hmax(n)+w}{w}}\sum_{k=0}^{\floor{h(n)/w}-1}\biggl(\delta(kw)^{-d}
+\sum_{l=1}^{w-1}\lambda_{kw+l}(l+1)\delta(kw+l-l)^{-d}\biggr)\\
&\ge
\bbfloor{\frac{\hmax(n)+w}{w}}\sum_{k=0}^{\floor{h(n)/w}-1}\biggl(\abs{\psi_{kw}(1)^*}+\sum_{l=1}^{w-1}\abs{\psi_{kw+l}(l+1)^*}\biggr).
\end{align*}
Therefore, we set $K$ as $K=\floor{h(n)/w}-1$ in the following to apply
the result of Lemma~\ref{lem2}. Then, it follows that $k_n^*\ge K(n)$ when
$K<\floor{\hmax(n)/w}$. Here, the number of divisions that an interval
in the superset $\Psi_K$ is at least $Kw = w\floor{h(n)/w}-w$. Therefore,
from Assumptions~\ref{ass1}, \ref{assA1}, and \ref{assA2}, we can deduce
that $r_n\le\delta(w\floor{h(n)/w}-w)$.

When $K\ge\floor{\hmax(n)/w}$, we have $\floor{(h_\mathit{upper})/w}\ge
k_n^*\ge\floor{\hmax(n)/w}$. Thus, in this case, we have $k_n^*$ being
equal to at least $\floor{\hmax(n)/w}$. From Assumptions~\ref{ass1},
\ref{assA1}, and \ref{assA2}, we can similarly deduce that
$r_n\le\delta(w\floor{\hmax(n)/w})$.
\end{proof}

The loss bound stated by Theorem~\ref{the1} applies to the LOGO algorithm
with any division strategy that satisfies Assumptions~\ref{assA1} and
\ref{assA2}. We add the following assumption about the division process
to derive more concrete forms of the loss bound.

\begin{assA}[Decreasing diameter revisit]\label{assA3}
The decreasing diameter defined in Assumption~\ref{ass1} can be
written as $\delta(h) = c\gb^{h/D}$ for some $c > 0$ and  $\gb
< 1$, and accordingly the corresponding $\sem$-ball ratio is
$\lambda_h(w)=(\delta(h-w+1)/\delta(h))^D$.
\end{assA}

Assumption~\ref{assA3} is similar to an assumption made by \citeA{mun13},
which is that $\delta(h) =\nobreak c\gb^h$. In contrast to the
previous assumption, our assumption explicitly reflects the fact that the size
of a hyperinterval decreases at a slower rate for higher dimensional
problems.  For the LOGO
algorithm, the validity of Assumptions~\ref{assA1}, \ref{assA2}, and
\ref{assA3} is confirmed in the next section.

We now present the finite-loss bound for the LOGO algorithm in the case
of the general division strategy with the above additional assumption
and with $d = 0$.

\begin{corollary}\label{cor1}
Let $\sem$ be a semi-metric such that Assumptions~\ref{ass1}, \ref{assA1},
\ref{assA2}, and \ref{assA3} are satisfied. If the near-optimality
dimension $d = 0$ and $\hmax(n)$ is set to $\sqrt{n}-w$, then the loss
of the LOGO algorithm is bounded for all $n$ as
\[
r_n\le
c\exp\Biggl(-\min\biggl(\sqrt{n}\frac{w}{C}\biggl(\frac{\gb^{-w}-1}{\gb^{-1}-1}\biggr)^{\!-1}-2,
\sqrt{n}-w\biggr)\frac{w}{D}\ln\frac1\gb\Biggr).
\]
\end{corollary}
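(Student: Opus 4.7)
My plan is to invoke Theorem~\ref{the1} and plug in the three simplifications supplied by the hypotheses: $d=0$ collapses every $\delta(kw)^{-d}$ to $1$; Assumption~\ref{assA3} gives a closed form for $\lambda_h(w)$; and $\hmax(n)=\sqrt{n}-w$ fixes the depth factor. The main work is then a geometric-series calculation followed by inverting the resulting inequality to isolate $h(n)$.

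First I would compute the $\sem$-ball ratio explicitly. Under Assumption~\ref{assA3}, $\delta(h)=c\gb^{h/D}$, so
\[
\lambda_{kw+l}(l+1)=\left(\frac{\delta(kw+l-l)}{\delta(kw+l)}\right)^{\!D}=\left(\frac{\gb^{kw/D}}{\gb^{(kw+l)/D}}\right)^{\!D}=\gb^{-l}.
\]
Substituting into Theorem~\ref{the1} with $d=0$, each term in the outer sum becomes $1+\sum_{l=1}^{w-1}\gb^{-l}=\sum_{l=0}^{w-1}\gb^{-l}=\frac{\gb^{-w}-1}{\gb^{-1}-1}$, a constant independent of $k$. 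Hence the defining condition for $h(n)$ simplifies to
\[
n\le C\bbfloor{\frac{\hmax(n)+w}{w}}\,(\floor{h(n)/w}+1)\,\frac{\gb^{-w}-1}{\gb^{-1}-1}.
\]

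Next I would substitute $\hmax(n)=\sqrt{n}-w$ so that $\floor{(\hmax(n)+w)/w}=\floor{\sqrt{n}/w}\le \sqrt{n}/w$ and rearrange to obtain the lower bound
\[
\floor{h(n)/w}+1\ge\frac{\sqrt{n}\,w}{C}\left(\frac{\gb^{-w}-1}{\gb^{-1}-1}\right)^{\!-1}.
\]
This immediately gives $w\floor{h(n)/w}-w\ge\left(\sqrt{n}\frac{w}{C}\bigl(\frac{\gb^{-w}-1}{\gb^{-1}-1}\bigr)^{-1}-2\right)w$. Combining with the trivial estimate for the second argument of the $\min$ in Theorem~\ref{the1} (using $w\floor{\hmax(n)/w}\ge (\sqrt{n}-w)\cdot w$ up to the floor, which is the counterpart to the stated bound), and using $\delta(h)=c\gb^{h/D}=c\exp(-(h/D)\ln(1/\gb))$, the loss bound of Theorem~\ref{the1} converts into the claimed exponential form, since $-\min(\cdot,\cdot)$ inside $\exp$ corresponds to taking the worse of the two decay rates.

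The only mildly delicate point is the bookkeeping with the floor functions: we must be careful to propagate $\floor{h(n)/w}+1\ge A$ into $w\floor{h(n)/w}-w\ge (A-2)w$ correctly, and similarly handle $\floor{(\hmax(n)+w)/w}\le \sqrt{n}/w$ so that the constants line up with the exponent claimed in the corollary. Beyond these arithmetic manipulations, everything is a direct instantiation of Theorem~\ref{the1} with the simplified sum, so I would expect no conceptual obstacles; the main obstacle is simply keeping the algebra of the geometric sum and the floor/integer manipulations tidy enough that the final expression matches exactly.
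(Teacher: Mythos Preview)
Your proposal is correct and follows essentially the same route as the paper's proof: compute $\lambda_{kw+l}(l+1)=\gb^{-l}$ from Assumption~\ref{assA3}, collapse the sum in Theorem~\ref{the1} to a geometric series using $d=0$, invert to lower-bound $\floor{h(n)/w}$, substitute $\hmax(n)=\sqrt{n}-w$, and convert via $\delta(h)=c\gb^{h/D}$. Your handling of the floor bookkeeping (propagating $\floor{h(n)/w}+1\ge A$ to $w\floor{h(n)/w}-w\ge (A-2)w$) is exactly what the paper does, and your caveat about the second argument of the $\min$ matches the paper's own (somewhat informal) treatment of that floor.
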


\begin{proof}
Based on the definition of $h(n)$ in Theorem~\ref{the1}, we first relate
$h(n)$ to $n$ as
\begin{align*}
n&\le
C\frac{\hmax(n)+w}{w}\sum_{k=0}^{\floor{h(n)/w}}\biggl(\delta(kw)^{-d}+\sum_{l=1}^{w-1}\lambda_{kw+l}(l+1)\delta(kw+l-l)^{-d}\biggr)\\
&=C\frac{\hmax(n)+w}{w}\sum_{k=0}^{\floor{h(n)/w}}\biggl(1+\sum_{l=1}^{w-1}\gb^{-w}\biggr)
\le C\frac{\hmax(n)+w}{w}\biggl(\bbfloor{\frac{h(n)}w}+1\biggr)\sum_{l=0}^{w-1}\gb^{-w}.
\end{align*}
The first line follows the definition of $h(n)$, and the second line is
due to $d = 0$ and Assumption~\ref{assA3}. By algebraic manipulation,
\[
\bbfloor{\frac{h(n)}{w}}\ge
\frac{n}C\frac{w}{\hmax(n)+w}\biggl(\frac{\gb^{-w}-1}{\gb^{-1}-1}\biggr)^{\!-1}-1.
\]
Here, we use $\hmax(n)=\sqrt{n}-w$, and hence
\[
\bbfloor{\frac{h(n)}{w}}\ge
\sqrt{n}\frac{w}C\biggl(\frac{\gb^{-w}-1}{\gb^{-1}-1}\biggr)^{\!-1}-1.
\]
By substituting these results into the statement of Theorem~\ref{the1},
\[
r_n\le
\delta\Biggl(\min\biggl(\sqrt{n}\frac{w^2}C\biggl(\frac{\gb^{-w}-1}{\gb^{-1}-1}\biggr)^{\!-1}-2w,
w\sqrt{n}-w^2\biggr)\Biggr).
\]
From Assumption~\ref{assA3}, $\delta(h) = c\gb^{h/D}$. By using $\delta(h)
= c\gb^{h/D}$ in the above inequality, we have the statement of this
corollary.
\end{proof}

\begin{figure}
\centering
\hair2pt
\labellist
\pinlabel $\gb$ [r] at 0 75
\pinlabel $w$ [t] at 100 0
\endlabellist
\includegraphics[width=0.4\textwidth]{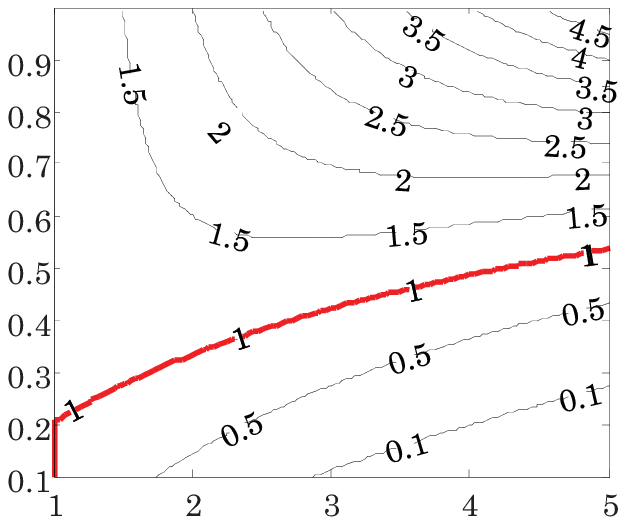}
\vskip5mm
\caption{Effect of local bias $w$ on loss bound in the case of $d =
0 :=w^2(\gb^{-1}-1)/(\gb^{-w}-1)$}\label{fig2}
\end{figure}

Corollary~\ref{cor1} shows that the LOGO algorithm guarantees an
exponential bound on the loss in terms of $\sqrt{n}$ (a stretched
exponential bound in terms of $n$). The loss bound in Corollary~\ref{cor1}
becomes almost identical to that of the SOO algorithm with $w =
1$. Accordingly, we illustrate the effect of $w$, when $n$ is
large enough to let us focus on the coefficient of $\sqrt{n}$, in
Figure~\ref{fig2}. The (red) bold line with label 1 indicates the
area where $w$ has no effect on the bound. The area with lines having
labels greater than one is where $w$ improves the bound, and the area
with labels less than one is where $w$ diminishes the bound. More
concretely, in the figure, we consider the ratio of the coefficient
of $\sqrt{n}$ in the loss bound with the various value of $w$ to that
with $w = 1$. The ratio is $w^2(\gb^{-1}-1)/(\gb^{-w}-1)$ or $w$,
depending on which element of the min in the bound is smaller. Since
$w^2(\gb^{-1}-1)/(\gb^{-w}-1)$ is at most $w$ (in the domain we consider),
we plotted $w^2(\gb^{-1}-1)/(\gb^{-w}-1)$ to avoid overestimating
the benefit of $w$. Thus, this is a rather pessimistic illustration
of the advantage of our generalization regarding $w$. For instance,
if the second element of the min in the bound is smaller and $n$ is
large enough, increasing $w$ always improves the bound, regardless of
the values in Figure~\ref{fig2}.

The next corollary presents the finite-loss bound for the LOGO algorithm
in the case of $d\neq 0$.

\begin{corollary}\label{cor2} 
Let  $\sem$ be a semi-metric such that
Assumptions~\ref{ass1}, \ref{assA1}, \ref{assA2}, and \ref{assA3} are
satisfied. If the near-optimality dimension $d > 0$ and $\hmax(n)$ is
set to be $\Theta((\ln n)^{c_1})-w$ for some $c_1 > 1$, the loss of the
LOGO algorithm is bounded as
\[
r_n\le \OO\Biggl(n^{-1/d}\biggl(w^2(\gb^{wd/D}-\gb^{2wd/D})\biggl(\frac{\gb^{-w}-1}{\gb^{-1}-1}\biggr)^{\!-1}\biggr)^{\!-1/d}\,\Biggr).
\]
\end{corollary}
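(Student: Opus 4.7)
The plan is to follow the blueprint of the proof of Corollary~\ref{cor1}, but with the sum $\sum_{k=0}^{K}\delta(kw)^{-d}$ now being a genuine geometric series in $\gb^{-wd/D}$ rather than collapsing to $K+1$ as it did for $d=0$.

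First, I would substitute the formulas from Assumption~\ref{assA3} into the inequality defining $h(n)$ in Theorem~\ref{the1}: $\delta(kw)^{-d}=c^{-d}\gb^{-kwd/D}$, and $\lambda_{kw+l}(l+1)=(\delta(kw)/\delta(kw+l))^D=\gb^{-l}$. This lets me factor the double sum into a product, where the inner sum over $l\in\{0,\dots,w-1\}$ evaluates to the closed form $(\gb^{-w}-1)/(\gb^{-1}-1)$, giving an inequality of the shape
\[
n\le C\,c^{-d}\,\frac{\hmax(n)+w}{w}\cdot\frac{\gb^{-w}-1}{\gb^{-1}-1}\cdot\sum_{k=0}^{\floor{h(n)/w}}\gb^{-kwd/D}.
\]

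Next, I would bound the geometric sum by $\sum_{k=0}^{K}\gb^{-kwd/D}\le \gb^{-(K+1)wd/D}/(\gb^{-wd/D}-1)$ and rearrange for a lower bound on $\gb^{-w\floor{h(n)/w}d/D}$. Using the identity $\gb^{wd/D}(\gb^{-wd/D}-1)=1-\gb^{wd/D}$, this yields
\[
\gb^{-w\floor{h(n)/w}d/D}\ge\frac{n\,w\,(1-\gb^{wd/D})(\gb^{-1}-1)}{C\,c^{-d}(\hmax(n)+w)(\gb^{-w}-1)}.
\]

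Then I would substitute into the Theorem~\ref{the1} loss bound $r_n\le\delta(w\floor{h(n)/w}-w)=c\gb^{-w/D}\gb^{w\floor{h(n)/w}/D}$, raise the displayed inequality to the power $-1/d$, and absorb the leading $\gb^{-w/D}$ into the bracket via the identity $\gb^{-w/D}=(\gb^{wd/D})^{-1/d}$. This multiplies the factor $1-\gb^{wd/D}$ by $\gb^{wd/D}$ to produce $\gb^{wd/D}-\gb^{2wd/D}$, giving exactly the functional form displayed in Corollary~\ref{cor2}. The prefactor $(\hmax(n)+w)^{1/d}=\Theta((\ln n)^{c_1/d})$ coming from this inversion is polylogarithmic in $n$ and absorbed into the $\OO$ notation.

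Finally, I would verify that the $h(n)$-branch of the min in Theorem~\ref{the1} is the binding one. The inversion gives $w\floor{h(n)/w}=\Theta(\ln n)$, whereas $\hmax(n)=\Theta((\ln n)^{c_1})$ with $c_1>1$; hence for all sufficiently large $n$ the $h(n)$-branch is strictly smaller than the $\hmax(n)$-branch, and the bound derived for that branch controls $r_n$. This is precisely the role of the assumption $c_1>1$: it ensures $\hmax(n)$ grows faster than $h(n)$ and therefore does not become the dominant obstacle. The main obstacle in executing this plan is purely the bookkeeping of the $w$- and $\gb$-dependent constants so that the expression collapses precisely to $w^2(\gb^{wd/D}-\gb^{2wd/D})\bigl((\gb^{-w}-1)/(\gb^{-1}-1)\bigr)^{-1}$; no step is technically deep beyond the standard geometric-series evaluation.
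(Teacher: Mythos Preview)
Your proposal is correct and follows essentially the same route as the paper's proof: substitute Assumption~\ref{assA3} into the defining inequality for $h(n)$ in Theorem~\ref{the1}, factor the double sum into a product of two geometric series, invert to upper-bound $\delta(w\lfloor h(n)/w\rfloor - w)$, and then verify that with $\hmax(n)=\Theta((\ln n)^{c_1})-w$, $c_1>1$, the $\hmax$-branch of the $\min$ is eventually inactive so the $h(n)$-branch controls $r_n$. One caveat worth flagging: both your computation and the paper's own derivation produce a single factor of $w$ (from the $w/(\hmax(n)+w)$ term) inside the bracket before the $-1/d$ power, not $w^2$, so the $w^2$ in the displayed statement appears to be a misprint rather than a gap in your bookkeeping.
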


\begin{proof}
In the same way as the first step in the proof of Corollary~\ref{cor1},
except for $d > 0$,
\[
n\le
Cc^{-d}\frac{\hmax(n)+w}{w}\sum_{k=0}^{\floor{h(n)/w}}\sum_{l=0}^{w-1}\gb^{-l-kwd/D}.
\]
The reason why we could not bound the loss in a similar rate as in the
case $d = 0$ is that the last summation term $\sum_{l=0}^{w-1}$ is no
longer independent of $k$. Since
\[
\sum_{l=0}^{w-1}\gb^{-l-kwd/D} =
\gb^{-kwd/D}\frac{\gb^{-w}-1}{\gb^{-1}-1},\quad
\sum_{k=0}^{\floor{h(n)/w}}\gb^{-kwd/D} =
\frac{\gb^{-(\floor{h(n)/w}+1)wd/D}-1}{\gb^{-wd/D}-1},
\]
with algebraic manipulation,
\[
c^{-d}(\gb^{-(\floor{h(n)/w}+1)wd/D}-1) \ge
\frac{n}C\frac{w}{\hmax(n)+w}\biggl(\frac{\gb^{-w}-1}{\gb^{-1}-1}\biggr)^{\!-1}(\gb^{-wd/D}-1).
\]
Therefore,
\[
c\gb^{(w\floor{h(n)/w}-w)/D} \ge
\Biggl(\frac{n}C\frac{w}{\hmax(n)+w}\biggl(\frac{\gb^{-w}-1}{\gb^{-1}-1}\biggr)^{\!-1}(\gb^{-wd/D}-1)\gb^{2wd/D}\Biggr)^{\!-1/d}.
\]
From Theorem~\ref{the1} and Assumption~\ref{assA3},
\[
r_n\le\max\Biggl(\biggl(\frac{n}C\frac{w}{\hmax(n)+w}
\biggl(\frac{\gb^{-w}-1}{\gb^{-1}-1}\biggr)^{\!-1}
(\gb^{wd/D}-\gb^{2wd/D})\biggr)^{\!-1/d},c\gb^{(w\floor{\hmax(n)/w}-w)/D}\Biggr).
\]
For $\hmax(n)=\Theta((\ln n)^{c_1})-w$ and for a sufficiently large $n$,
the first element of the previous max becomes larger than the second one,
and its order is equivalent to the one in the statement.
\end{proof}

We derived the loss bound for the SOO algorithm
with Assumption~\ref{assA3} in the case of $d\neq
0$ as well. The SOO version of the loss bound is
$r_n\le \OO(n^{-1/d}(\gb^{d/D}-\gb^{2d/D})^{-1/d})$, which is
equivalent to the loss bound of the LOGO algorithm with $w = 1$ in
Corollary~\ref{cor2}. In Figure~\ref{fig3}, we thereby illustrate the
effect of $w$ on the loss bound in the $\OO$ form. In the figure, we
plotted the ratio of the elements inside $\OO$ of the loss bounds. From
Figure~\ref{fig2} and Figure~\ref{fig3}, we can infer that the loss bound
is improved with $w > 1$ if $\gb$ is large and $d$ is small (when $n$
is sufficiently large). Intuitively, this makes sense, since there are
more of the different yet similar sizes of hyperintervals w.r.t.\ $\sem$
if $\gb$ is larger and $d$ is smaller. In that case, dividing all the
hyperintervals in the marginally different sizes would be redundant
and a waste of computational resources. Note that our discussion here
is limited to the loss bound that we have now, which may be tightened
in future work. We would then see the different effects of $w$ on such
tightened bounds.

\begin{figure}
\captionsetup[subfigure]{labelformat=empty}
\centering
\begin{subfigure}[b]{0.296\textwidth}
\hair2pt
\labellist
\pinlabel $\gb$ [r] at 12 55
\pinlabel $w$ [t] at 75 -5
\endlabellist
\includegraphics[width=1.13\textwidth]{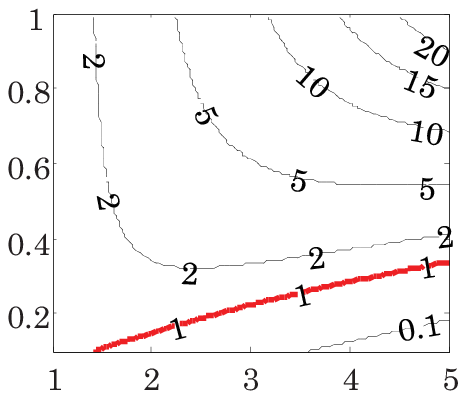}\hfill
\vskip5mm
\caption{(a) $d=0.01$}
\end{subfigure}\hfill
\begin{subfigure}[b]{0.31\textwidth}
\labellist
\pinlabel $\gb$ [r] at 5 55
\pinlabel $w$ [t] at 75 -5
\endlabellist
\includegraphics[width=1.01\textwidth]{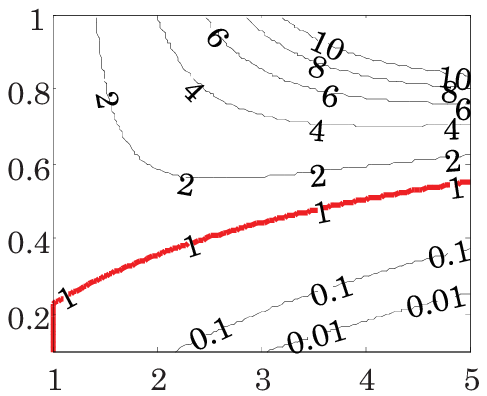}\hfill
\vskip5mm
\caption{(b) $d=0.5$}
\end{subfigure}\hfill
\begin{subfigure}[b]{0.31\textwidth}
\labellist
\pinlabel $\gb$ [r] at 2 55
\pinlabel $w$ [t] at 70 -5
\endlabellist
\includegraphics[width=0.98\textwidth]{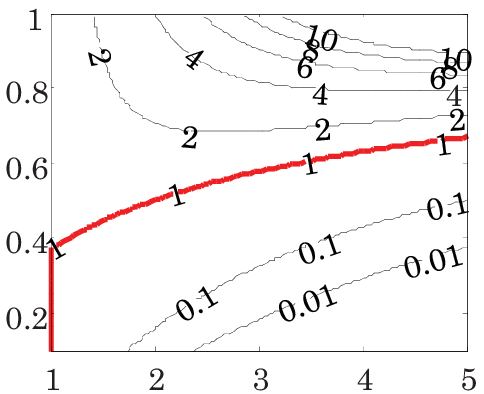}
\vskip5mm
\caption{(c) $d=1.0$}
\end{subfigure}
\caption{Effect of local bias $w$ on loss bound in the case of $d\neq 0
:=(w^2(\gb^{wd/D}-\gb^{2wd/D}(\frac{\gb^{-w}-1}{\gb^{-1}-1})^{-1})^{-1/d}/(\gb^{d/D}-\gb^{2d/D})^{-1/d}$}\label{fig3}
\end{figure}

\subsection{Basis of Practical Usage}\label{sec2.3.2}

In this section, we derive the loss bound of the LOGO algorithm with
the concrete division strategy presented in Section~\ref{sec2.1}. The
purpose of this section is to analyze the LOGO algorithm with the division
process and the parameter settings that are actually used in the rest
of this paper. The results of this section are directly applicable to
our experiments. In this section, we discard
Assumptions~\ref{assA1}, \ref{assA2}, and \ref{assA3}. We consider the
following assumption to present the loss bound in a concrete form.

\begin{assB}\label{assB1}
There exists a semi-metric $\sem$ such that that it satisfies
Assumption~\ref{ass1} and both of the following conditions hold:
\begin{itemize}
\item 
there exist $b >
0$, $\alpha > 0$ and $p\ge1$ such that for all \(x,y \in \Omega'\),  $\sem(x,y)=b\norm{x-y}^\alpha_p$ 
\item 
there exist \(\theta \in (0,1)\) such that for all \(x \in \Omega'\), $ f({ x^ * })  \ge  f( x) + \theta \ell \left( { x,{ x^ * }} \right)$. 
\end{itemize}
\end{assB}

First, we state that the loss bound of the algorithm with the practical
division process and parameter settings decreases at a stretched
exponential rate.

\begin{theorem}[worst-case analysis]\label{the2}
Let $\sem$ be a semi-metric such that Assumptions~\ref{ass1} and
\ref{assB1} are satisfied. The loss of the LOGO algorithm is bounded as
\[
r_n\le
c\exp\Biggl(-\min\biggl(\sqrt{n}\frac{w}{w'C}\biggl(\frac{\gb^{-w}-1}{\gb^{-1}-1}\biggr)^{\!-1}-2,
w'\sqrt{n}-w\biggr)\frac{w}{D}\ln\frac1{\gb}\Biggr)
\]
where $\gb=3^{-\alpha}$ and $c=b3^\alpha D^{\alpha/p}$. Here, $w'=1$
if we set the parameter as $\hmax(n)=\sqrt{n}-w$. On the other hand,
$w'=w$ if we set the parameter as $\hmax(n)=w\sqrt{n}-w$.
\end{theorem}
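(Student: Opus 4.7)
The plan is to reduce Theorem~\ref{the2} to Corollary~\ref{cor1} by supplying the abstract ingredients of that corollary explicitly under the trisection-along-the-longest-coordinate procedure of Algorithm~\ref{alg1}. Starting from $[0,1]^D$, after $h=qD+r$ divisions (with $0\le r<D$) the hyperrectangle has $r$ sides of length $3^{-(q+1)}$ and $D-r$ sides of length $3^{-q}$. The $p$-norm distance from the center to any point of the cell is therefore at most $\tfrac12 D^{1/p}3^{-q}\le\tfrac12 D^{1/p}3^{1-h/D}$, so using the semi-metric $\sem(x,y)=b\norm{x-y}_p^\alpha$ from Assumption~\ref{assB1} I obtain
\[
\sup_{x_{h,i}}\sem(x_{h,i},c_{h,i})\le b\bigl(\tfrac12 D^{1/p}\bigr)^{\!\alpha}3^{\alpha-\alpha h/D}\le c\,\gb^{h/D},
\]
with $c=b\,3^\alpha D^{\alpha/p}$ and $\gb=3^{-\alpha}<1$. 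Defining $\delta(h):=c\,\gb^{h/D}$ verifies Assumptions~\ref{assA1} and~\ref{assA3} (with $\sem$-ball ratio $\lambda_h(w)=\gb^{1-w}$). For Assumption~\ref{assA2}, each cell has shortest side $\ge 3^{-(q+1)}\ge 3^{-h/D-1}$, and a $p$-norm ball of radius $r'$ fits inside any box whose shortest side is $\ge 2r'$ (because the $p$-ball is contained in the $\infty$-ball of the same radius), so choosing $\nu=1/(18^\alpha D^{\alpha/p})$ guarantees that the $\sem$-ball of radius $\nu\delta(h)$ inscribes in $\omega_{h,i}$.

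Next I would pin down the near-optimality dimension as $d=0$ via the $\theta$-condition in Assumption~\ref{assB1}. For any $x\in X_\ops$, that condition yields $\theta\sem(x,x^*)\le f(x^*)-f(x)\le\ops$, hence $\norm{x-x^*}_p\le(\ops/(b\theta))^{1/\alpha}=:R$. Disjoint $\sem$-balls of radius $\nu\ops$ are $p$-balls of radius $(\nu\ops/b)^{1/\alpha}=:r$, and packing them with centers in a $p$-ball of radius $R$ gives a cardinality of at most $(R/r+1)^D=((1/(\theta\nu))^{1/\alpha}+1)^D$, a quantity independent of $\ops$. The near-optimality dimension therefore vanishes with this explicit constant $C$, so the hypotheses of Corollary~\ref{cor1} are met.

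Invoking Corollary~\ref{cor1} with $\hmax(n)=\sqrt{n}-w$ then delivers the bound of Theorem~\ref{the2} in the $w'=1$ case. For $w'=w$, i.e.\ $\hmax(n)=w\sqrt{n}-w$, I would simply rerun the one-line algebraic step inside the proof of Corollary~\ref{cor1}: the identity $\hmax(n)+w=w\sqrt{n}$ turns $\lfloor h(n)/w\rfloor\ge(n/C)(w/(\hmax(n)+w))((\gb^{-w}-1)/(\gb^{-1}-1))^{-1}-1$ into $\lfloor h(n)/w\rfloor\ge(\sqrt{n}/C)((\gb^{-w}-1)/(\gb^{-1}-1))^{-1}-1$, while the second branch of the min in Theorem~\ref{the1} becomes $w\lfloor\hmax(n)/w\rfloor=w\sqrt{n}-w$; feeding these into $\delta(\cdot)=c\exp(-(\cdot)\,(1/D)\ln(1/\gb))$ and extracting a factor of $w$ from the exponent produces the claimed $w'=w$ form. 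The one technical nuisance will be keeping the bookkeeping of the worst case $h\not\equiv 0\pmod D$ clean enough to absorb it into the single factor $3^\alpha$ in $c$; everything else is either Corollary~\ref{cor1} or the same algebraic manipulation restated with the new $\hmax$.
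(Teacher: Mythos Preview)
Your proposal is correct and follows essentially the same route as the paper: verify Assumptions~\ref{assA1}--\ref{assA3} for the trisection procedure with $\delta(h)=c\gb^{h/D}$, $\gb=3^{-\alpha}$, $c=b3^\alpha D^{\alpha/p}$, use the $\theta$-condition of Assumption~\ref{assB1} to force $d=0$, and then invoke (or rerun with the alternate $\hmax$) the proof of Corollary~\ref{cor1}. The only cosmetic differences are that you carry an extra factor $2^{-\alpha}$ in $\nu$ (the paper drops the half-side in its diameter bound) and phrase the $d=0$ step as an explicit packing count rather than the paper's volume ratio, both of which lead to the same conclusion.
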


\begin{proof}
From Assumption \ref{assB1} and the division strategy,
\[
{\sup}_{x_{h,i}}\sem(x_{h,i},c_{h,i})\le b(3^{-\floor{h/D}}D^{1/p})^\alpha
= bD^{\alpha/p}3^{-\alpha\floor{h/D}}
\]
which corresponds to the \emph{diagonal} length of each
rectangle, while $3^{-\alpha\floor{h/D}}$ corresponds to the
length of the \emph{longest} side. This quantity is upper bounded
by $3^{-\alpha h/D +\alpha}$. Thus, we consider the case where
$\delta(h)=b3^{\alpha} D^{\alpha/p}3^{-\alpha h/D}$, which satisfies
Assumption~\ref{assA1}. Also, Assumption~\ref{assA3} is satisfied for
$\delta(h)$ with $\gb=3^{-\alpha}$ and $c=b3^\alpha D^{\alpha/p}$.

Every rectangle contains at least a $\sem$-ball of radius corresponding to
the length of the \emph{shortest} side for the rectangle. Consequently,
we have at least a $\sem$-ball of radius $\nu\delta(h)=b3^{-\alpha}3^{-\alpha
h/D}$ for any rectangle where $\nu=3^{-2\alpha}D^{-\alpha/p}$, which
satisfies Assumption~\ref{assA2}.

From Assumption~\ref{assB1}, the volume $V$ of a $\sem$-ball of
radius $\nu\delta(h)$ is proportional to $(\nu\delta(h))^D$ as the
following: $V^p_D(\nu\delta(h))=(2\nu\delta(h)\Gamma(1+1/p))^D /
\Gamma(1+D/p)$. Therefore, Assumption~\ref{assA3} is satisfied for
the $\sem$-ball ratio $\lambda_h(w)$. In addition, the \emph{$\delta(h)$-optimal
set} $X_{\delta(h)}$ is covered by a $\sem$-ball of radius $\delta(h)$ by
Assumption \ref{assB1}, and thereby contains at most $(\delta(h)/\nu\delta(h))^D =
\nu^{-D}$ disjoint $\sem$-balls of radius $\nu\delta(h)$. Hence, the number of the
$\sem$-balls does not depend on $\delta(h)$, which means $d = 0$.

Now that we have satisfied Assumptions~\ref{assA1}, \ref{assA2},
and \ref{assA3} with $\gb=3^{-\alpha}$, $c=b3^\alpha D^{\alpha/p}$,
and $d = 0$, we obtain the statement by following the proof of
Corollary~\ref{cor1}.
\end{proof}

Regarding the effect of local orientation $w$, Theorem~\ref{the2} presents
the worst-case analysis. Recall that $w$ is introduced in this paper to
restore the practicality of global optimization methods. Thus, focusing
on the worst case is likely too pessimistic. To mitigate this problem,
we present the following optimistic analysis.

\begin{theorem}[best-case analysis in terms of $w$]\label{the3}
Let $\sem$ be a semi-metric such that Assumptions~\ref{ass1} and
\ref{assB1} are satisfied. For $1\le l\le w$, let $\omega_{h+l-1,i'}$
be any hyperinterval that may dominate other intervals in
the set $\psi_h$ during the algorithm's execution. Assume that
$\omega_{h+l-1,i'}\subseteq\psi_h(1)^*$. Then, the loss of the LOGO
algorithm is bounded as
\[
r_n\le c\exp\biggl(-\min\biggl(\sqrt{n}\frac1{w'C}-2,
w'\sqrt{n}-w\biggr)\frac{w}D\ln\frac1{\gb}\biggr)
\]
where $\gb=3^{-\alpha}$ and $c=b3^\alpha D^{\alpha/p}$. Here, $w'=1$
if we set the parameter as $\hmax(n)=\sqrt{n}-w$. On the other hand, $w'=w$
if we set the parameter as $\hmax(n)=w\sqrt{n}-w$.
\end{theorem}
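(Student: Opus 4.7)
The plan is to mirror the proof of Theorem~\ref{the2}, but to sharpen its central counting step (the analogue of Lemma~\ref{lem2}) using the new dominance hypothesis. First I would invoke the Assumption~\ref{assB1} machinery already developed inside the proof of Theorem~\ref{the2}: setting $\gb=3^{-\alpha}$ and $c=b3^\alpha D^{\alpha/p}$ makes Assumptions~\ref{assA1}, \ref{assA2}, and \ref{assA3} hold, and that argument also shows the near-optimality dimension is $d=0$. Thus Lemma~\ref{lem1} supplies a uniform bound $\abs{\psi_{kw}(1)^*}\le C$ for a constant $C$ independent of $k$.

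The second and key step is to revisit Lemma~\ref{lem2}. In its original form each superset $\Psi_k$ contributes, during the window $[\tau(\Psi_{k-1}),\tau(\Psi_k)-1]$, at most $\abs{\psi_{kw}(1)^*}+\sum_{l=1}^{w-1}\abs{\psi_{kw+l}(l+1)^*}$ divisions, because an interval selected from $\psi_{kw+l}$ was only known to satisfy $f(c_{kw+l,i})\ge f(x^*)-\delta(kw)$ and was therefore charged to the coarser set $\psi_{kw+l}(l+1)^*$. Under the new hypothesis, however, every such interval that may dominate members of $\psi_{kw}$ lies inside $\psi_{kw}(1)^*$, so each of the extra terms is bounded by $\abs{\psi_{kw}(1)^*}$. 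The bracket collapses to $w\abs{\psi_{kw}(1)^*}$, and Lemma~\ref{lem2} upgrades to: $k_n^*\ge K$ whenever
\[
n\ge\bbfloor{\frac{\hmax(n)+w}{w}}\sum_{k=0}^{K}w\abs{\psi_{kw}(1)^*}.
\]

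I would then feed $\abs{\psi_{kw}(1)^*}\le C$ into the defining inequality of $h(n)$ in Theorem~\ref{the1}, obtaining $n\le C(\hmax(n)+w)(\floor{h(n)/w}+1)$, with the geometric factor $(\gb^{-w}-1)/(\gb^{-1}-1)$ from Corollary~\ref{cor1} no longer present. Solving for $\floor{h(n)/w}$ and substituting either $\hmax(n)=\sqrt{n}-w$ (so $w'=1$) or $\hmax(n)=w\sqrt{n}-w$ (so $w'=w$) yields $\floor{h(n)/w}\ge\sqrt{n}/(w'C)-1$, which feeds into the conclusion $r_n\le\delta(\min(w\floor{h(n)/w}-w,w\floor{\hmax(n)/w}))$ of Theorem~\ref{the1} via $\delta(h)=c\gb^{h/D}$ to produce the stated exponential bound after collecting the $w/D\cdot\ln(1/\gb)$ factor.

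The main obstacle will be the clean identification of the new hypothesis with the intervals actually being counted in the original Lemma~\ref{lem2}. One has to verify that the intervals in $\psi_{kw+l}$ which could possibly be selected and divided during $[\tau(\Psi_{k-1}),\tau(\Psi_k)-1]$ are exactly those whose center value meets or exceeds $f(c_{kw,i^*})$, and that these are precisely the ``dominating'' intervals envisaged by the hypothesis of Theorem~\ref{the3}. Once that identification is justified, the remaining arithmetic is a near-verbatim rerun of Corollary~\ref{cor1} with the single coefficient $w$ replacing the geometric sum $(\gb^{-w}-1)/(\gb^{-1}-1)$, and this replacement is exactly what the ``best case'' in the theorem's title measures.
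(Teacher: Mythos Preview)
Your proposal is correct and follows essentially the same route as the paper's own proof: modify Lemma~\ref{lem2} by replacing each $\abs{\psi_{kw+l}(l+1)^*}$ with $\abs{\psi_{kw}(1)^*}$ via the dominance hypothesis (so the bracket becomes $w\abs{\psi_{kw}(1)^*}$), rewrite Theorem~\ref{the1} accordingly with $w\delta(kw)^{-d}$ in place of the geometric sum, then re-run the arithmetic of Corollary~\ref{cor1} and Theorem~\ref{the2} with $d=0$ to extract $\floor{h(n)/w}\ge n/(C(\hmax(n)+w))-1$ and substitute the two choices of $\hmax$. Your identification of the one genuine subtlety---matching the ``dominating'' intervals in the hypothesis to the intervals actually counted in the window $[\tau(\Psi_{k-1}),\tau(\Psi_k)-1]$---is exactly the point the paper glosses over, and you are right that once it is checked the rest is bookkeeping.
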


\begin{proof}
The statement of Lemma~\ref{lem2} is modified as
\[
n\ge\bbfloor{\frac{\hmax(n)+w}{w}}\sum_{k=0}^K\biggl(\abs{\psi_{kw}(1)^*}+\sum_{l=1}^{w-1}\abs{\psi_{kw}(1)^*}\biggr).
\]
The statement of Theorem~\ref{the1} is modified as
\begin{align*}
n&\le
C\bbfloor{\frac{\hmax(n)+w}{w}}\sum_{k=0}^{\floor{h/w}}(w\delta(kw)^{-d}),\\
r_n&\le\delta\bigl(\min(w\floor{h(n)/w}-w,w\floor{\hmax(n)/w})\bigr).
\end{align*}
Then, we can follow the proof of Theorem~\ref{the2} and
Corollary~\ref{cor1}, obtaining
\[
\bbfloor{\frac{h(n)}{w}}\ge\frac{n}C\frac{1}{\hmax(n)+w}-1.
\]
With $\hmax(n)=\sqrt{n}-w$, from the modified statement of
Theorem~\ref{the1}, we obtain the statement of this theorem.
\end{proof}

As Theorem~\ref{the3} makes a strong assumption to eliminate the
negative effect of the local orientation in the bound, increasing $w$
always improves the loss bound in the theorem when $n$ is sufficiently
large. This may seem to be overly optimistic, but we show an instance
of this case in our experiment.

\begin{figure}[t!]
\captionsetup[subfigure]{labelformat=empty}
\centering
\begin{subfigure}[b]{0.45\textwidth}
\hair2pt
\labellist
\pinlabel $\alpha$ [r] at 18 90
\pinlabel $w$ [t] at 115 0
\endlabellist
\includegraphics[width=.98\textwidth]{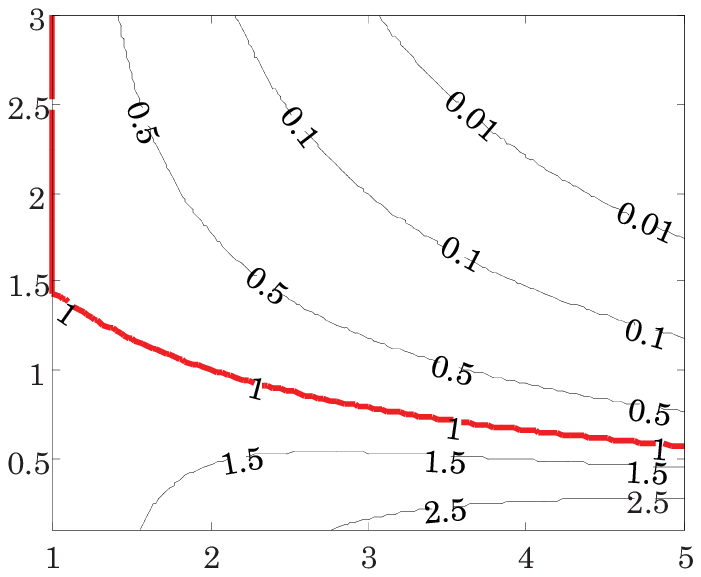}
\vskip5mm
\caption{(a) Pessimistic $w^2(\gb^{-1}-1)/(\gb^{-w}-1)$}
\end{subfigure}
\hfill
\begin{subfigure}[b]{0.45\textwidth}
\hair2pt
\labellist
\pinlabel $\alpha$ [r] at 1 90
\pinlabel $w$ [t] at 100 0
\endlabellist
\includegraphics[width=0.9\textwidth]{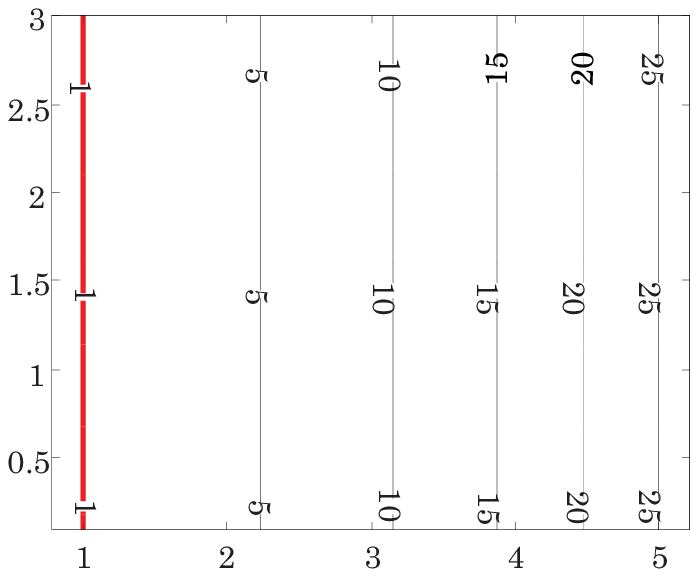}
\vskip5mm
\caption{(b) Optimistic $w^2$}
\end{subfigure}
\caption{Effect of local bias $w$ on loss bound with practical
setting. The real effect would exist somewhere in-between.}\label{fig4}
\end{figure}

More realistically, the effect of $w$ with large $n$ would exist somewhere
between the left and the right diagrams in Figure~\ref{fig4}. As in
the previous figures, the (red) bold line with label~$1$ is where
$w$ has no effect on the bound, the area with labels greater than
one is where $w$ improves the bound, and the area with labels less
than one is where $w$ diminishes the bound. The left diagram shows
the effect of $w$ in the worst case of Theorem~\ref{the2} by plotting
$w^2(\gb^{-1}-1)/(\gb^{-w}-1)$ with $\gb=3^{-\alpha}$. The reason why
plotting $w^2(\gb^{-1}-1)/(\gb^{-w}-1)$ represents the worst case is
discussed in the previous section. The right diagram presents the effect
of $w$ in the best case of Theorem~\ref{the2} or Theorem~\ref{the3}
by simply plotting $w^2$. Notice that in both Theorem~\ref{the2} and
Theorem~\ref{the3}, the best scenario for the effect of $w$ is when we
use $\hmax(n)=w\sqrt{n}-w$ and the second element of the min dominates
the bound. In this case, the coefficient of $\sqrt{n}$ is $w^2$, which
is the effect of $w$ on the bound when $n$ is large enough to ignore
the other term.

In conclusion, we showed that the LOGO algorithm provides a stretched
exponential bound on the loss with the algorithm's division strategy,
which is likely more practical than the one used in the analysis of the
SOO algorithm, and with the parameter setting $\hmax(n)=\sqrt{n}-w$
or $\hmax(n)=w\sqrt{n}-w$. We also discussed how the local bias $w$
affects the loss bound. Based on these results, we use the LOGO algorithm
in the following experiments.

\section{Experimental Results}\label{sec2.4}

In this section, we test the LOGO algorithm with a series of
experiments. In the main part of the experiments, we compared the LOGO
algorithm with its direct predecessor, the SOO algorithm \cite{mun11}
and its latest powerful variant, the Bayesian Multi-Scale Optimistic
Optimization (BaMSOO) algorithm \cite{wan14}. The BaMSOO algorithm
combines the SOO algorithm with a Gaussian Process (GP) to leverage
the GP's estimation of the upper confidence bound. It was shown to
outperform the traditional Bayesian Optimization method that uses a
GP and the DIRECT algorithm \cite{wan14}. Accordingly, we omitted the
comparison with the traditional Bayesian Optimization method.
We also compare LOGO with popular heuristics, simulated annealing (SA) and genetic algorithm
(GA) (see \citeR{rus09} for a brief introduction). 

In the experiments, we rescaled the domains to the $[0, 1]^D$
hypercube. We used the same division process for SOO, BaMSOO and LOGO, which is the one presented in Section~\ref{sec2.2}
and proven to provide stretched exponential bounds on the loss in
Section~\ref{sec2.3.2}. Previous algorithms have also been used with this
division process in experiments \cite{jon93,gab01,mun13,wan14}. For
the SOO and LOGO algorithms, we set $\hmax(n) =w\sqrt{n}-w$. This
setting guarantees a stretched exponential bound for LOGO, as proven in
Section~\ref{sec2.3.2}, and for SOO \cite{mun13}. For the
LOGO algorithm, we used a simple adaptive procedure to set the parameter
$w$. Let $f(x_i^+)$ be the best value found thus far in the end of
iteration $i$. Let $W = \{3, 4, 5, 6, 8, 30\}$. The algorithm begins with
$w = W_1 = 3$. At the end of iteration $i$, the algorithm set $w = W_k$
with $k=\min(j+1,6)$ if $f(x_i^+)\ge f(x_{i-1}^+)$, and $k=\max(j-1,1)$
otherwise, where $W_j$ is the previous parameter value $w$ before this
adjustment occurs. Intuitively, this adaptive procedure is to encourage
the algorithm to be locally biased when it seems to be making progress,
forcing it to explore a more global region when this does not seem to
be the case. Although the values in the set $W = \{3, 4, 5, 6, 8, 30\}$
are arbitrary, this simple setting was used in all the experiments in this
paper, including the real-world application in Section~\ref{sec3.4}. The
results demonstrate the robustness of this setting. As discussed later,
a future work would be to replace this simple adaptive mechanism to
improve the performance of the proposed algorithm. For the BaMSOO
algorithm, the previous work of \citeA{wan14} used a pair of a good kernel
and hyperparameters that were handpicked for each test function. In our
experiments, we assumed that such a handpicking procedure was unavailable,
which is typically the case in practice. We tested several pairs of a
kernel and hyperparameters; however, none of the pairs performed robustly
well for all the test functions (e.g., one pair performed well for one
test function, although not others). Thus, we used the empirical Bayes
method to adaptively update the hyperparameters\footnote{We implemented BaMSOO by ourselves to use the empirical Bayes method, which was not done in the original implementation. The original implementation of BaMSOO was not available for us  as well.  }. We selected the isotropic Matern kernel with  $\nu=5/2$, which is given
by $\kappa(x,x')=g(\sqrt{5\norm{x-x'}^2/l})$, where the function $g$
is defined to be $g(z)=\sigma^2(1+z+z^3/3)$. The hyperparameters were
initialized to $\sigma=1$ and $l=0.25$. We updated the hyperparameters
every iteration until $1{,}000$ function evaluations were executed and
then per $1{,}000$ iterations afterward (to reduce the computational
cost).
For SA and GA, we used the same settings as those of the Matlab standard subroutines \texttt{simulannealbnd} and \texttt{ga}, except that we specified the domain bounds.  
   
\begin{table} 
\centering
\begin{scriptsize}
\renewcommand{\tabcolsep}{1.45pt}
\renewcommand{\arraystretch}{1.4}
\begin{tabular}{|l|c|c|c|c|c|c|c|c|c|c|c|}
\hline
\multicolumn{1}{|c|}{\multirow{2}{*}{$f$}} & \multirow{2}{*}{$D$} & \multirow{2}{*}{$\W$} &
\multicolumn{3}{|c|}{SOO} & 
\multicolumn{3}{|c|}{BaMSOO} & 
\multicolumn{3}{|c|}{LOGO}\\
\cline{4-12}
&&&$N$ & $\mathit{Time}\,(s)$ & $\mathit{Error}$ & 
$N$ & $\mathit{Time}\,(s)$ & $\mathit{Error}$ &
$N$ & $\mathit{Time}\,(s)$ & $\mathit{Error}$\\
\hline
Sin~1 & $1$ & $[0,1]$ & $57$ & $5.3\,\E{-}02$ & $2.3\,\E{-}06$ &
$30$ & $2.0\,\E{+}00$ & $2.3\,\E{-}06$ &
$17$ & $3.9\,\E{-}02$ & $2.3\,\E{-}06$\\
\hline
Sin~2 & $2$ & $[0,1]^2$ & $271$ & $1.7\,\E{-}01$ & $4.6\,\E{-}06$ &
$181$ & $7.5\,\E{+}00$ & $4.6\,\E{-}06$ &
$45$ & $5.4\,\E{-}02$ & $4.6\,\E{-}06$\\
\hline
Peaks & $2$ & $[-3,3]^2$ & $141$ & $1.0\,\E{-}01$ & $9.0\,\E{-}05$ &
$37$ & $3.5\,\E{+}00$ & $9.0\,\E{-}05$ &
$35$ & $6.1\,\E{-}02$ & $9.0\,\E{-}05$\\
\hline
Branin & $2$ & $[-5,10]\times[0,15]$ & $339$ & $2.1\,\E{-}01$ & $9.0\,\E{-}05$ &
$121$ & $8.1\,\E{+}00$ & $9.0\,\E{-}05$ &
$85$ & $7.0\,\E{-}02$ & $8.7\,\E{-}05$\\
\hline
Rosenbrock~2 & $2$ & $[-5,10]^2$ & $491$ & $3.1\,\E{-}01$ & $9.7\,\E{-}06$ &
\gra${>}4000$ & \gra$5.8\,\E{+}04$ & \gra$5.5\,\E{-}03$ &
$137$ & $1.3\,\E{-}01$ & $9.7\,\E{-}06$\\
\hline
Hartman~3 & $3$ & $[0,1]^3$ & $359$ & $2.3\,\E{-}01$ & $7.91\,\E{-}05$ &
$126$ & $8.9\,\E{+}00$ & $7.9\,\E{-}05$ &
$65$ & $7.1\,\E{-}02$ & $5.1\,\E{-}05$\\
\hline
Shekel~5 & $4$ & $[0,10]^4$ & $1101$ & $6.6\,\E{-}01$ & $8.4\,\E{-}05$ &
$316$ & $3.1\,\E{+}01$ & $8.4\,\E{-}05$ &
$157$ & $1.2\,\E{-}01$ & $8.4\,\E{-}05$\\
\hline
Shekel~7 & $4$ & $[0,10]^4$ & $1117$ & $7.1\,\E{-}01$ & $9.4\,\E{-}05$ &
$95$ & $1.2\,\E{+}01$ & $9.4\,\E{-}05$ &
$157$ & $1.2\,\E{-}01$ & $9.4\,\E{-}05$\\
\hline
Shekel~10 & $4$ & $[0,10]^4$ & $1117$ & $6.4\,\E{-}0.1$ & $9.68\,\E{-}05$ &
\gra${>}4000$ & \gra$4.5\,\E{+}04$ & \gra$8.1\,\E{+}00$ &
$197$ & $1.5\,\E{-}01$ & $9.7\,\E{-}05$\\
\hline
Hartman~6 & $6$ & $[0,1]^6$ & $1759$ & $1.2\,\E{+}00$ & $7.51\,\E{-}05$ &
\gra${>}4000$ & \gra$4.0\,\E{+}04$ & \gra$2.3\,\E{-}03$ &
$161$ & $1.3\,\E{-}01$ & $6.8\,\E{-}05$\\
\hline
Rosenbrock~10 & $10$ & $[-5,10]^{10}$ & \gra${>}8000$ & \gra$7.8\,\E{+}00$ & \gra$3.83\,\E{-}03$ &
\gra${>}8000$ & \gra$5.8\,\E{+}04$ & \gra$9.6\,\E{+}00$ &
$1793$ & $1.7\,\E{+}00$ & $4.8\,\E{-}05$\\
\hline
\end{tabular}
\end{scriptsize}
\caption{Performance comparison in terms of the number of evaluations
($N$) and CPU time ($\mathit{Time}$) to achieve $\mathit{Error} <
10^{-4}$. The grayed cells indicate the experiments where we could not
achieve $\mathit{Error} < 10^{-4}$ even with a large number of function
evaluations ($4000$ or $8000$).}\label{tab1}
\end{table}

Table~\ref{tab1} shows the results of the comparison with $11$ test
functions in terms of the number of evaluations and CPU time to achieve
a small error. The first two test functions, Sin~1 and Sin~2, were used
to test the SOO algorithm \cite{mun13}, and have the form
$f(x) = (\sin(13x)\sin(27x) + 1) / 2$  and  $f (x_1, x_2) = f (x_1) f
(x_2)$ respectively. The form of the third function, Peaks, is given
in Equation (16) and illustrated in Figure 2 of \citeauthor{mcd07}'s paper \citeyear{mcd07}. The rest
of the test functions are common benchmarks in global optimization
literature; \citeauthor{sur13} present detailed information about the functions \citeyear{sur13}. In the table, $\mathit{Time}\,(s)$ indicates CPU time
in second and $\mathit{Error}$ is defined as
\[
\mathit{Error}=
\begin{cases}
\abs{(f(x^*)-f(x^+))/f(x^*)} & \text{if }f(x^*)\neq0,\\
\abs{f(x^*)-f(x^+)} & \text{otherwise}.\\
\end{cases}
\]
In the table, $N = 2n$ is the number of function evaluations needed to
achieve $\mathit{Error} < 10^{-4}$, where $n$ is the total number of
divisions and is the one used as the main measure in the analyses in
the previous sections. Here, $N$ is equal to $2n$ because of the adopted
division process. Thus, the lower the value of $N$ becomes, the better
the algorithm's performance is. We continued iterations until $4000$
function evaluations for all the functions with dimensionality less than
$10$, and $8000$ for the function with dimensionality equal to $10$.

As can be seen in Table~\ref{tab1}, the LOGO algorithm outperformed
the other algorithms. The superior performance of the LOGO algorithm
with the small number of function evaluations is attributable to its
focusing on the promising area discovered during the search. Conversely,
the SOO algorithm continues to search the global domain and tends to be
similar to a uniform grid search. The BaMSOO algorithm also follows the
tendency toward a grid search as it is based on the SOO algorithm. The
BaMSOO algorithm chooses where to divide based on the SOO algorithm;
however, it omits function evaluations when the upper coincidence bound
estimated by the GP indicates that the evaluation is not likely to be
beneficial. Although this mechanism of the BaMSOO algorithm seems to be
beneficial to reduce the number of function evaluations in some cases,
it has two serious disadvantages. The first disadvantage is computational
time due to the use of GP. Notice that it requires $O(N^3)$ every time
to re-compute the upper confidence bound.%
\footnote{Although there are several methods to mitigate its computational
burden by approximation, the effect of the approximation on the
performance of the BaMSOO algorithm is unknown and left to a future work.}
A more serious disadvantage is the possibility of not determining a
solution at all. From Table~\ref{tab1}, we can see that BaMSOO improves
the performance of SOO in $7/11$ cases; however, it \emph{severely}
degrades the performance in $4/11$ cases. Moreover, not only may BaMSOO
reduce the performance but also it may not guarantee convergence even in
the limit \emph{in practice}. The BaMSOO algorithm reduces the number of
function evaluations by relying on the estimation of the upper confidence
bound. However, the estimation can be wrong, and if it is wrong, it may
never explore the region where the global optimizer exists.  Notice
that these limitations are not unique to BaMSOO but also apply to
many GP-based optimization methods. In terms of the first limitation
(computational time), BaMSOO is a significant improvement when compared
to other traditional GP-based optimization methods \cite{wan14}.

Although the LOGO algorithm has a bias toward local search, it maintains
a strong theoretical guarantee, similar to the SOO algorithm, as
proven in the previous sections. In terms of theoretical guarantee,
the SOO algorithm and the LOGO algorithm share a similar rate on the
loss bound and base their analyses on the same set of assumptions that
hold in practice. On the other hand, the BaMSOO algorithm has a worse
rate on the loss bound (an asymptotic loss of the order $n^{-(1-\ops)/d})$
and its bound only applies to a restricted class of a metric $\sem$ (the
Euclidean norm to the power $\alpha = \{1,2\}$). It also requires several
additional assumptions to guarantee the bound. Some of the additional
assumptions would be impractical, particularly
the assumption of the objective function being always well-captured by the
GP with a chosen kernel and hyperparameters. As discussed above, this
assumption would cause BaMSOO to not only lose the loss bound but also
the consistency guarantee (i.e., convergence in the limit) in practice.

Figure~\ref{fig5} presents the performance comparison for each number
of function evaluations and Figure~\ref{fig6} plots the corresponding
computational time. In both figures, a lower plotted value along the
vertical axis indicates improved algorithm performance. For SA and GA, each figure shows the mean over 10 runs.
We report the mean of the standard deviation over time in the following. For SA, it was 1.19 (Sin 1), 1.32 (Sin 2), 0.854 (Peaks), 0.077 (Branin), 1.06 (Rosenbrock 2), 0.956 (Hartman 3), 0.412 (Shekel 5), 0.721 (Shekel 7), 1.38 (Shekel 10), 0.520 (Hartman 6), and 0.489 (Rosenbrock 10). For GA, it was 0.921 (Sin 1), 0.399 (Sin 2), 0.526 (Peaks), 0.045 (Branin), 1.27 (Rosenbrock 2), 0.493 (Hartman 3), 0.216 (Shekel 5), 0.242 (Shekel 7), 1.19 (Shekel 10), 0.994 (Hartman 6), and 0.181 (Rosenbrock 10).  \begin{figure}[t]
\centering
\begin{subfigure}[b]{0.32\textwidth}
\includegraphics[width=\textwidth]{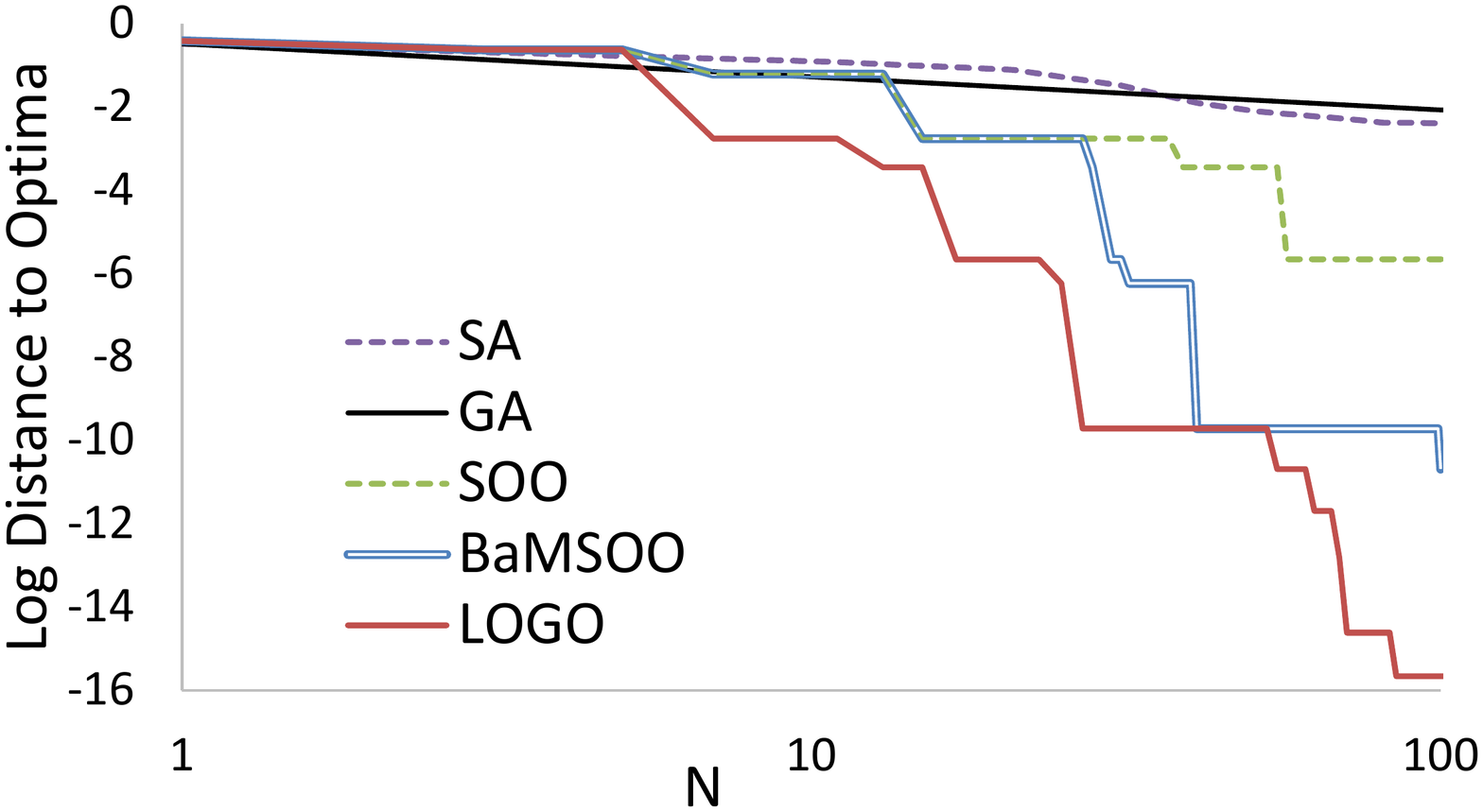}
\caption{Sin 1}
\end{subfigure}\hfill
\begin{subfigure}[b]{0.32\textwidth}
\includegraphics[width=\textwidth]{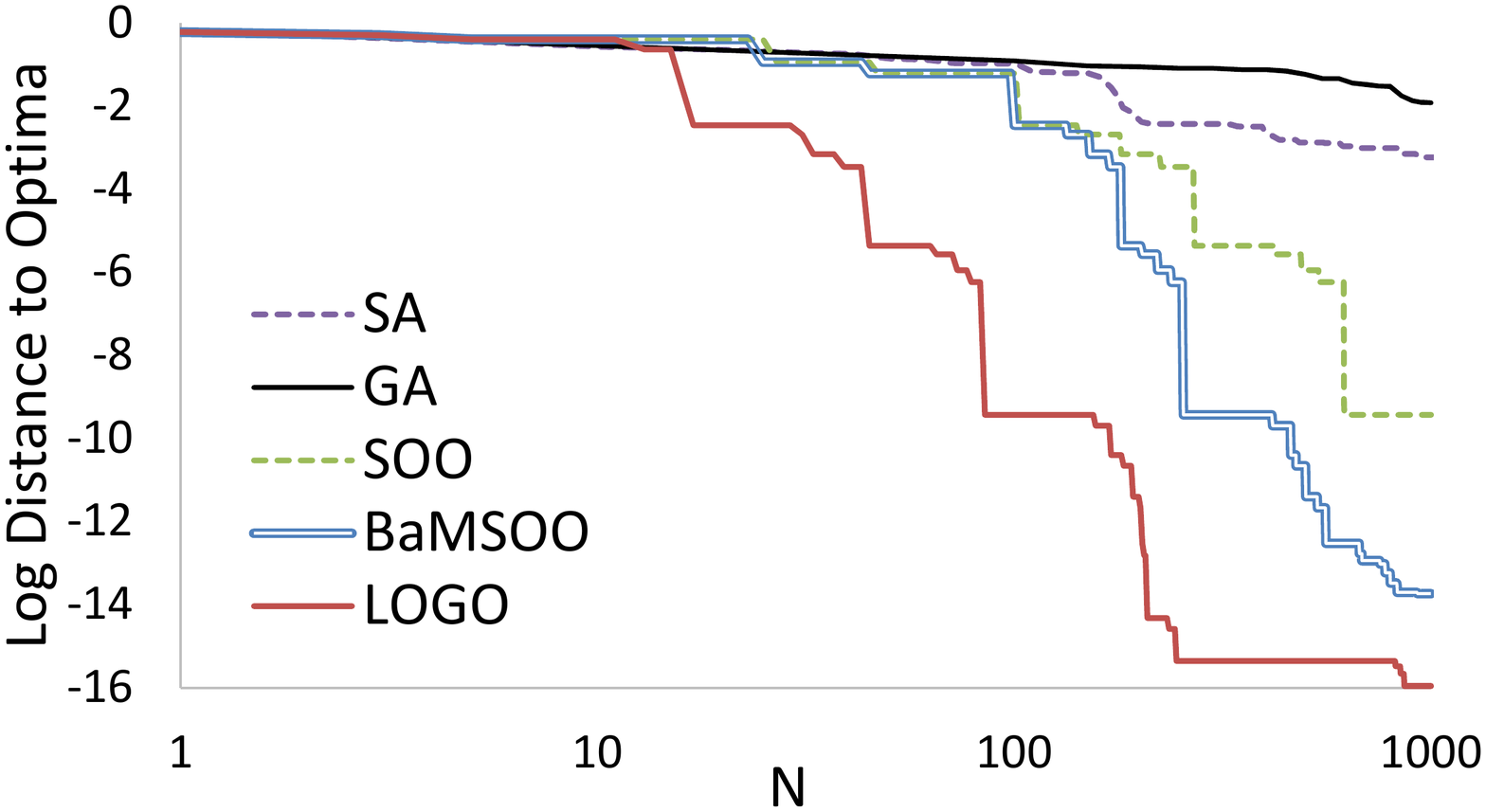}
\caption{Sin 2}
\end{subfigure}\hfill
\begin{subfigure}[b]{0.32\textwidth}
\includegraphics[width=\textwidth]{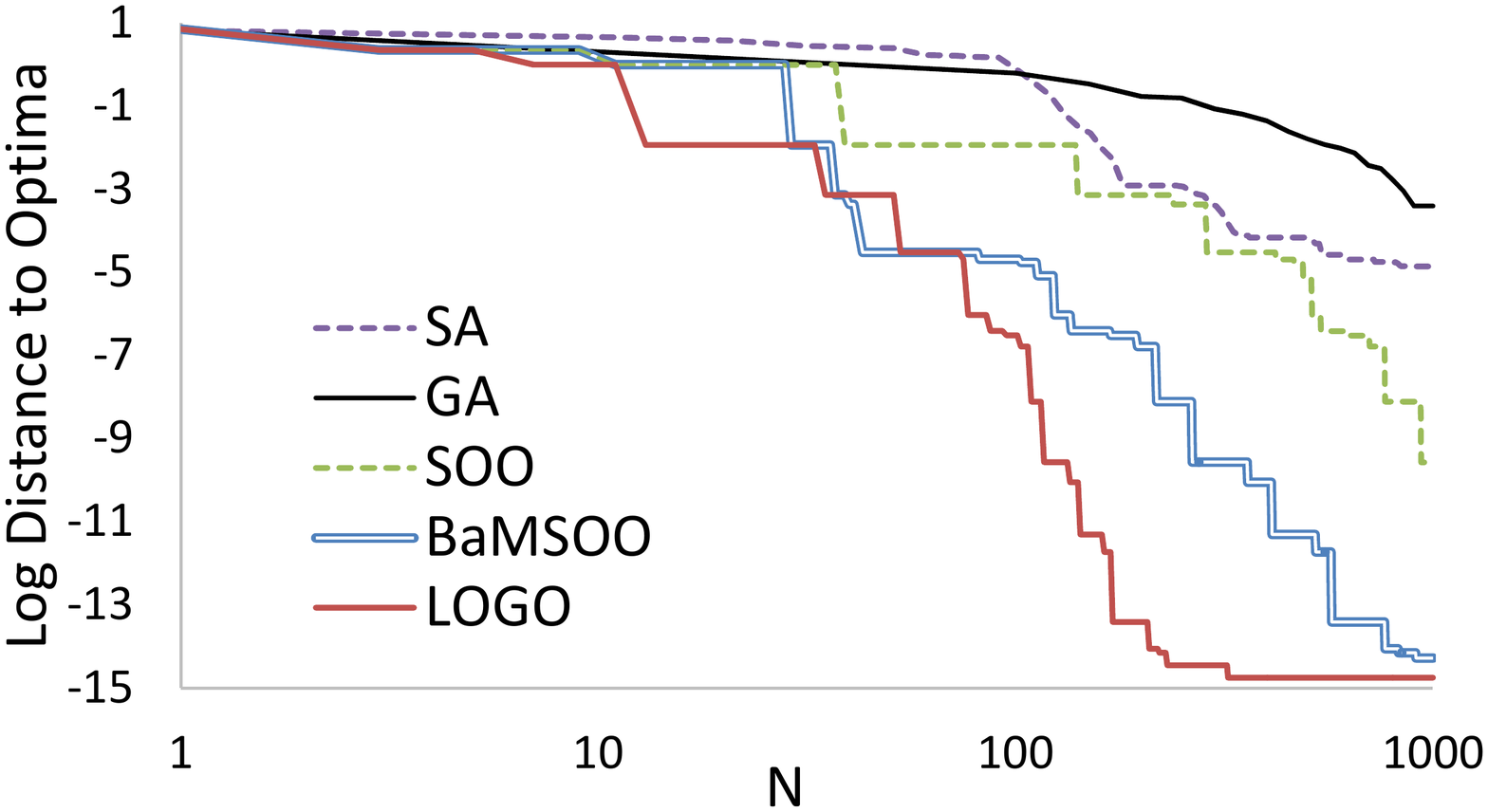}
\caption{Peaks}
\end{subfigure}
\vskip3mm
\begin{subfigure}[b]{0.32\textwidth}
\includegraphics[width=\textwidth]{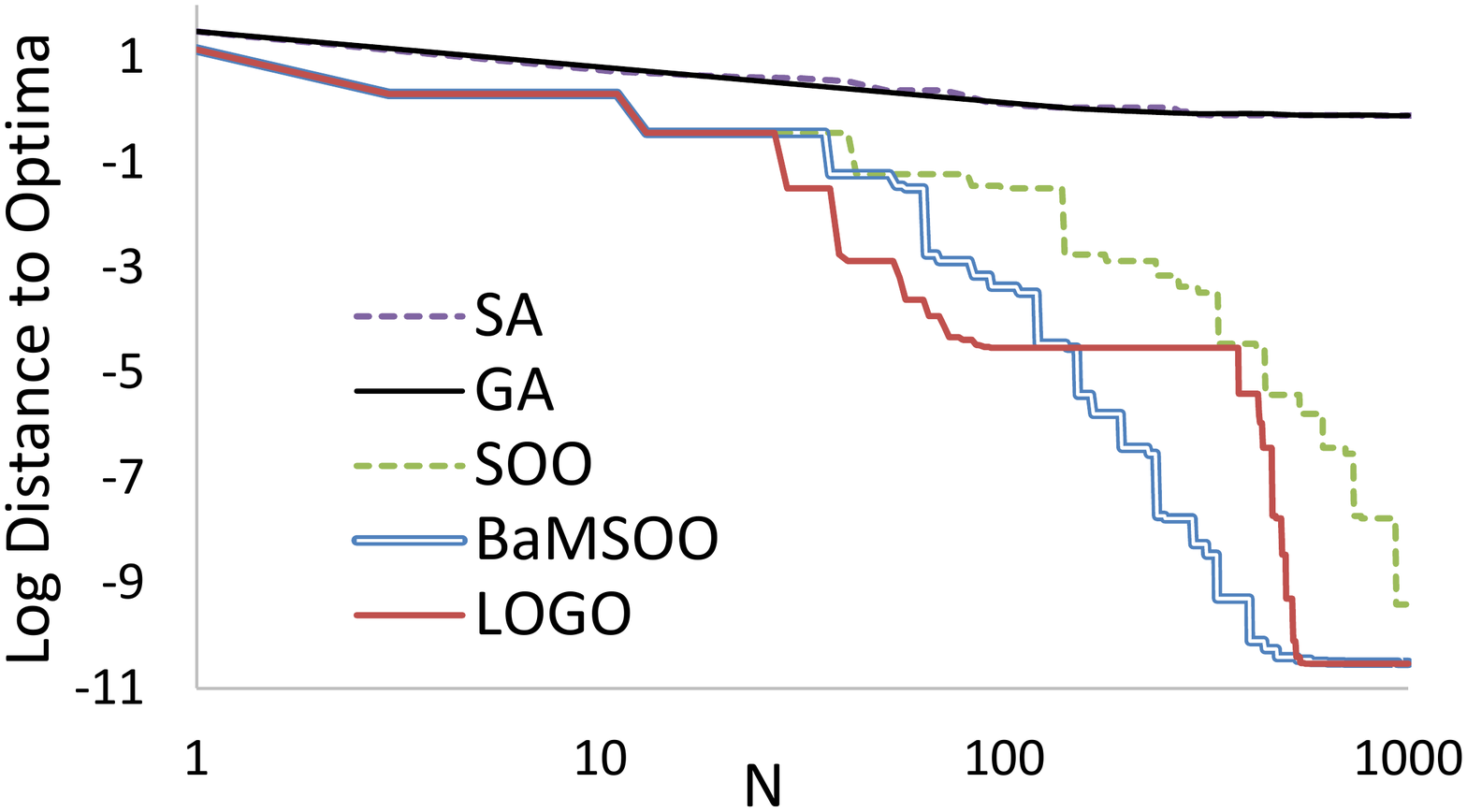}
\caption{Branin}
\end{subfigure}\hfill
\begin{subfigure}[b]{0.32\textwidth}
\includegraphics[width=\textwidth]{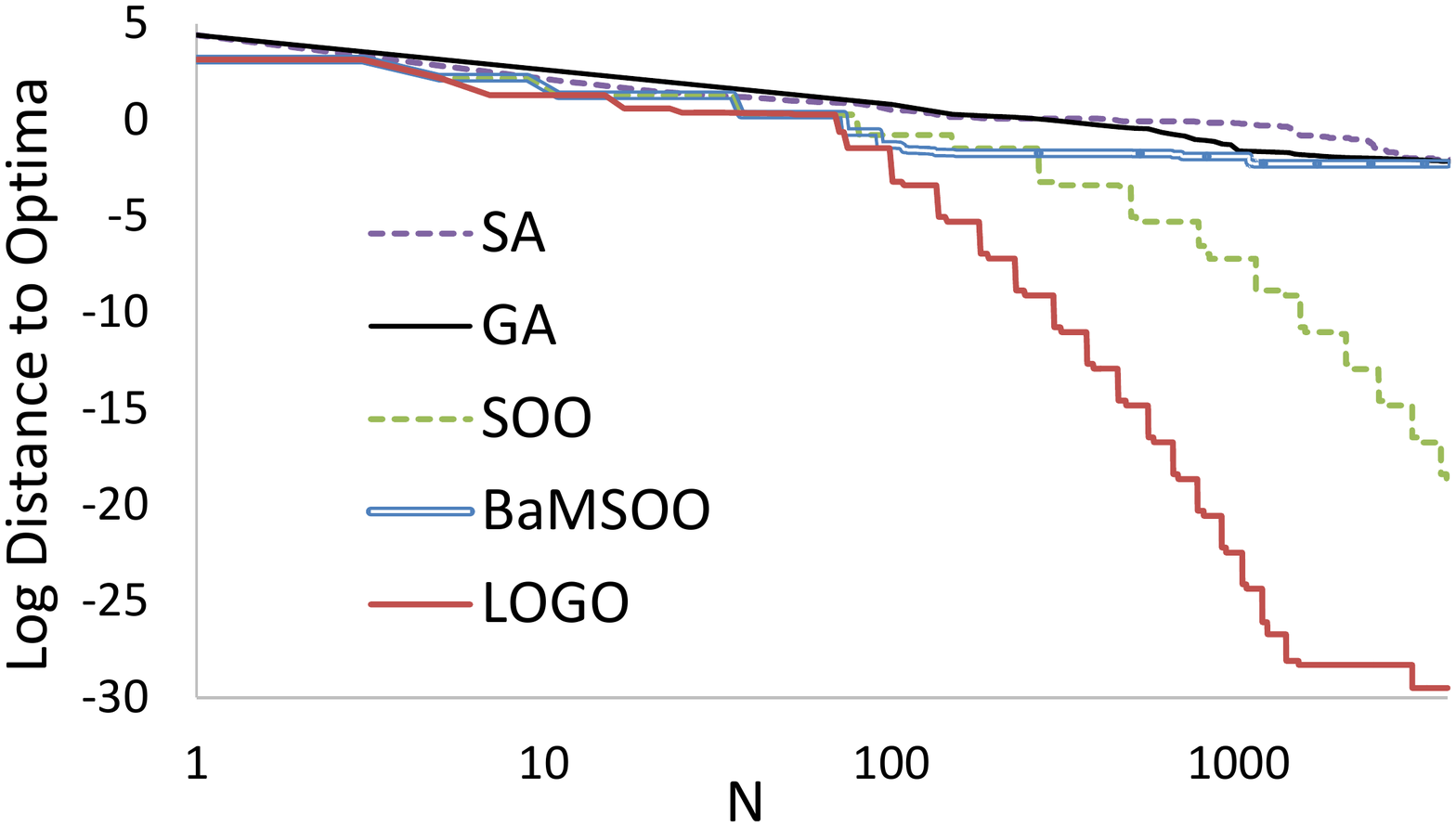}
\caption{Rosenbrock 2}
\end{subfigure}\hfill
\begin{subfigure}[b]{0.32\textwidth}
\includegraphics[width=\textwidth]{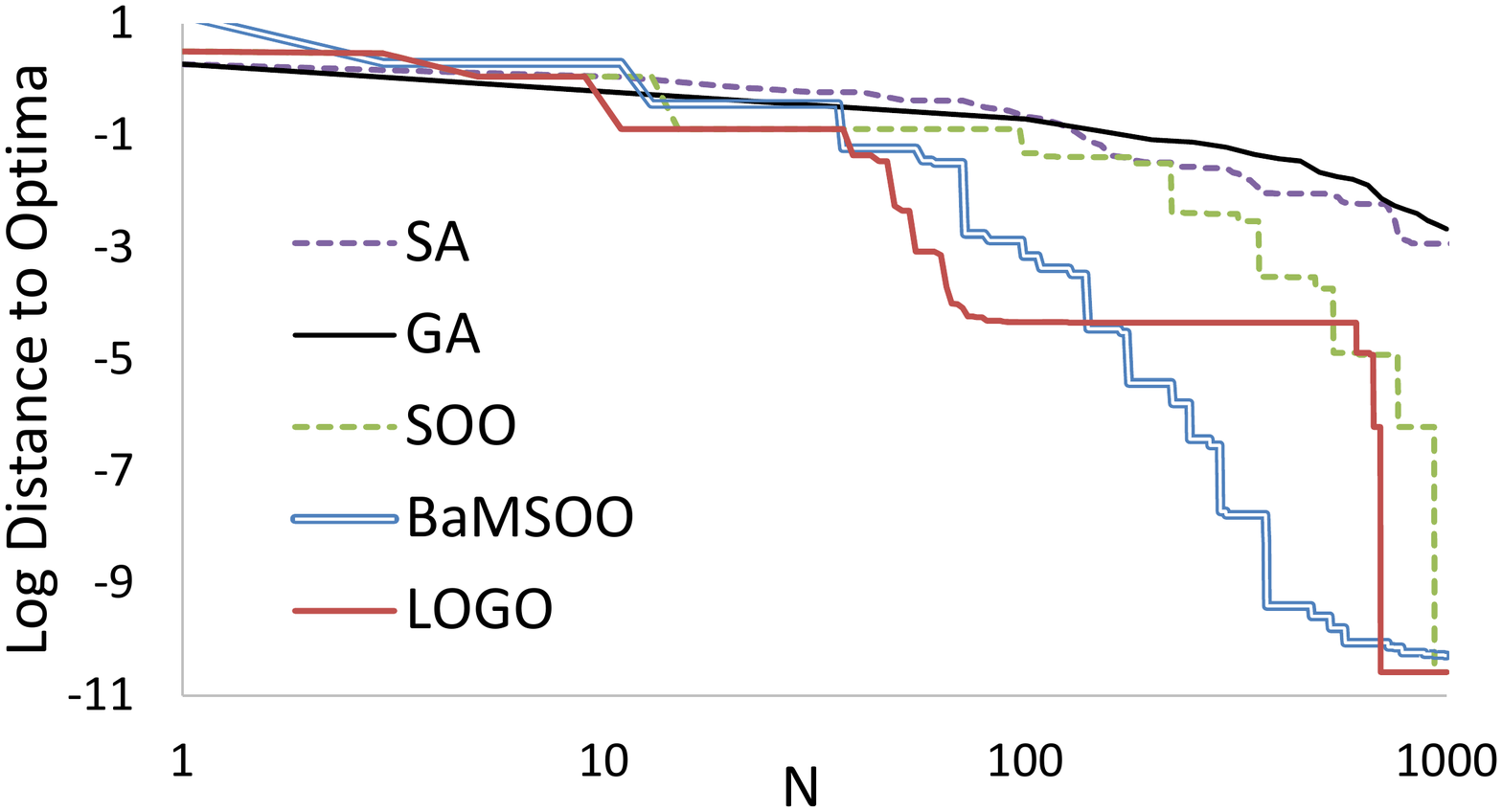}
\caption{Hartman 3}
\end{subfigure}
\vskip3mm
\begin{subfigure}[b]{0.32\textwidth}
\includegraphics[width=\textwidth]{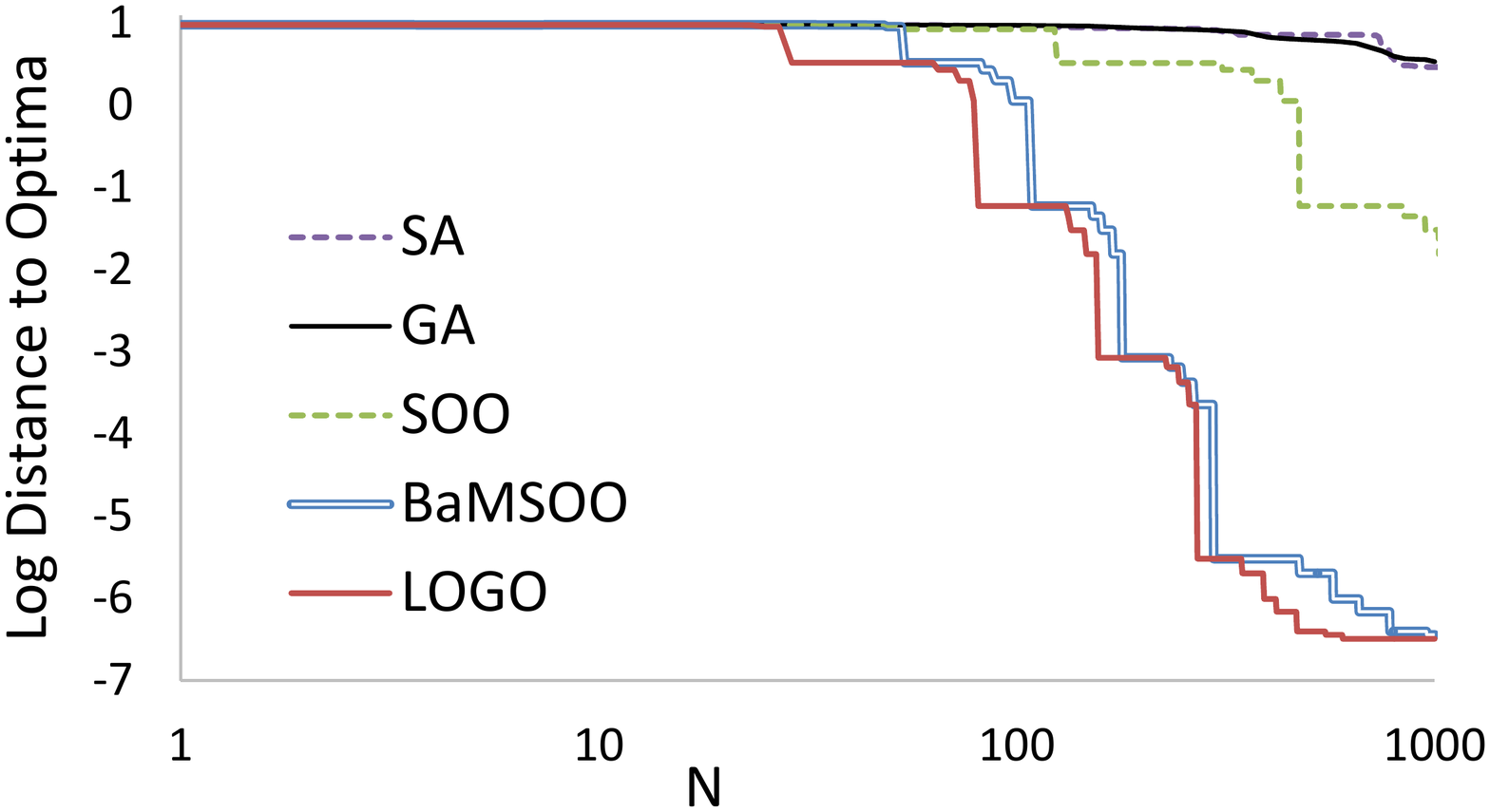}
\caption{Shekel 5}
\end{subfigure}\hfill
\begin{subfigure}[b]{0.32\textwidth}
\includegraphics[width=\textwidth]{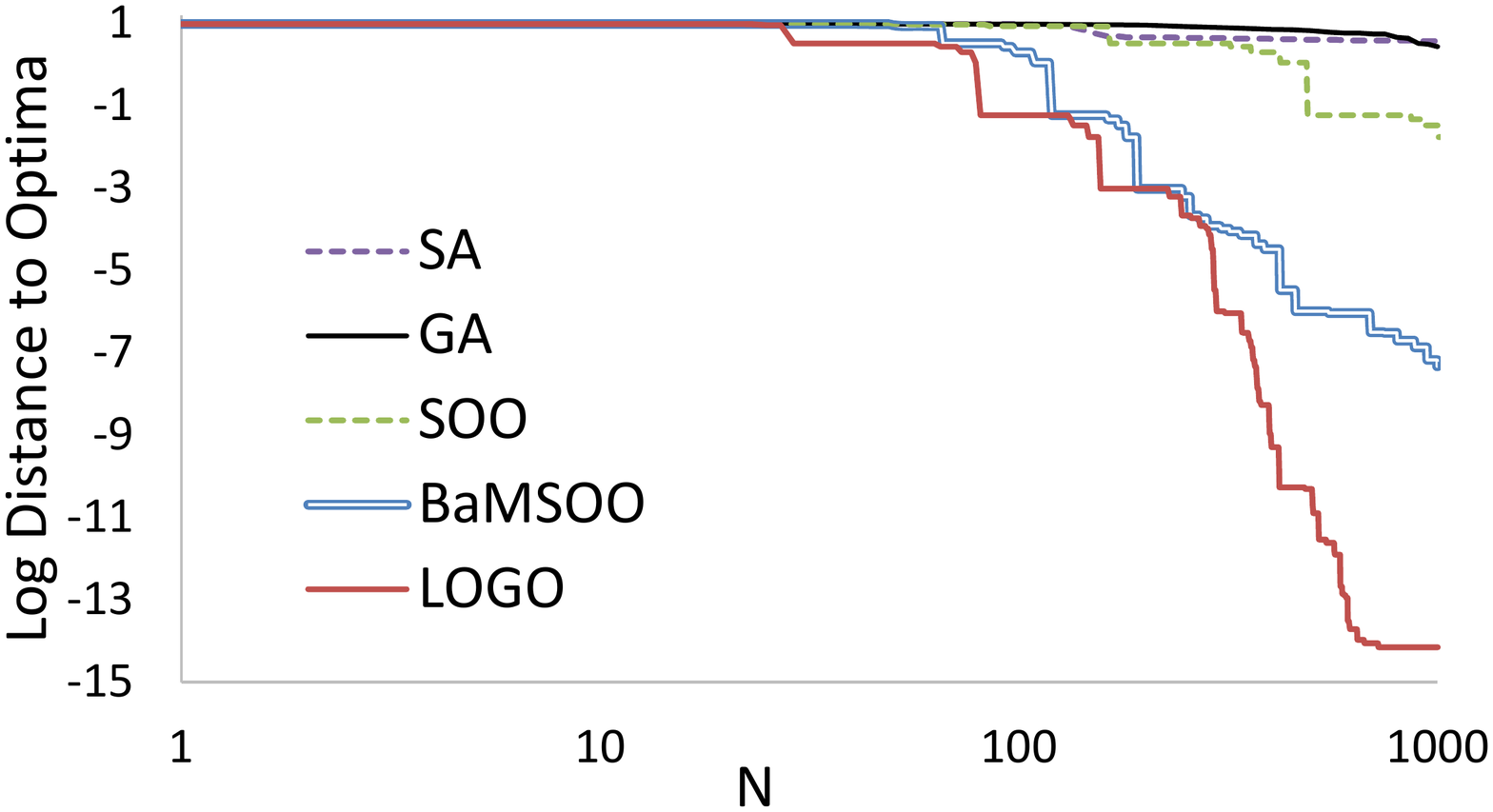}
\caption{Shekel 7}
\end{subfigure}\hfill
\begin{subfigure}[b]{0.32\textwidth}
\includegraphics[width=\textwidth]{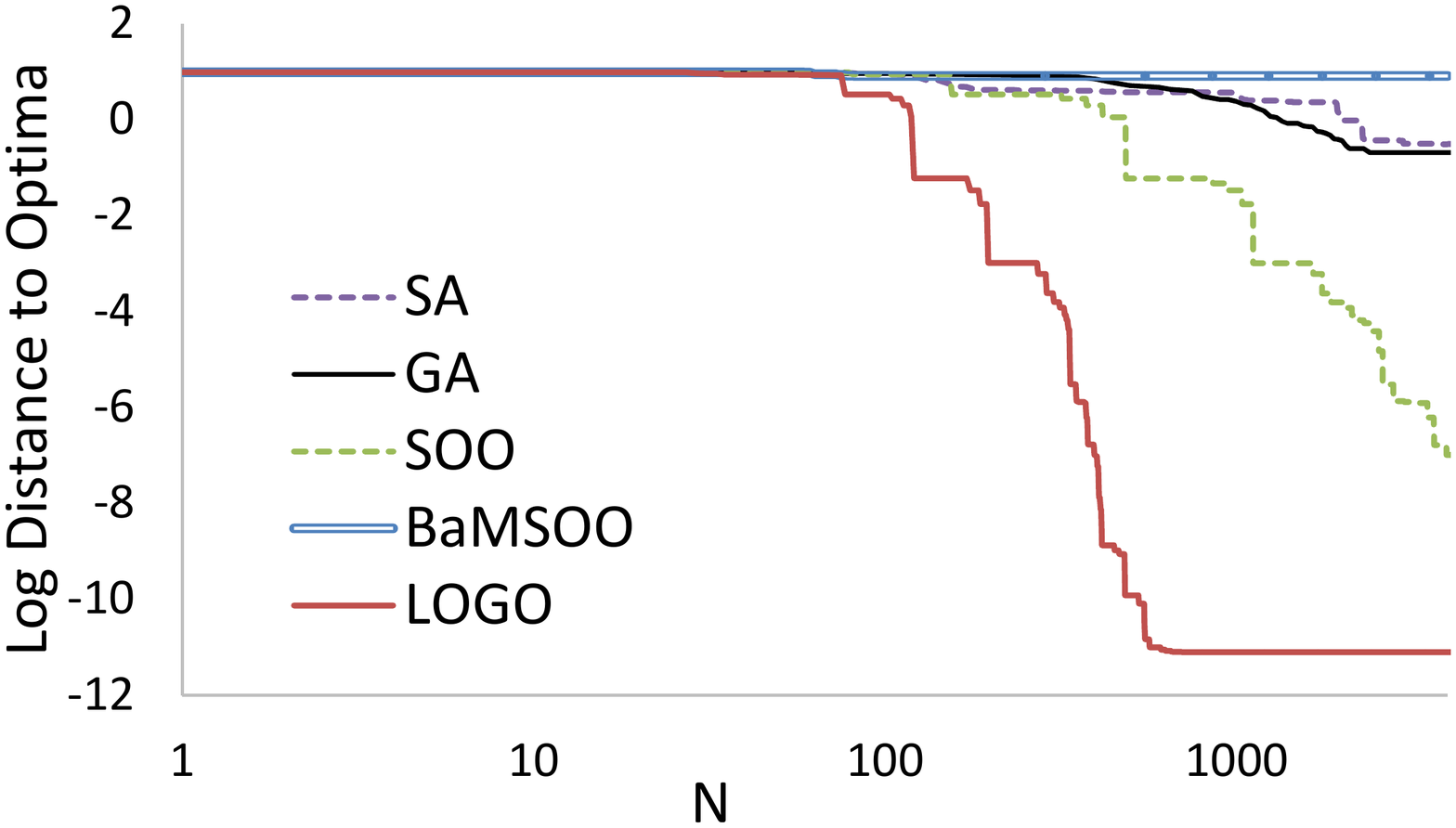}
\caption{Shekel 10}
\end{subfigure}
\vskip3mm
\begin{subfigure}[b]{0.32\textwidth}
\includegraphics[width=\textwidth]{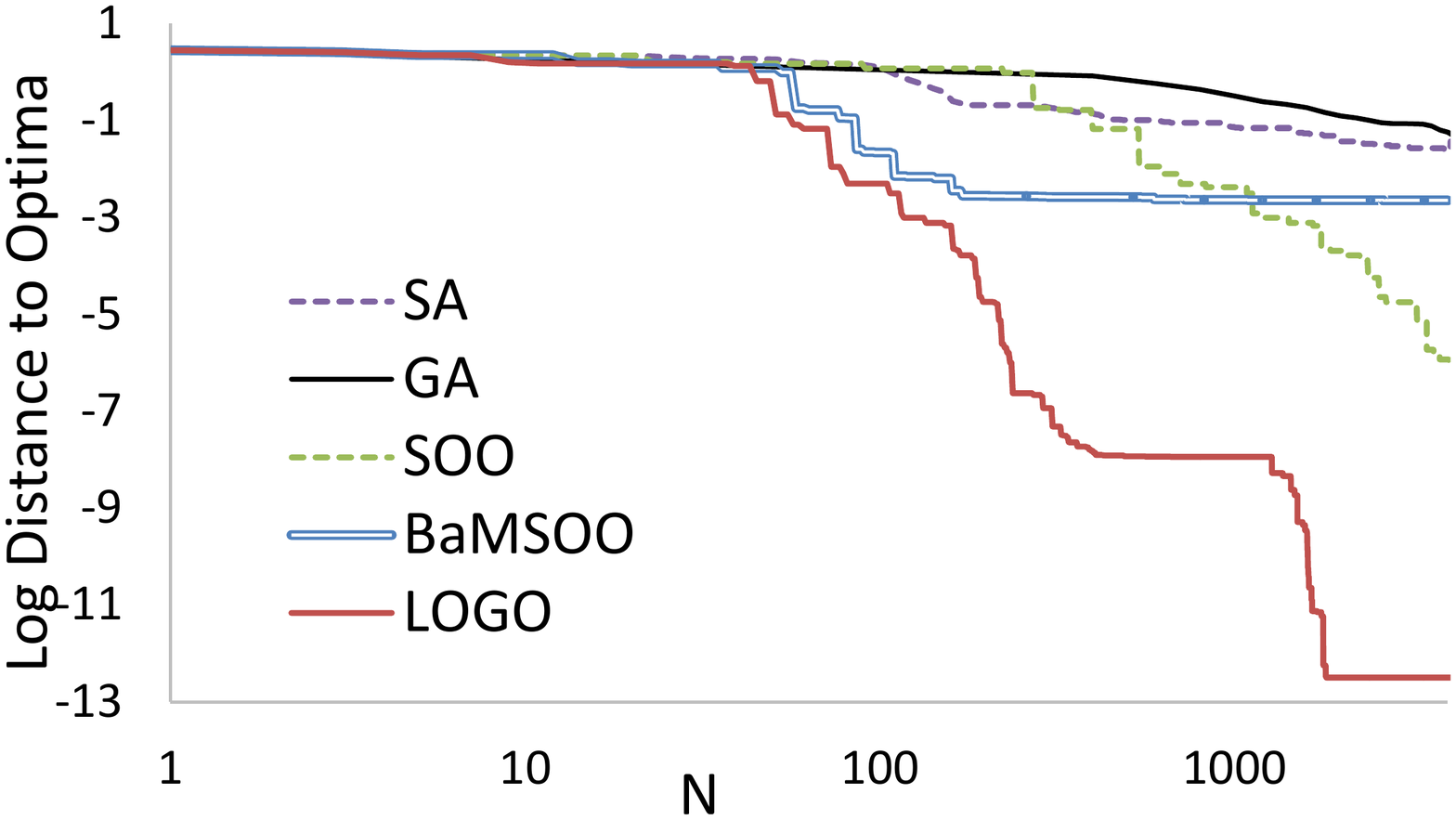}
\caption{Hartman 6}
\end{subfigure}\hfill
\begin{subfigure}[b]{0.32\textwidth}
\includegraphics[width=\textwidth]{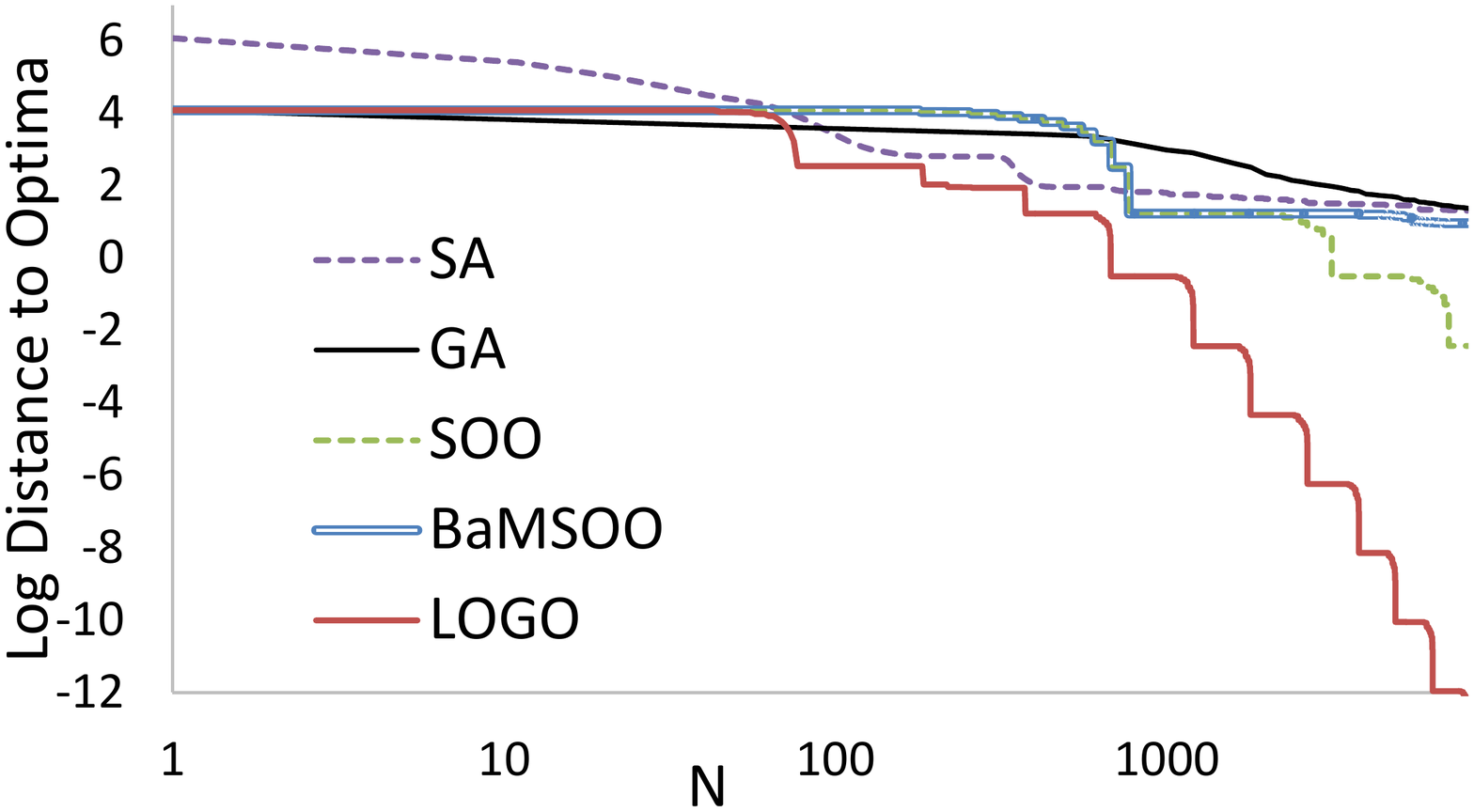}
\caption{Rosenbrock 10}
\end{subfigure}\hfill
\rule{0.32\textwidth}{0pt}
\caption{Performance comparison: the number of evaluations $N$ vs.\
the log error computed as $\log_{10}\abs{f(x^*)-f(x^+)}$. $f(x^*)$
indicates the true optimal value of the objective function and $f(x^+)$
is the best value determined by each algorithm.}\label{fig5}
\end{figure}

\begin{figure}[t]
\centering
\begin{subfigure}[b]{0.32\textwidth}
\includegraphics[width=\textwidth]{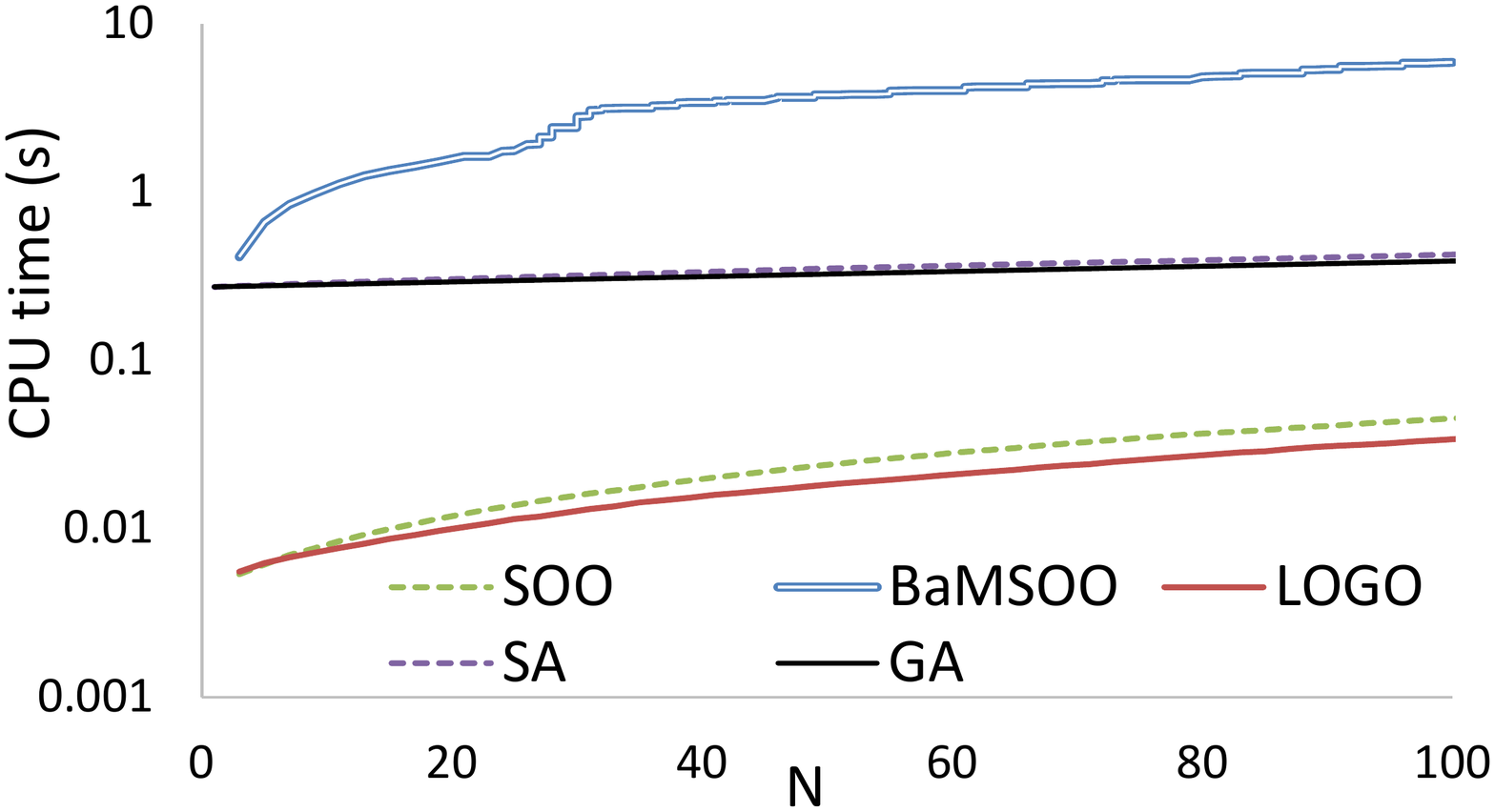}
\caption{Sin 1}
\end{subfigure}\hfill
\begin{subfigure}[b]{0.32\textwidth}
\includegraphics[width=\textwidth]{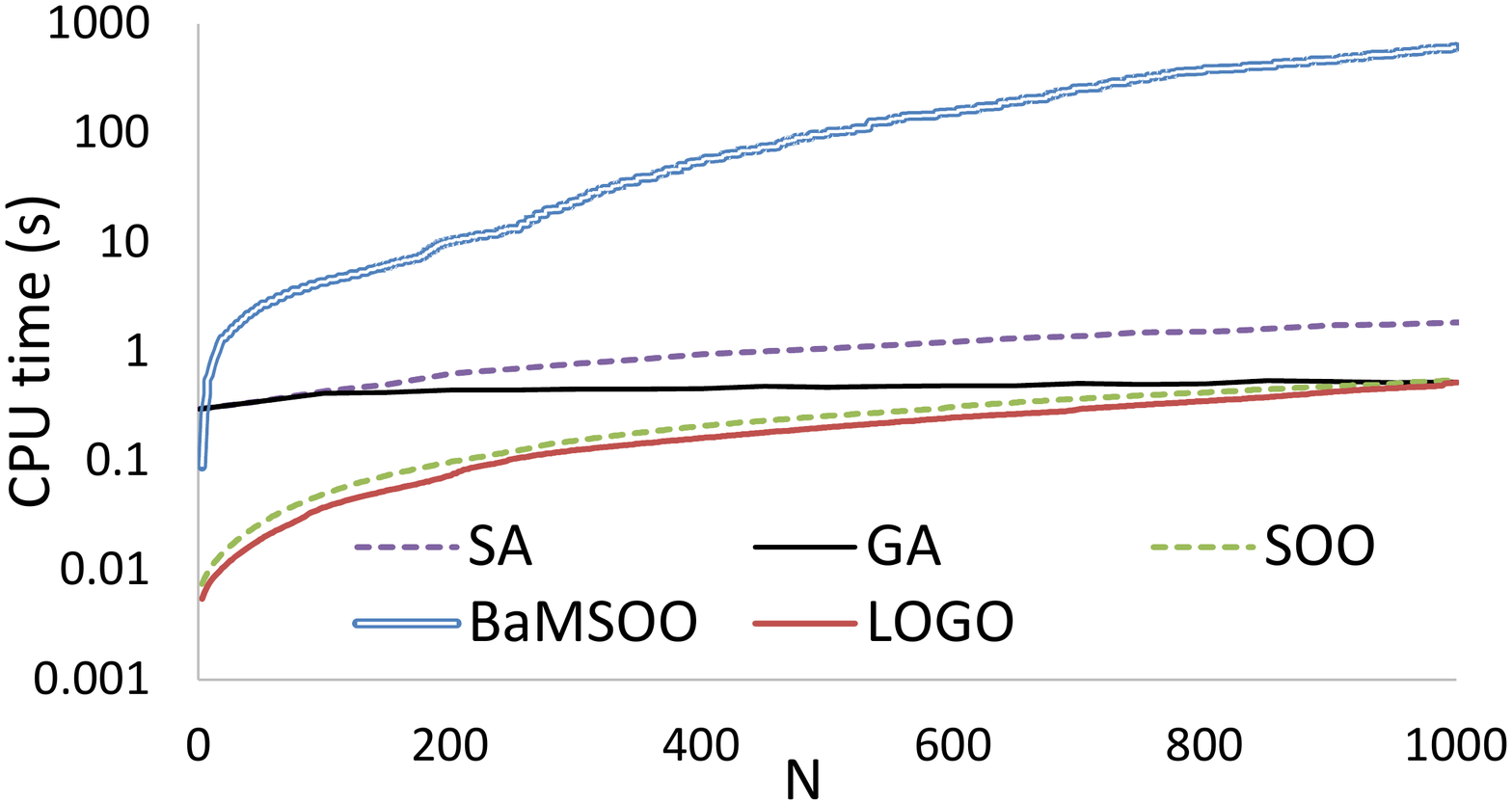}
\caption{Sin 2}
\end{subfigure}\hfill
\begin{subfigure}[b]{0.32\textwidth}
\includegraphics[width=\textwidth]{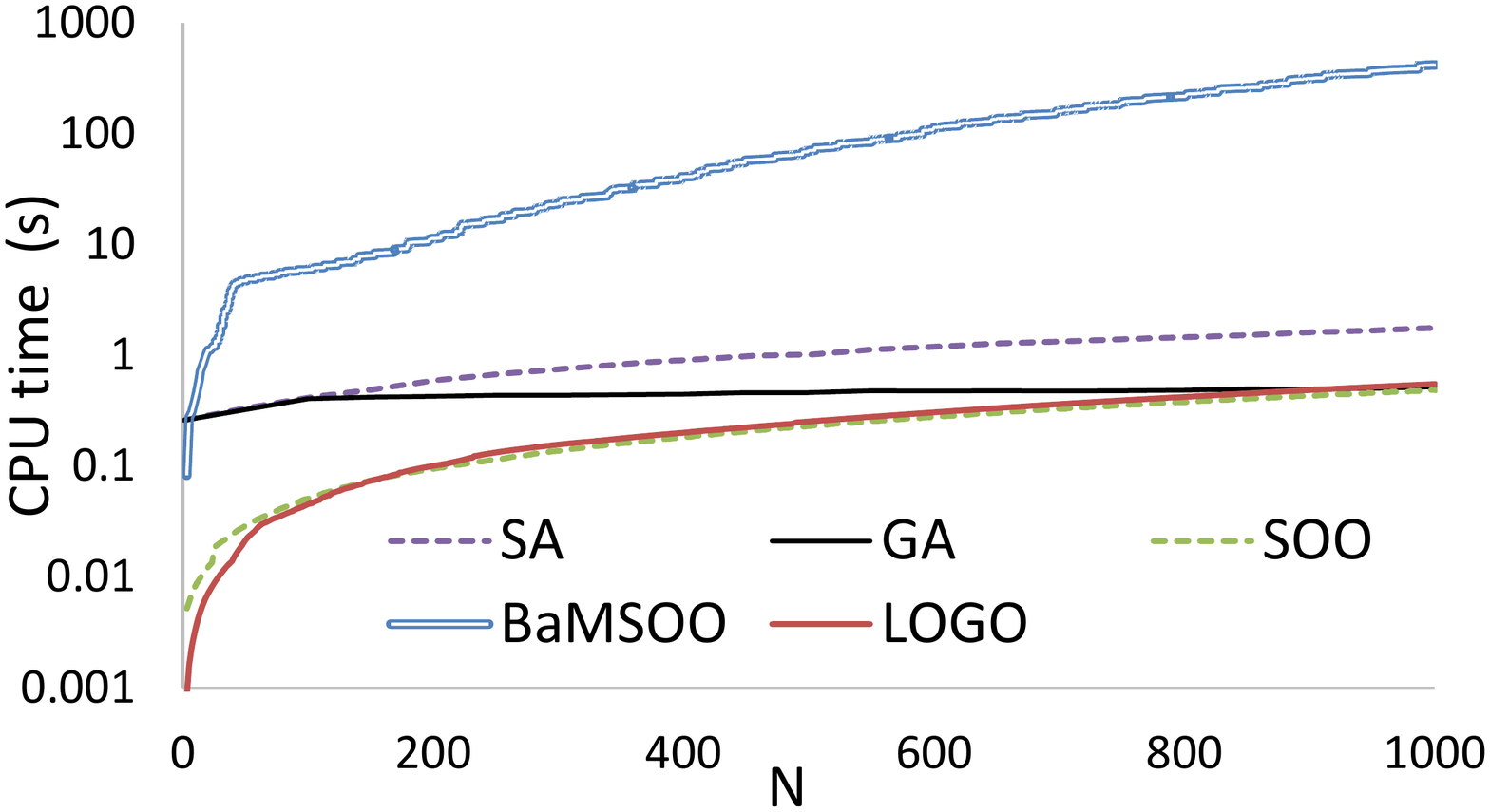}
\caption{Peaks}
\end{subfigure}
\vskip3mm
\begin{subfigure}[b]{0.32\textwidth}
\includegraphics[width=\textwidth]{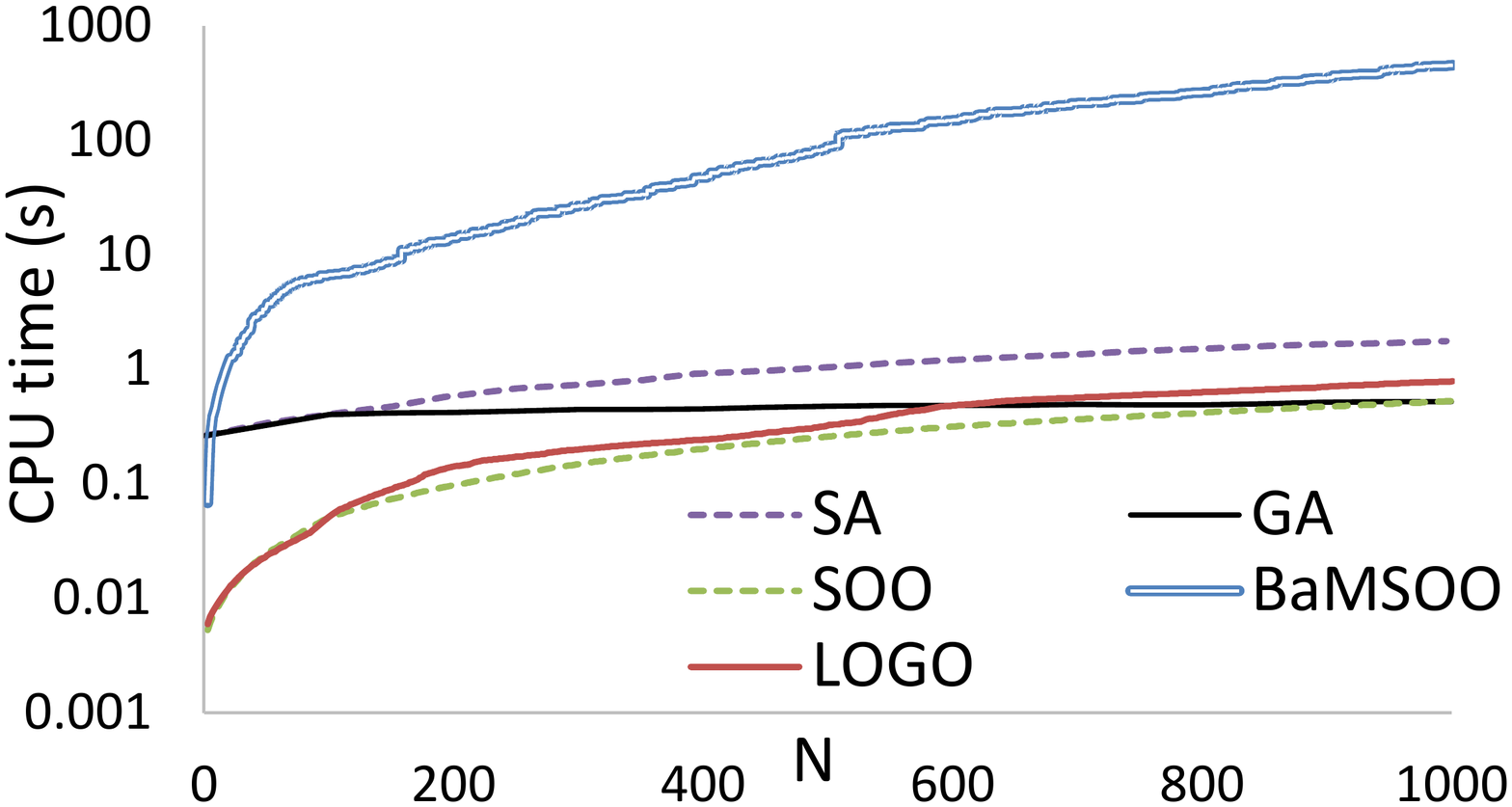}
\caption{Branin}
\end{subfigure}\hfill
\begin{subfigure}[b]{0.32\textwidth}
\includegraphics[width=\textwidth]{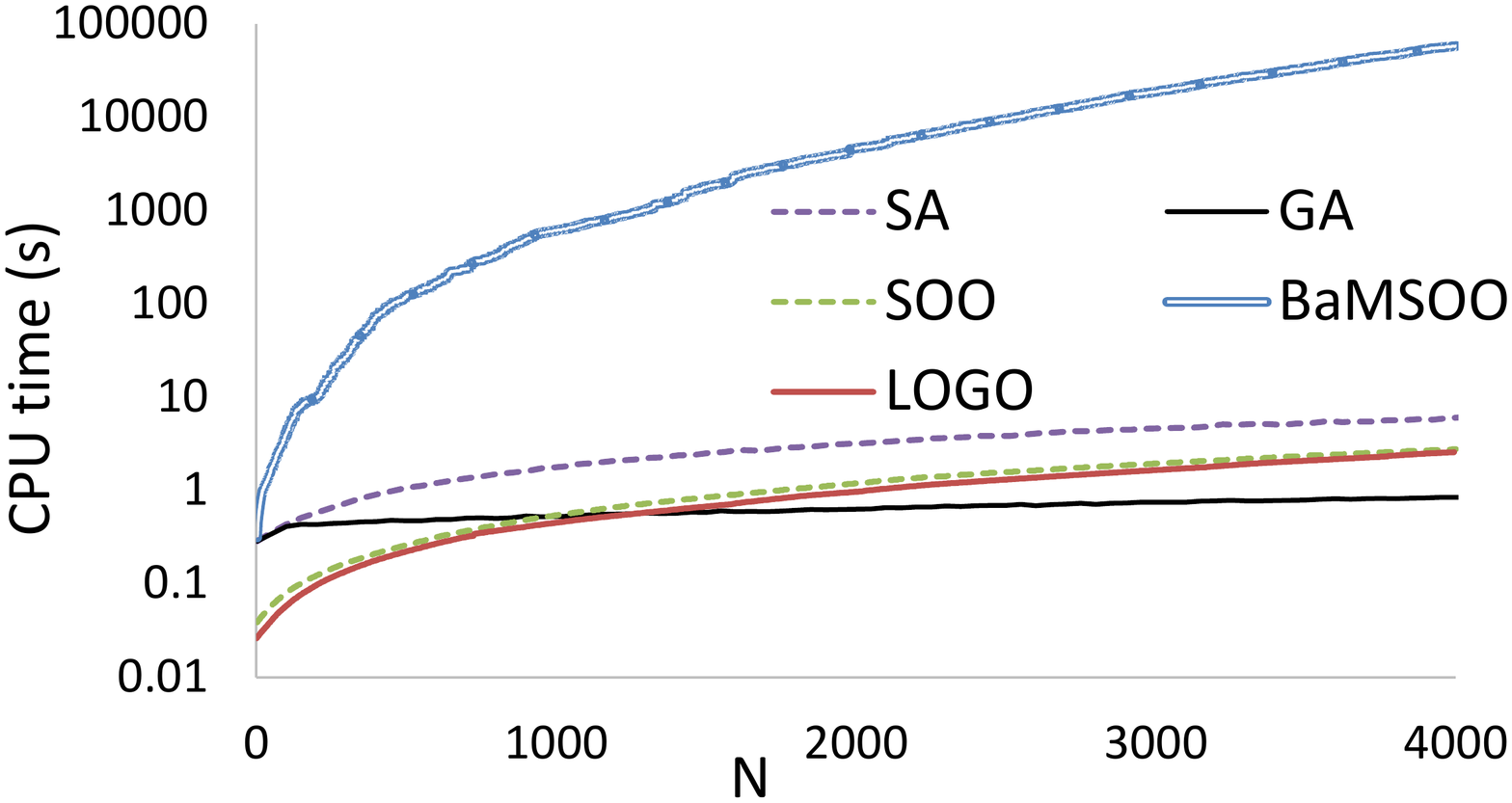}
\caption{Rosenbrock 2}
\end{subfigure}\hfill
\begin{subfigure}[b]{0.32\textwidth}
\includegraphics[width=\textwidth]{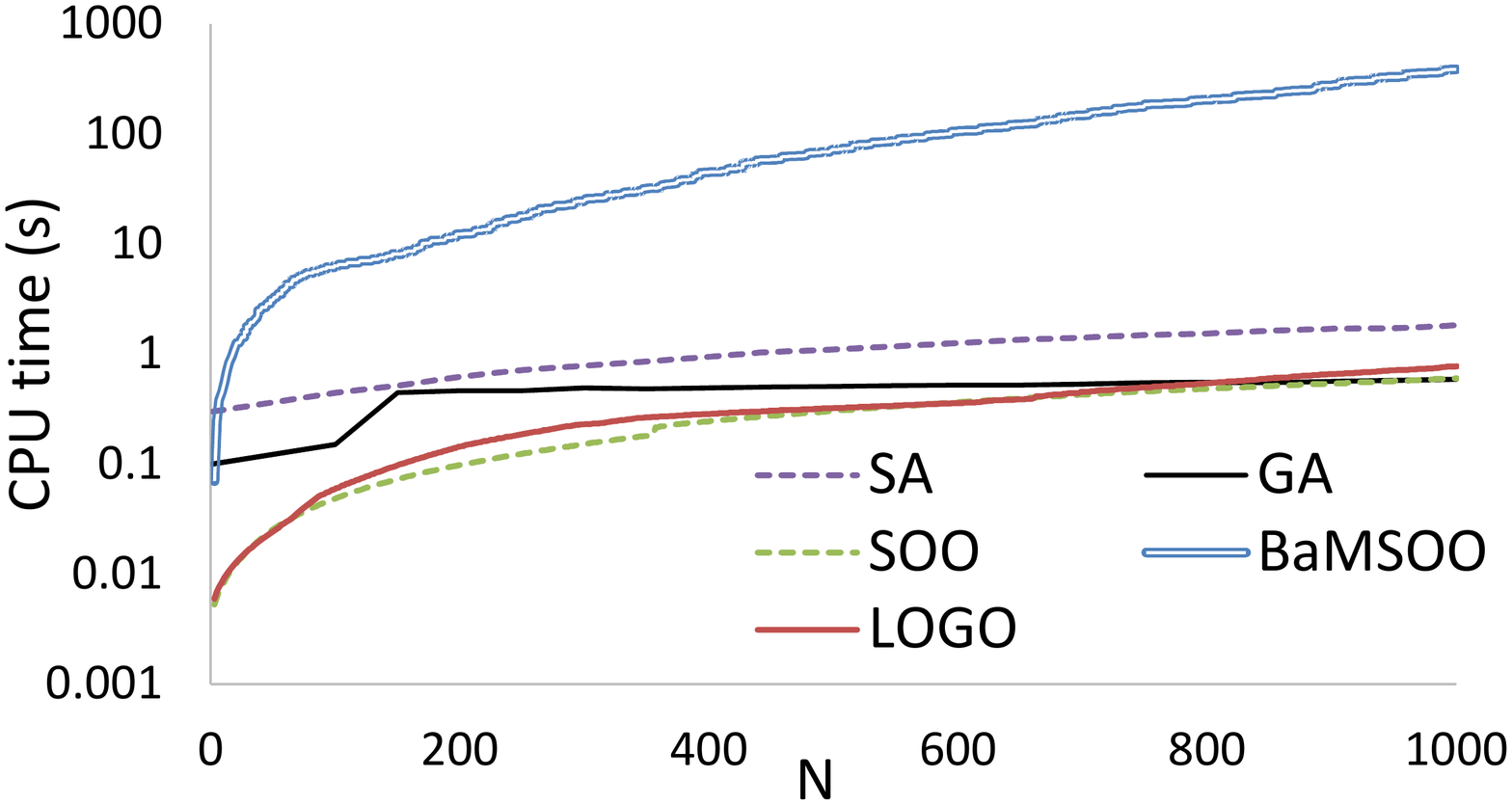}
\caption{Hartman 3}
\end{subfigure}
\vskip3mm
\begin{subfigure}[b]{0.32\textwidth}
\includegraphics[width=\textwidth]{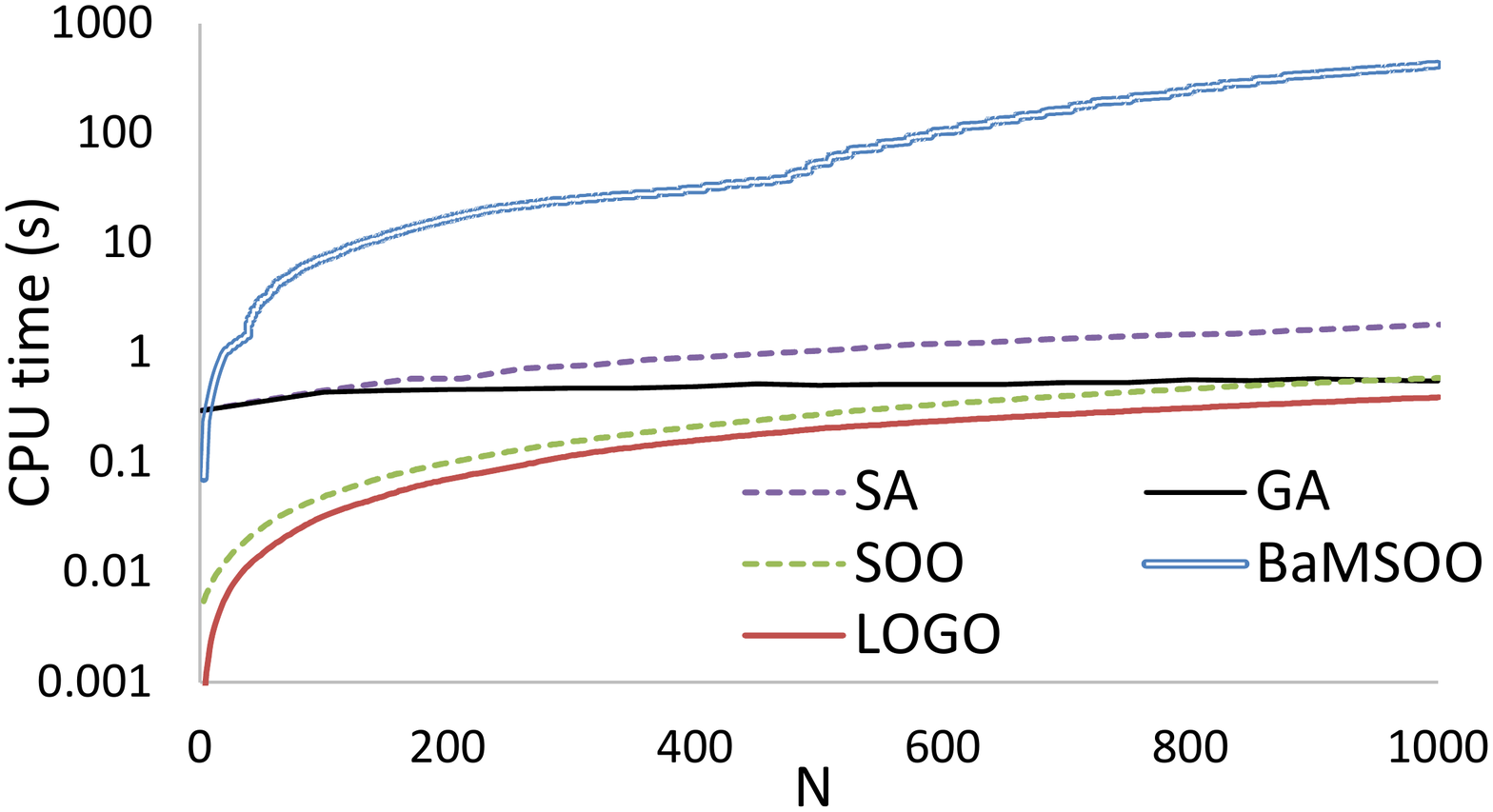}
\caption{Shekel 5}
\end{subfigure}\hfill
\begin{subfigure}[b]{0.32\textwidth}
\includegraphics[width=\textwidth]{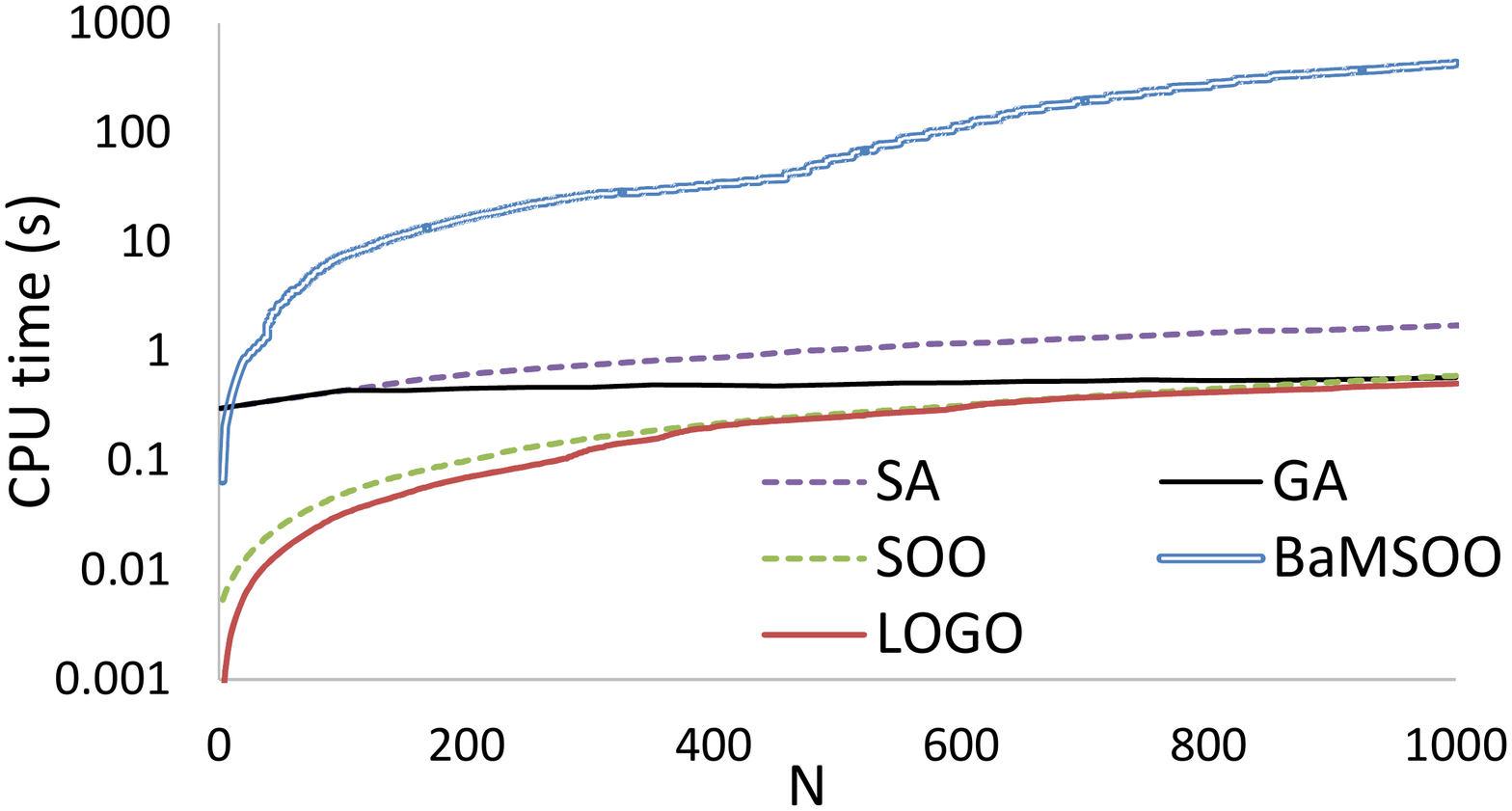}
\caption{Shekel 7}
\end{subfigure}\hfill
\begin{subfigure}[b]{0.32\textwidth}
\includegraphics[width=\textwidth]{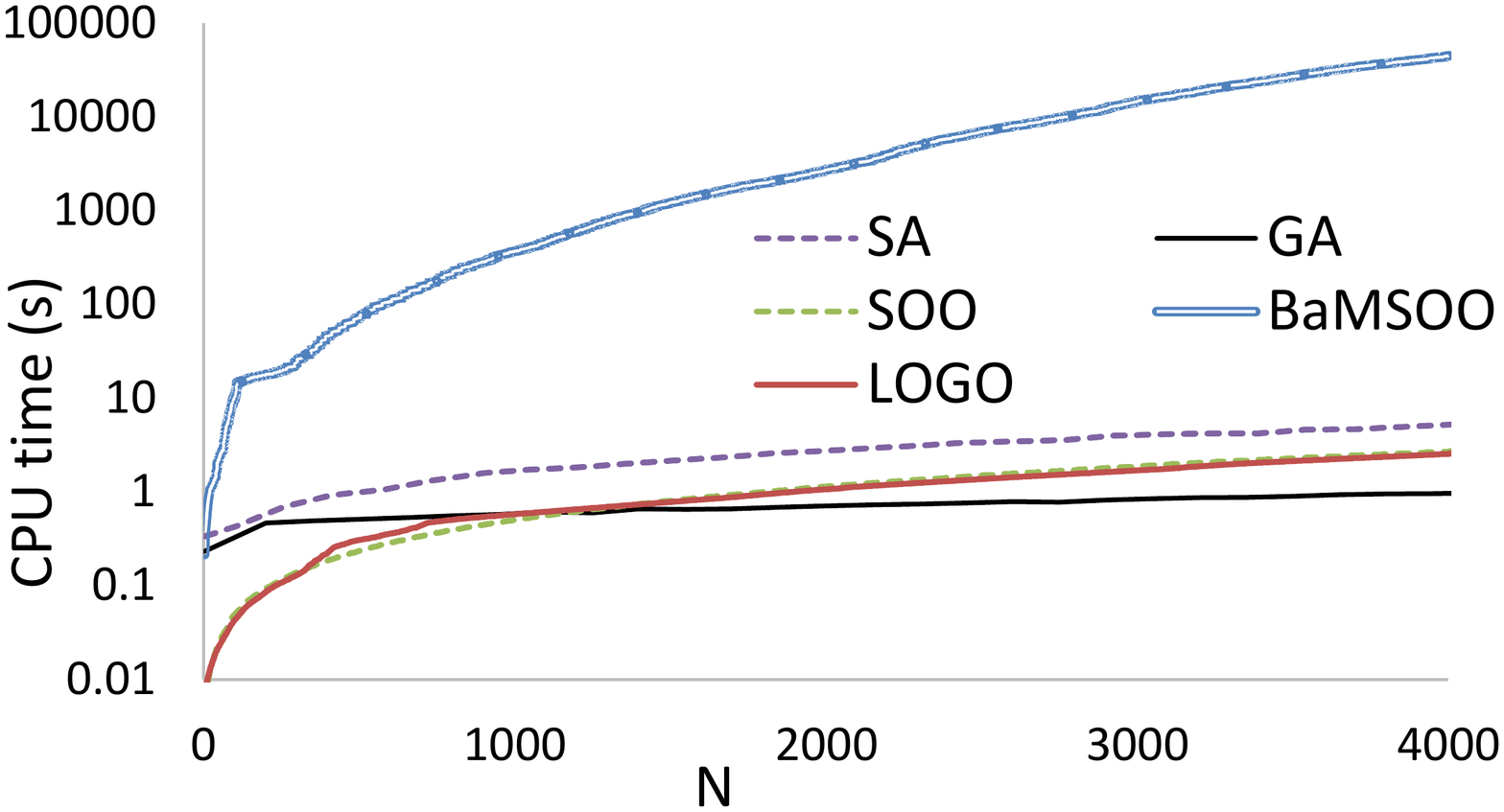}
\caption{Shekel 10}
\end{subfigure}
\vskip3mm
\begin{subfigure}[b]{0.32\textwidth}
\includegraphics[width=\textwidth]{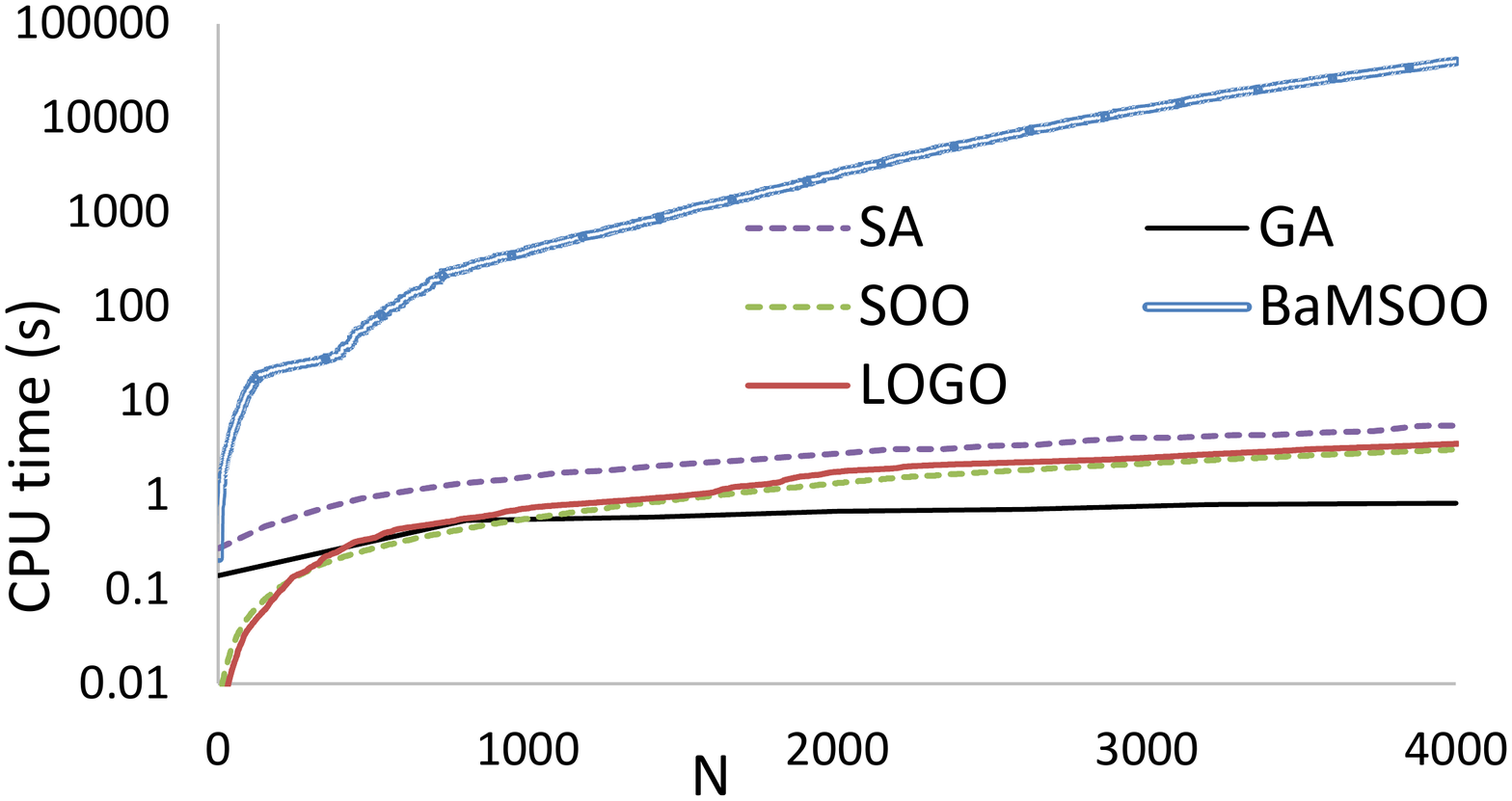}
\caption{Hartman 6}
\end{subfigure}\hfill
\begin{subfigure}[b]{0.32\textwidth}
\includegraphics[width=\textwidth]{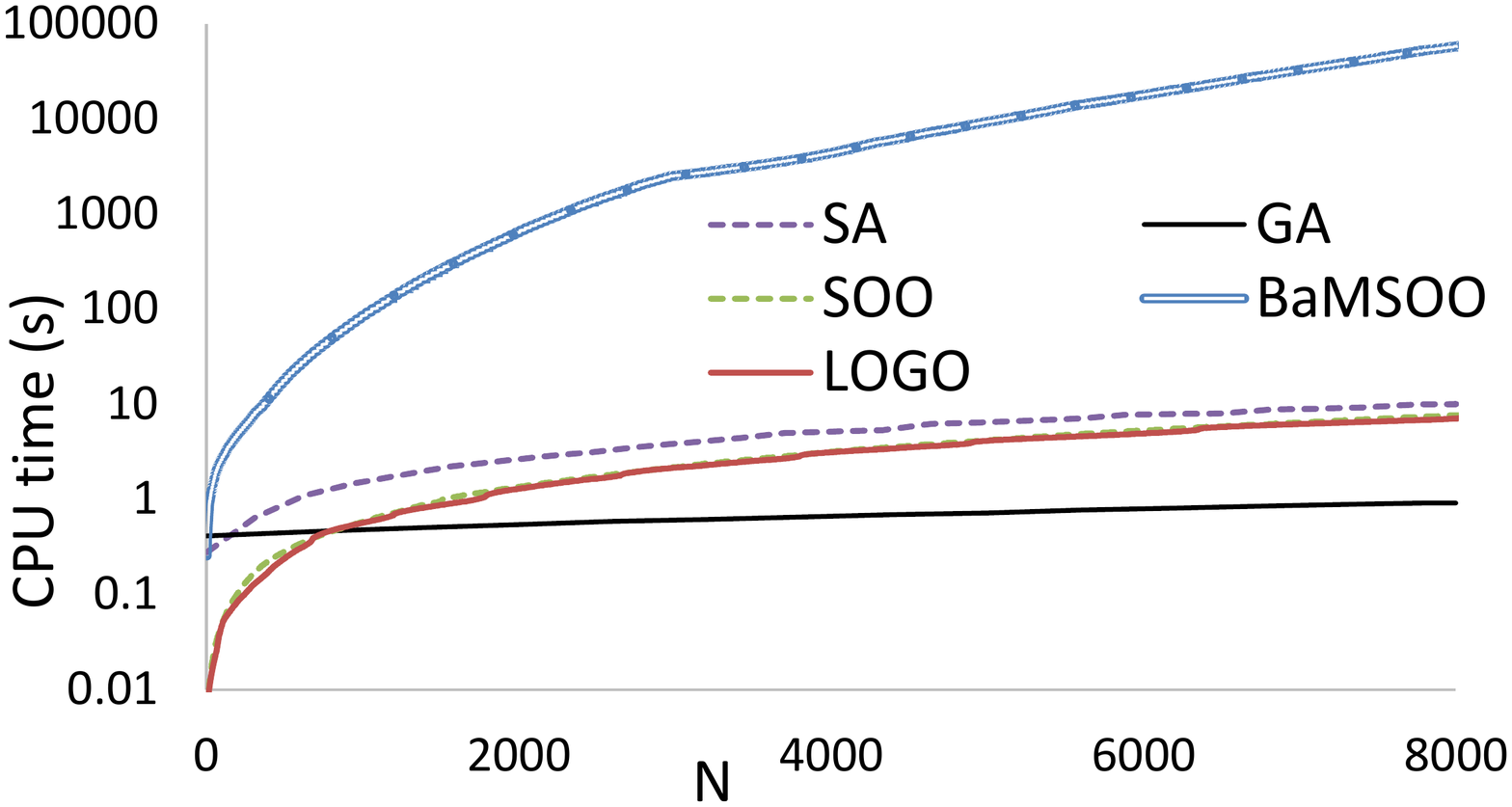}
\caption{Rosenbrock 10}
\end{subfigure}\hfill
\rule{0.32\textwidth}{0pt}
\caption{CPU time comparison: CPU time required to achieve the performance
indicated in Figure~\ref{fig5}}\label{fig6}
\end{figure}

As illustrated in Figure~\ref{fig5}, the LOGO algorithm generally
delivered improved performance compared to the other algorithms. A
particularly impressive result for the LOGO algorithm was its robustness
for the more challenging functions, Shekel~10 and Rosenbrok~10. The
function Shekel~$m$ has $m$ local optimizers and the slope of the surface
generally becomes larger as $m$ increases. Therefore, Shekel~10 and
Rosenbrok~10, which have $10$-dimensionality, are generally more difficult
functions when compared with the others in our experiment. Indeed,
only the LOGO algorithm achieved acceptable performance on these. From
Figure~\ref{fig6}, we can see that the LOGO algorithm and the SOO
algorithm were fast. The LOGO algorithm was often marginally slower
than the SOO algorithm owing to the additional computation required to
maintain the supersets. The reason why the BaMSOO algorithm required
a large computational cost at some horizontal axis points is that it
continued skipping to conduct the function evaluations (because the
evaluations were judged to be not beneficial based on GP). This is an
effective mechanism of BaMSOO to avoid wasteful function evaluations;
however, one must be careful to make sure that the function evaluations
are costly, relative to this mechanism.

In summary, compared to the BaMSOO algorithm, the LOGO algorithm was
faster and considerably simpler (in both implementation and parameter
selection) and had stronger theoretical bases while delivering superior
performance in the experiments. When compared with the SOO algorithm,
the LOGO algorithm decreased the theoretical convergence rate in the worst
case analysis, but exhibited significant improvements in the experiments.

\begin{figure}
\centering
\hair2pt
\begin{subfigure}[b]{0.49\textwidth}
\includegraphics[width=\textwidth]{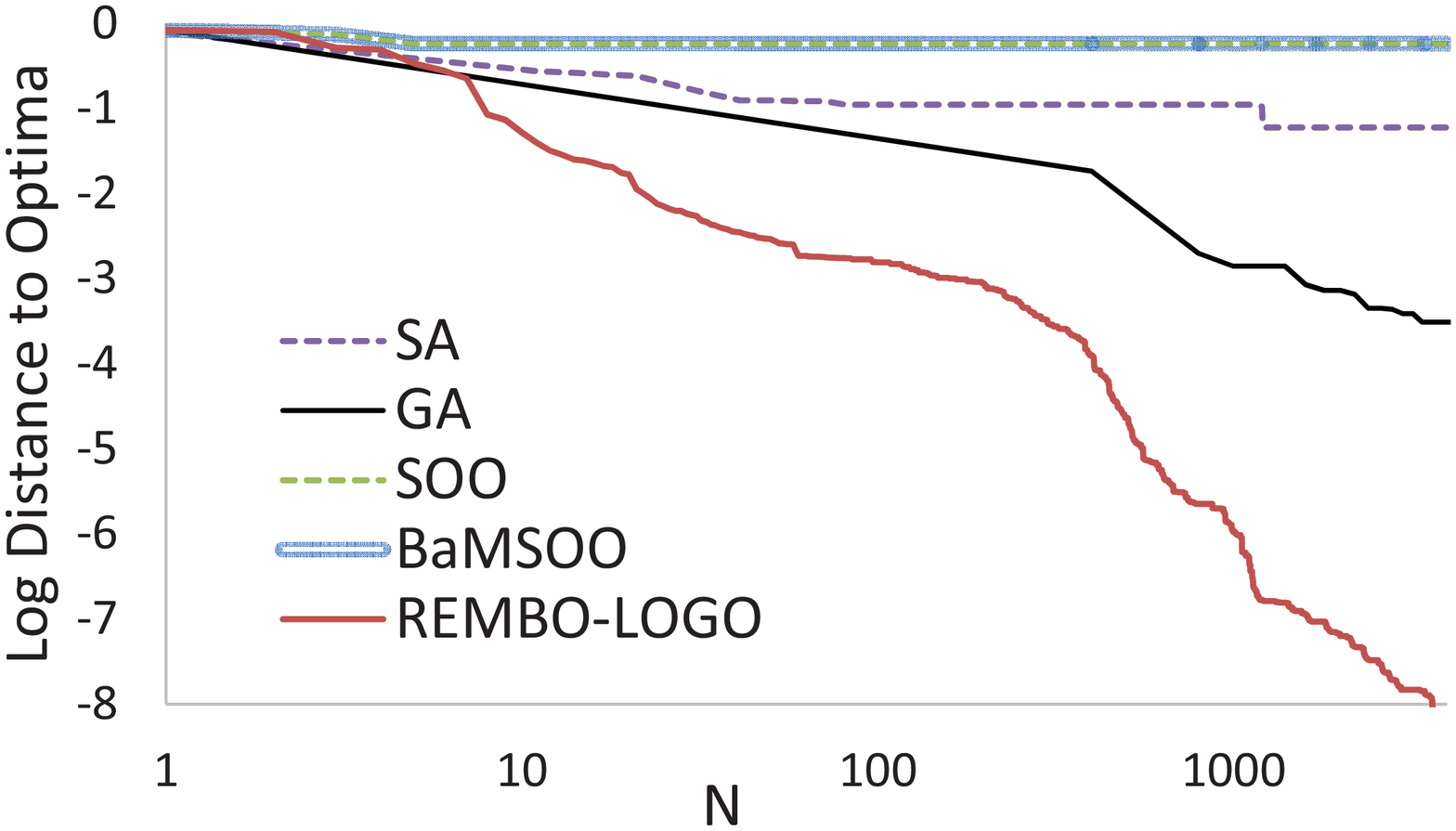}
\caption{ Scalability: a 1000-dimensional function} \label{fig7a}
\end{subfigure}
\labellist
\pinlabel \rotatebox{90}{\scriptsize Log Distance to Optimal \normalsize} [r] at 12 180
\pinlabel {$N$} [t] at 270 25
\endlabellist
\begin{subfigure}[b]{0.49\textwidth}
\includegraphics[width=\textwidth]{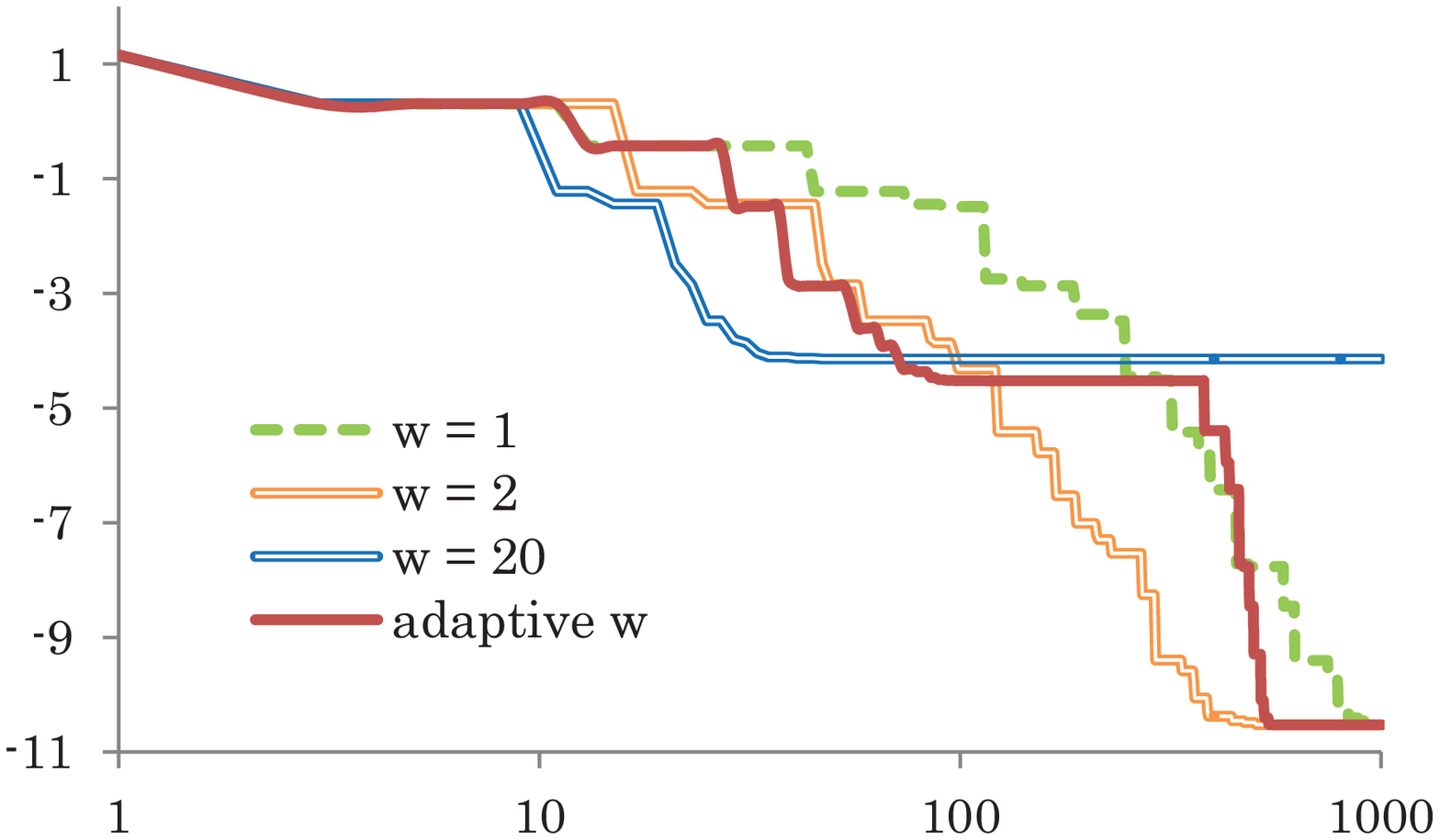}
\caption{Sensitivity to local bias parameter $w$ }\label{fig7b}
\end{subfigure}
\caption{On the current possible limitations of LOGO}\label{fig7}
\end{figure}

Now that we have confirmed the advantages of the LOGO algorithm, we discuss its possible limitations: scalability and parameter sensitivity. The scalability for high dimensions   is a challenge for non-convex optimization in general as the  search space grows exponentially in space. However, we may achieve the scalability by leveraging additional structures of the objective function that are present for some applications. For example, \citeauthor{kawaguchi2016deep} \citeyear{kawaguchi2016deep} showed an instance of deep learning models, in which the objective function has such an additional structure: the nonexistence of poor local minima. As an illustration, we combine LOGO with a random embedding method, REMBO \cite{wang2013bayesian}, to account for another structure: a low effective dimensionality. In Figure \ref{fig7} (a), we report the algorithms' performances for a 1000 dimensional  function: Sin 2  embedded in 1000 dimensions in the same manner described in Section 4.1 in the previous study \cite{wang2013bayesian}.        

Another possible limitation of LOGO is the sensitivity of its performance
to the free parameter $w$. Even though we provided theoretical analysis
and insight on the effect of the parameter value in the previous section,
it is yet unclear how to set $w$ in a principle manner. We illustrate
this current limitation in Figure~\ref{fig7} (b). The result labeled with ``adaptive w'' indicates the result  with the fixed adaptive mechanisms of \(w\) that we use in all the other experiments except ones in Figure 7 (b) and 8. In the illustration, we
use the Branin function because the experiment conducted with it clearly
illustrated the limitation. As can be seen in the figure, the performance
in the early stage is always improved as $w$ increases because the
algorithm finds a local optimum faster with higher $w$. However, if $w$
is too large, such as $w = 20$ in the figure, the algorithm gets stuck at
the local optimum for a long time. Thus, the best value (or sweet spot)
exists between too large and too small values of $w$. In the results of
this experiment, it can be seen that the choice of $w = 2$ is the best,
which finds the global optima with high precision within only $200$
function evaluations.

However, this limitation would not be a serious problem in practice for
the following four reasons. First, a similar limitation exists, to the
best of our knowledge, for any algorithms that are successfully used
in practice (e.g., simulated annealing, genetic algorithm, swarm-based
optimization, the DIRECT algorithm, and Bayesian optimization). Second,
unlike any other previous algorithm, the finite-time loss bound
always applies even for a bad choice of $w$. Third, we demonstrated in
the previous experiments that a very simple adaptive rule may suffice
to produce a good result. Also, future work may further mitigate this
limitation by developing different methods to adaptively determine
the value of $w$. Also, another possibility would be to conduct optimization over \(w\) with a cheaper surrogate model. 
Finally, the limitation
may not apply to some of the target objective functions at all.

For the fourth and final reason, recall that we speculated in the
algorithm's analysis that increasing $w$ would always have beneficial
effects in some problems, as illustrated in Figure~\ref{fig4}. Clearly,
any problems within the scope of local optimization fall into this
category. In Figure~\ref{fig8}, we show a rather unobvious instance
of such problems, and thus an example, for which the limitation of the
parameter sensitivity does not apply. As can be seen in the diagram on the
left in Figure~\ref{fig8}, this test function has many local optima, only
one of which is the global optimum. Nevertheless, as in the diagram on
the right, the performance of the LOGO algorithm improves as $w$ increases,
with no harmful effect.

\begin{figure}[ht]
\centering
\small\hair2pt
\labellist
\pinlabel $f$ [r] at -200 600
\pinlabel $x_1$ [tr] at 0 300
\pinlabel $x_2$ [t] at 400 330
\endlabellist
\includegraphics[width=0.47\textwidth]{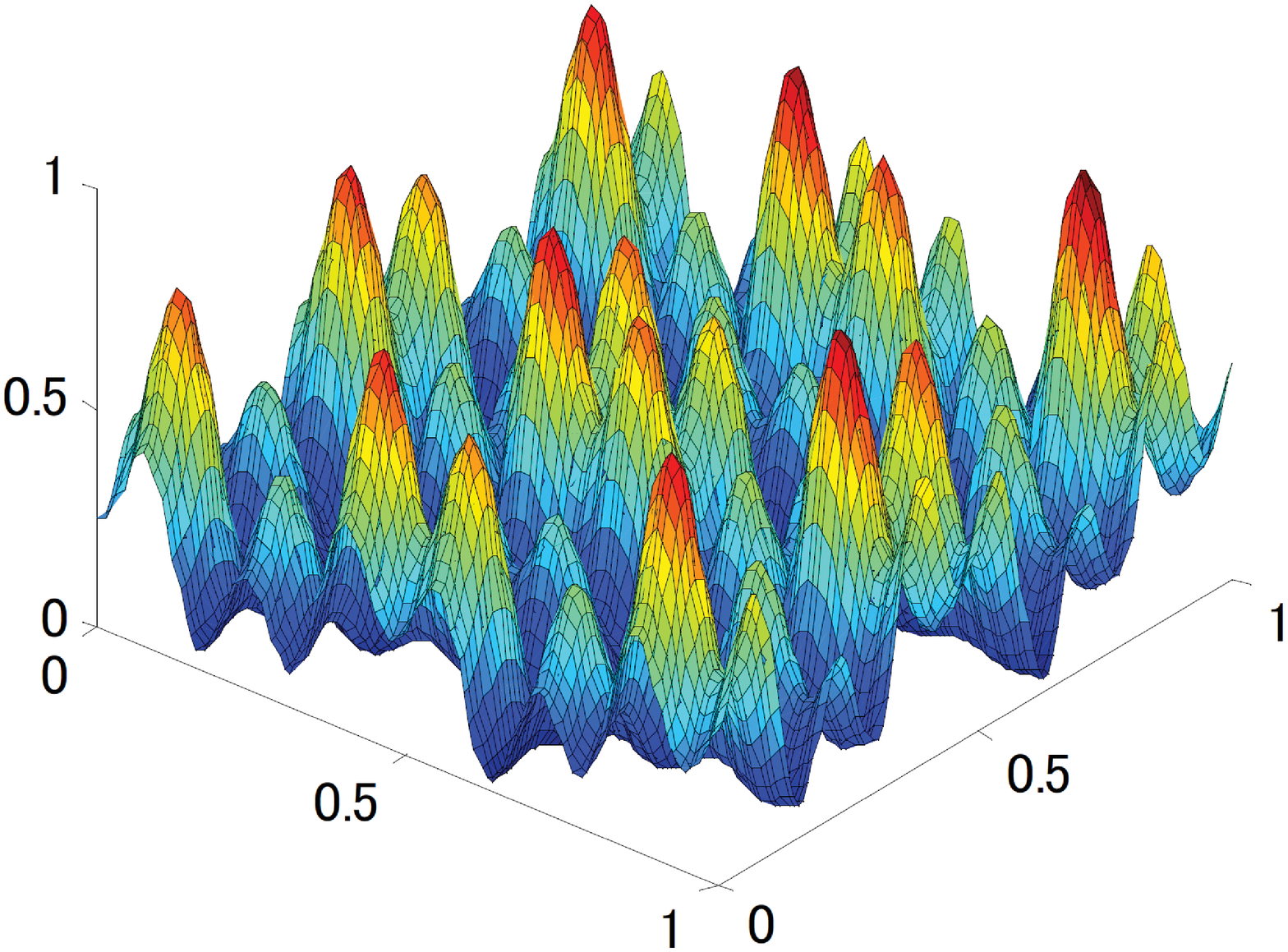}\hfill
\small\hair2pt
\labellist
\pinlabel \rotatebox{90}{Log Distance to Optimal} [r] at -160 420
\pinlabel {Local Bias Parameter: $w$} [t] at 230 142
\endlabellist
\includegraphics[width=0.44\textwidth]{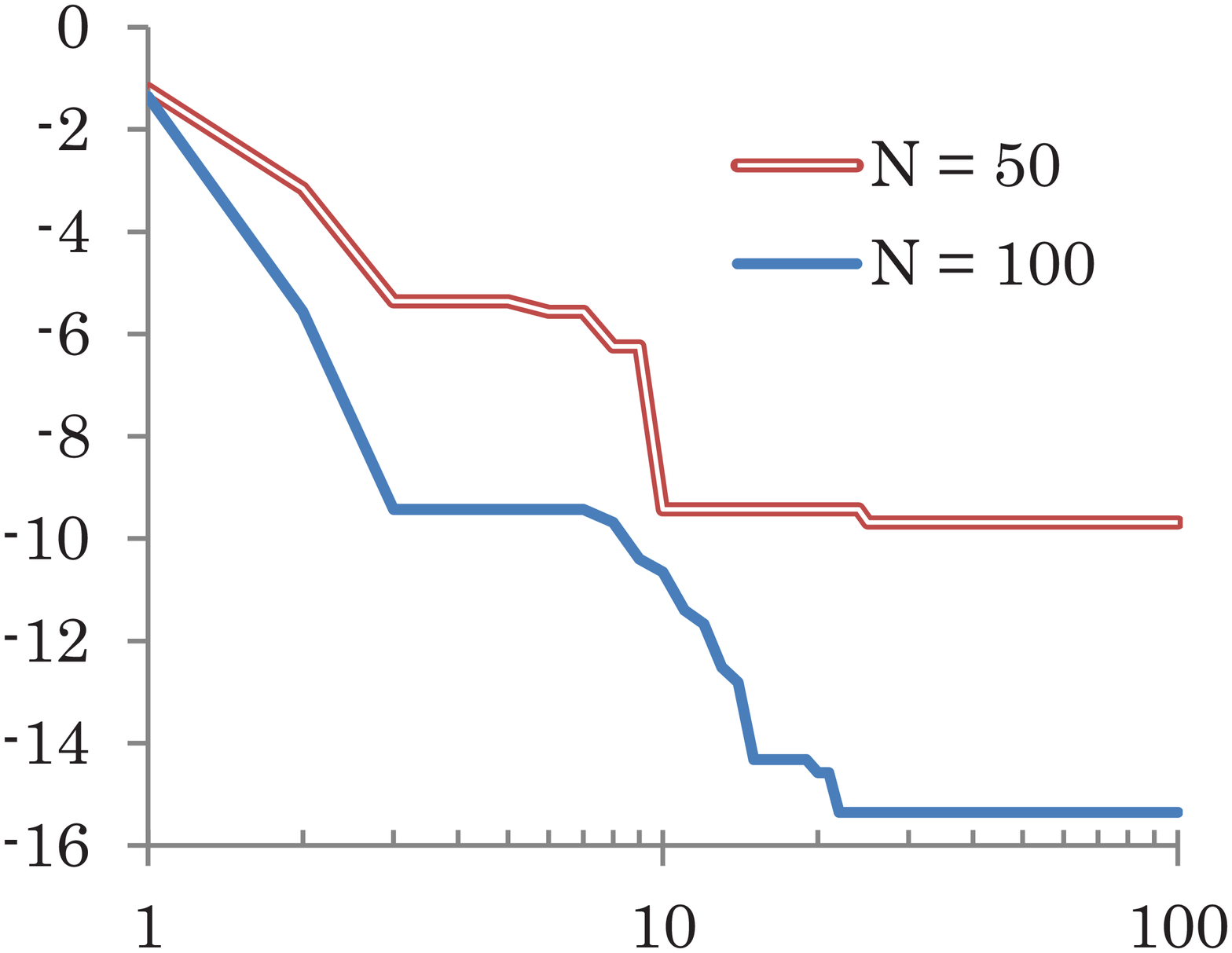}
\vskip3mm
\caption{An example of problems where increasing $w$ is always
better. The diagram on the left shows the objective function, and the
diagram on the right presents the performance at $N = 50$ and $100$
for each $w$.}\label{fig8}
\end{figure}

\section{Planning with LOGO via Policy Search}\label{sec3}

We now apply the LOGO algorithm to planning, which is an important area
in the field of AI. The goal of our planning problem is to find an action
sequence that maximizes the total return over the infinite discounted
horizon or finite horizon (unlike classical planning problem, we do not
consider constraints that specify the \emph{goal} state). In this paper,
we discuss the formulations for the case of the infinite discounted
horizon, but all the arguments are applicable to the case of a finite
horizon with straightforward modifications. We consider the case where
the state/action space is continuous, the planning horizon is long,
and the transition and reward functions are known and deterministic.

The planning problem can be formulated as
follows. Let $S\subset\R^{D_S}$ be a set of
states, $A\subset\R^{D_A}$ be a set of actions,
$T\colon\R^{D_S} \to \R^{D_S}$ be a transition function,
$R\colon\R^{D_S}\times\R^{D_A}\to\R$ be a return
or reward function, and $\gamma\le 1$ be a discount factor. A planner
considers to take an action $a \in A$ in a state $s \in S$, which triggers
a transition to another state based on the transition function $T$,
while receiving a return based on the reward function $R$. The discount
factor $\gamma$ discounts the future rewards to fulfill either or both
of the following two roles: accounting for the relative importance of
immediate rewards compared to future rewards, and obviating the need
to think ahead toward the infinite horizon. An action sequence can be
represented by a policy $\pi$ that maps the state space to the action
space: $\pi\colon\R^{D_S} \to \R^{D_A}$.

The value of an action sequence or a policy $\pi$, $V^\pi$, is the sum
of the rewards over the infinite discounted horizon, which is
\[
V^\pi(s_0)=\sum_{j=0}^\infty\gb^t R(s_j,\pi(s_j)).
\]
The value of a policy can be also written with a recursive form as
\begin{equation}\label{eq3}
V^\pi(s)=R(s,\pi(s)) + \gb V^\pi\bigl(T(s,\pi(s))\bigr).
\end{equation}
Here, we are interested in finding the optimal policy $\pi^*$. In
the dynamic programing approach, we can compute the optimal policy,
by solving the following Bellman's optimality equation:
\begin{equation}\label{eq4}
V^*(s)=\max_a R(s,a) + \gb V^*(T(s,a))
\end{equation}
where $V^*$ is the value of the optimal policy. In Equation~\eqref{eq4},
the optimal policy $\pi^*$ is the set of the actions defined by the
max. A major problem with this approach is that the efficiency of the
computation depends on the size of the state space. In a real-world
application, the state space is usually very large or continuous, which
often makes it impractical to solve Equation~\eqref{eq4}.

A successful approach to avoid the state size dependency is to focus
only on the state space that is reachable from the current state within
the planning time horizon. In this way, even with an infinitely large
state space, a planner only needs to consider a finitely sized subset
of the space. This approach is called \emph{local planning}. Unlike
local optimization vs.\ global optimization, the optimal solution of
local planning is indeed globally optimal, given the initial state. It
is called local planning because it does not cover all the states and
its solution changes for different initial states. Accordingly, as the
initial state changes, a planner may need to conduct re-planning.

A natural way to solve local planning is to use tree search
methods, which construct a tree rooted in an initial state toward the
future possible states in the depth of the planning horizon. This tree
search can be conducted using any traditional search method, including
both uninformed search (e.g., breadth-first and depth-first search) and
informed (heuristic) search (e.g., $\rm A^*$ search). Also, recent studies
have developed several tree-based algorithms that are specialized to
local planning. Among those, the SOO algorithm, the direct predecessor of
the LOGO algorithm, was applied to local planning with the tree search
approach \cite{bus13}. Most of the new algorithms, for example, HOLOP
\cite{bub10,wei12}, operate with stochastic transition functions.

However efficient these proposed algorithms are, the search space in the
tree search approach grows exponentially in the planning time horizon,
$H$. Therefore, local planning with the tree search approach would not work
well with a very long time horizon. In some applications, a small $H$
is justified, but in other applications, it is not. If an application
problem requires a long time tradeoff between immediate and future
rewards, then the tree search approach would be impractical. Here,
we are motivated to solve such a real-world application, and therefore
need another approach.

In this paper, we consider policy search \cite{dei13} as an effective
alternative to solve the planning problem with continuous state/action
space and with a long time horizon. Policy search is a form of local
planning. Thus, like the tree search approach,
it operates even with infinitely large or continuous state space. In
addition, unlike the tree search approach, policy search significantly
reduces the search space by naturally integrating the domain-specific
expert knowledge into the structure of the policy. More concretely,
the search space of policy search is a set of policies $\{\pi_x :
x \in \W\}$, which are parameterized by a vector $x$ in $\W\subset
\R^D$. Therefore, the search space is no longer dependent on the
planning time horizon $H$, the state space $S$, nor the action space
$A$, but only on the parameter space $\W$. Here, parameter space $\W$
can be determined by expert knowledge, which can significantly reduce
the search space.

We use the regret $r_m$ as the measure of the policy search algorithm's
performance:
\[
r_m = V^{\pi_x^*}(s_0) - V^{\pi_x^+(m)}(s_0)
\]
where $\pi_x^*$ is the optimal policy in the given set of policies
$\{\pi_x : x \in \W\}$, and $\pi_x^+(m)$ is the best policy found by an
algorithm after the $m$ steps of planning. An evaluation of each policy
takes $mH$ steps if we consider a fixed planning horizon $H$. Here,
$\pi_x^*$ may differ from the optimal policy $\pi^*$ when $\pi^*$ is
not covered in the set $\{\pi_x : x \in\W\}$.

The policy search approach is usually adopted with gradient methods
\cite{bax01,kob13,wei14}. While a gradient method is fast, it converges
to local optima \cite{sut99}. Further, it has been observed that it
may result in a mere random walk when large plateaus exist in the
surface of the policy space \cite{hei08}. Clearly, these problems
can be resolved using global optimization methods at the cost of scalability
\cite{bro09,aza14}.  Unlike previous policy search methods, our method guarantees finite-time regret bounds w.r.t.\ global optima in $\{\pi_x : x \in\W\}$ without strong additional assumption, and provides a practically useful convergence speed.

\subsection{LOGO-OP Algorithm: Leverage (Unknown) Smoothness in Both
Policy Space and Planning Horizon}\label{sec3.1}

In this section, we present a simple modification of the LOGO algorithm
to leverage not only the unknown smoothness in policy space but also the
known smoothness over the planning horizon. The former is accomplished
by the direct application of the LOGO algorithm to policy search, and
the latter is what the modification in this section aims to do without
losing the advantage of the original LOGO algorithm. We call the modified
version, Locally Oriented Global Optimization with Optimism in Planning
horizon (LOGO-OP).  As a result of this modification, we add a new free
parameter $L$.

The pseudocode for the LOGO-OP algorithm is provided in
Algorithm~\ref{alg2}. By comparing Algorithms~\ref{alg1} and \ref{alg2},
it can be seen that the LOGO-OP algorithm functions in the same manner
as the LOGO algorithm, except for line 15 (the function evaluation or,
equivalently, the policy evaluation in the policy search) and line
20. Notice that the LOGO algorithm is directly applicable to the policy
search by considering $V$ to be $f$ in Algorithm~\ref{alg1}. While
the LOGO algorithm does not assume the structure of the function $f$,
the LOGO-OP algorithm functions with and exploits the given structure
of the value function $V$ (i.e., MDP model). The algorithm functions
as follows. The policy evaluation is performed for each policy $\pi_x$
with a parameter $x$ specified by each of the two new hyperrectangles
(from line 15-1 to \hbox{15-11}). Given the initial condition $s_0 \in S$,
the transition function $T$, the reward function~$R$, a~discount factor
$\gamma \le 1$, and the policy $\pi_x$, the algorithm computes the value
of the policy as in Equation~\eqref{eq3} (from line 15-2 to line 15-10,
except line 15-6).

The main modification appears in line 15-6 where the algorithm leverages
the known smoothness over the planning horizon. Remember that \emph{the
unknown smoothness in policy space} (or input space $x$) is specified
as $f(x^*)-f(x)\le \sem(x,x^*)$ (from Assumption~\ref{ass1}) and thus
it infers the upper bound of the value of a policy \emph{that is not
yet evaluated but similar (close in policy space w.r.t.\ $\sem$) to
already evaluated polices}. Conversely, \emph{the known smoothness
over the planning horizon} renders the upper bound on the value of a
policy \emph{while the particular policy is being evaluated}. That is,
the known smoothness over the planning horizon can be written as
\[
\sum_{j=0}^\infty \gb^j R(s_j,\pi_x(s_j)) - \sum_{j=0}^t \gb^j
R(s_j,\pi_x(s_j))\le \frac{\gb^{t+1}}{1-\gb}\Rmax
\]
where $0\le t\le\infty$ is a arbitrary point in the planning horizon as
in line 15-3 and $\Rmax$ is the maximum reward. This known smoothness is
due to the definition of $\Rmax$ and the sum of a geometric series. In
the case of the finite horizon with $H$, we have the same formula with
$(\gb^t / (1-\gb))\Rmax$ being replaced by $(H-t)\Rmax$. In line 15-6,
unlike the original LOGO algorithm, the LOGO-OP algorithm terminates the
evaluation of a policy when the continuation of evaluating the policy
is judged to be a misuse of the computational resources based on the
known smoothness over the planning horizon. Concretely, it terminates
the evaluation of a policy when the upper bound of the value of the
policy becomes less than $(V^+ - L)$, where $V^+$ is the value of the
best policy found thus far and $L$ is the algorithm's parameter.

When the upper bound of the value of policy becomes less than $V^+$,
the planner can know that the policy is not the best policy. Thus,
it is tempting to simply terminate the policy evaluation with this
criterion. However, the essence of the LOGO algorithm is the utilization
of the unknown smoothness embedded in the surface of the value function
in the policy space. In other words, the algorithm makes use of the
result of each policy evaluation, whether the policy is the best one or
not. Any interruption of the policy evaluation changes the shape of the
surface of the value function, which interferes with the mechanism of
the LOGO algorithm. Nevertheless, the some degree of the interruption
is likely to be beneficial since our goal is to find the optimal policy
instead of surface analysis.

\begin{algorithm}[t]
\caption{LOGO-OP algorithm}\label{alg2}
\expandafter\patchcmd\csname\string\algorithmic\endcsname 
{\labelwidth 1.2em}{\labelwidth6mm}{}{}
\begin{algorithmic}[1]
\makeatletter
\addtocounter{ALG@line}{-1} 
\makeatother
\State {\bf Inputs (problem):} the initial condition $s_0 \in S$,  
the transition function $T$, 
the reward function $R$, 
a discount factor $\gamma \le 1$
with convergence criteria (or finite horizon $H$), 
the policy space $\pi_x: x \in \W\subset \R^D$.
\State {\bf Inputs (parameter):} the search depth function $\hmax\colon\Z^+\to[1,\infty)$, 
the local weight $w\in\Z^+$, stopping condition, the maximum reward $\Rmax$, a parameter $L$.
\let\algln\alglinenumber
\algrenewcommand{\alglinenumber}[1]{\footnotesize 2--5:}
\State lines 2--5 are exactly the same as lines 2--5 in Algorithm~\ref{alg1}
\makeatletter
\addtocounter{ALG@line}{3}
\makeatother
\algrenewcommand{\alglinenumber}[1]{\footnotesize #1:}
\State Adds the initial hyperrectangle $\W'$ to the set: $\psi_0\leftarrow\psi_0\cup\{\W'\}$ (i.e., $\omega_{0,0} = \W'$)
\State Evaluate the value function $V$ at the center point of $\W'$, $c_{0,0}$: $\val[\omega_{0,0}]\leftarrow V(c_{0,0})$, 
$V^+\leftarrow\val[\omega_{0,0}]$
\For{iteration ${}= 1, 2, 3,\dots$}
\State $\valmax\leftarrow -\infty$, $h_\mathit{plus}\leftarrow h_\mathit{upper}$
\For{$k=0,1,2,\dots,\max(\floor{\min(\hmax(n),h_\mathit{upper})/w},h_\mathit{plus})$}
\State \Select\ a hyperrectangle to be divided: $(h,i)\in\arg\max_{h,i}\val[\omega_{h,i}]$ for $h,i : \omega_{h,i}\in\Psi_k$
\If{$\val[\omega_{h,i}]>\valmax$}
\State $\valmax\leftarrow\val[\omega_{h,i}]$, $h_\mathit{plus}\leftarrow 0$, 
$h_\mathit{upper}\leftarrow\max(h_\mathit{upper},h+1)$, $n\leftarrow n+1$
\State \Divide\ this hyperrectangle $\omega_{h,i}$ along the longest coordinate direction
\Statex \qquad\qquad\quad\begin{tabular}[t]{l}
- three smaller hyperrectangles are created $\rightarrow$ $\omega_\mathit{left}$, $\omega_\mathit{center}$, $\omega_\mathit{right}$\\
- $\val[\omega_\mathit{center}]\leftarrow\val[\omega_{h,i}]$
\end{tabular}
\color{blue!30!black}
\State \Evaluate\ the value function $V$ at the center points of the
two new hyperrectangles:
\newcounter{foo}
\makeatletter
\setcounter{foo}{\value{ALG@line}}
\setcounter{ALG@line}{0}
\algrenewcommand{\alglinenumber}[1]{\footnotesize\hskip1.3cm \llap{\arabic{foo}--\arabic{ALG@line}}:}
\makeatother
\For{each policy $\pi_x$ corresponding $c_{\omega_\mathit{left}}$ and $c_{\omega_\mathit{right}}$}
\State $z_1\leftarrow0$, $z_2\leftarrow1$, $s\leftarrow s_0$
\For{$t=0,1,2,\dots$,}
\State $z_1\leftarrow z_1+z_2R(s,\pi_x(s))$
\State $z_2\leftarrow \gb z_2$, $s\leftarrow T(s,\pi_x(s))$           ,
\State {\bf if} $z_1 + (\gb^{t+1} / (1-\gb))\Rmax < (V^+ - L)$ {\bf then Exit loop}
\State {\bf if}  convergence criteria is met {\bf then Exit loop}
\EndFor
\State save $z_1$ as the value of the corresponding rectangle
\State $\val[\omega_\mathit{left}]\leftarrow z_1$ or $\val[\omega_\mathit{right}]\leftarrow z_1$
\EndFor
\State $V^+\leftarrow\max(V^+,\val[\omega_\mathit{left}],\val[\omega_\mathit{center}],\val[\omega_\mathit{right}])$
\makeatletter
\setcounter{ALG@line}{\value{foo}}
\makeatother
\algrenewcommand{\alglinenumber}[1]{\footnotesize #1:}
\color{black}
\State \Group\ the new hyperrectangles into the set $h + 1$ and
remove the original rectangle:
\begin{center}
$
\psi_{h+1}\leftarrow\psi_{h+1}
\cup\{\omega_\mathit{center},\omega_\mathit{left},\omega_\mathit{right}\},\quad
\psi_h\leftarrow\psi_h\setminus \omega_{h,i}
$
\end{center}
\EndIf
\State {\bf if} stopping condition is met {\bf then Return} $(h,i)=\arg\max_{h,i}\val[\omega_{h,i}]$
\EndFor
\color{blue!30!black}
\State {\bf for} all intervals $\omega$ with $\val[\omega]<(V^+-L)$ {\bf do} $\val[\omega]\leftarrow (V^+ - L)$
\color{black}
\EndFor
\end{algorithmic}
\end{algorithm}

The LOGO-OP algorithm uses $L$ to determine the degree of the
interruption. Because $V^+$ is monotonically increasing along the
execution, the value of a policy that is not fully evaluated owing to
line 15-6 in early iterations tends to be greater than the value of a
policy that is not fully evaluated in the later iterations. The algorithm
resolves this problem in line 20 such that it is not biased to divide
the interval evaluated in an early iteration.

With smaller $L$, the LOGO-OP algorithm can stop the evaluation of a
non-optimal policy earlier, at the cost of accuracy in the evaluation
of the value function's surface. With larger $L$, the algorithm needs to
spend more time on the evaluation of a non-optimal policy, but can obtain
a more accurate estimate of the value function's surface. In the regret
analysis, we show that a certain choice of $L$ ensures a tighter regret
bound when compared to the direct application of the LOGO algorithm.

\subsection{A Parallel Version of the LOGO-OP Algorithm}\label{sec3.2}

The LOGO-OP algorithm presented in Algorithm~\ref{alg2} has four main
procedures: \Select\ (line 11), \Divide\ (line 14), \Evaluate\ (line 15),
and \Group\ (line 16). A natural way to parallelize the algorithm
is to decouple \Select\ from the other three procedures. That is,
let the algorithm first \Select\ $z$ hyperrectangles to be divided,
and then allocate the $z$ number of \Divide, \Evaluate, and \Group\ to
$z$ parallel workers. However, this natural parallelization has data
dependency from one \Select\ to another \Select. In other words, the
procedure of the next \Select\ cannot start before \Divide, \Evaluate, and
\Group\ for the previous \Select\ are finalized. As a result, the parallel
overhead tends to be non-negligible. In addition, if \Select\ chooses less
hyperrectangles than parallel workers, then the available resources of
the parallel workers are wasted. Indeed, the latter problem was tackled
by creating multiple initial rectangles in a recent parallelization study
of the DIRECT algorithm \cite{he09}. While the use of multiple initial
rectangles can certainly mitigate the problem, it still allows the
occasional occurrence of the resource wastage, in addition to requiring
the user to specify the arrangement of the initial rectangles.

To solve these problems, we instead decouple the \Evaluate\ procedure
from the other three procedures and allocate only the \Evaluate\ task
to each parallel worker. We call the parallel version, the pLOGO-OP
algorithm. The algorithm uses one master process to conduct \Select,
\Divide, and \Group\ operations and an arbitrary number of parallel
workers to execute \Evaluate. The main idea is to temporarily use the
artificial value assignment to the center point of a hyperrectangle
in the master process, which is overwritten by the true value when the
parallel worker finishes evaluating the center point. With this strategy,
there is no data dependency and all the parallel workers are occupied with
tasks almost all the time. In this paper, we use the center value of the
original hyperrectangle before division as the temporary artificial value,
but the artificial value may be computed using a more advanced method
(e.g., methods in surface analysis) in future work. For the center point
of the initial hyperrectangle, we simply assign the worst possible value
(if we have no knowledge regarding the worst value, we can use $-\infty$).

The master process keeps selecting new hyperrectangles unless all the
parallel workers are occupied with tasks. This logic ensures that all
the parallel workers always have tasks assigned by the master process,
but the master process does not select too many hyperrectangles based on
the artificial information. Note that this parallelization makes sense
only when \Evaluate\ is the most time consuming procedure, and it is very
likely true for policy evaluation.

\subsection{Regret Analysis}\label{sec3.3}

Under a certain condition, all the finite-loss bounds of the LOGO
algorithm are directly translated to the regret bound of the LOGO-OP
algorithm. The condition that must be met is that $(V^+ - L)$ is
less than the center value of the optimal hyperinterval during the
algorithm's execution. We state the regret bound more concretely
below. For simplicity, we use the notion of a planning horizon $H$,
which is the effective (non-negligible) planning horizon for LOGO in
accordance with the discount factor, $\gamma$. Let $H'$ be the effective
planning horizon of the LOGO-OP algorithm. Then, the planning horizon for
LOGO-OP, $H'$, becomes smaller than that for LOGO, $H$, as the algorithm
finds improved function values. This is because the LOGO-OP algorithm
terminates each policy evaluation at line 15-6 when the upper bound on
the policy value is determined to be lower than $(V^+ - L)$.

\begin{corollary}\label{cor3}
Let $H'\le H$ be the planning horizon used by the LOGO-OP algorithm at
each policy evaluation. Let $V^+$ be the value of the best policy found
by the algorithm at any iteration. Assume that the value function of
the policy satisfies Assumptions~\ref{ass1} and \ref{assB1}. If $(V^+
- L)$ is maintained to be less than the center value of the optimal
hyperinterval, then the algorithm holds the finite-time loss bound of
Theorem~\ref{the2} with
\[
n\ge\bbfloor{\frac{m}{2H'}}.
\]
\end{corollary}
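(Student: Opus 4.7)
The plan is to reduce Corollary~\ref{cor3} to Theorem~\ref{the2} in two stages: first I would argue that, under the stated condition, the sequence of selection and division decisions made by LOGO-OP on the value function $V$ matches what pure LOGO would produce in the sense required by Lemma~\ref{lem2}, so that after $n$ divisions its loss is still bounded by Theorem~\ref{the2}; then I would convert the budget from planning steps $m$ to divisions $n$.

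For the first stage, I would track the stored value $\val[\omega]$ of every hyperinterval and establish two things. One, the optimal hyperinterval $\omega^*$ always satisfies $\val[\omega^*]=V(c_{\omega^*})$: line~15-6 can only terminate its inner loop when the partial sum's upper bound $z_1 + \gb^{t+1}\Rmax/(1-\gb)$ drops below $V^+ - L$, but this upper bound dominates $V(c_{\omega^*})$, which by hypothesis exceeds $V^+-L$, so the evaluation of $\omega^*$ is never truncated; the flooring in line~20 likewise cannot raise $\val[\omega^*]$ because $V(c_{\omega^*})>V^+-L$. Two, every other hyperinterval $\omega$ has $\val[\omega]\le \max(V(c_\omega),V^+-L)$, with equality to the floor whenever it was truncated. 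Since every floored interval carries a common value strictly below $\val[\omega^*]$, the arg-max comparison on line~11 behaves for the purpose of $\omega^*$ exactly as in pure LOGO, which is precisely the invariant used in the proof of Lemma~\ref{lem2} to bound $\tau(\Psi_k)-\tau(\Psi_{k-1})$. Hence, after $n$ divisions, LOGO-OP satisfies the bound of Theorem~\ref{the2} verbatim as a function of $n$.

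For the second stage, observe that every division (the branch entered when $\val[\omega_{h,i}]>\valmax$) performs exactly two new center-point policy evaluations via the inner for-loop at lines 15-2--15-10, since the central child inherits $\val$ from its parent. Each such evaluation consumes at most $H'$ planning steps by the definition of the effective horizon under early termination. Therefore the total planning budget after $n$ divisions obeys $m \le 2nH'$, which rearranges to $n \ge \floor{m/(2H')}$. Substituting this lower bound on $n$ into Theorem~\ref{the2} yields the stated regret bound.

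The main obstacle, I expect, lies in the first stage: formalizing that the flooring on line~20 and the monotone growth of $V^+$ do not break the chain of inequalities underlying Lemmas~\ref{lem1}--\ref{lem2}. Two subtleties must be addressed. First, an interval floored at an earlier iteration, when $V^+$ was smaller, may carry a stored value strictly less than the current $V^+-L$; line~20 re-floors at the end of each outer iteration, but one has to check that no comparison during the for-loop at lines~10--19 of the current iteration can let a stale, inflated value dominate $\omega^*$. Second, the hypothesis ``$(V^+ - L)$ is maintained to be less than the center value of the optimal hyperinterval'' has to be read as holding at every iteration where $\omega^*$ is a candidate for selection in pure LOGO, so that the reduction extends over the full prefix of $n$ divisions needed to invoke Theorem~\ref{the2}; making this temporal quantification explicit is the delicate bookkeeping step.
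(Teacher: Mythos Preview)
Your proposal is correct and follows essentially the same route as the paper: reduce to Lemma~\ref{lem2} by checking that the hypothesis $V^+-L<\val[\omega^*]$ preserves the comparison inequalities there, then convert the planning budget $m$ into divisions $n$ via $m\le 2nH'$. The paper's own proof is considerably terser---it simply asserts that ``only the effect that new parameter $L$ has in the loss analysis takes place in the proof of Lemma~\ref{lem2}'' and that under the hypothesis ``all the statements in the proof hold true''---whereas you spell out why the optimal interval is never truncated, why floored intervals cannot dominate it, and flag the stale-value and temporal-quantification subtleties around line~20; these are exactly the details the paper leaves implicit, so your expansion is sound and does not deviate in strategy.
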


\begin{proof}
As the policy search is just a special case of the optimization problem,
it is trivial that the loss bound of Theorem~\ref{the2} holds for
the LOGO algorithm when it is applied to policy search. Because every
function evaluation takes $H$ steps in the planning horizon, we have
$n\ge\floor{m/2H}$ in this case. For the LOGO-OP algorithm, only the
effect that new parameter $L$ has in the loss analysis takes place in
the proof of Lemma~\ref{lem2}. If $(V^+ - L)$ is maintained to be less
than the center value of the optimal interval, then all the statements
in the proof hold true for the LOGO-OP algorithm as well. Here, due the
effect of $L$, function evaluation may take less than $H$ steps in the
planning horizon. Therefore, we have the statement of this corollary.
\end{proof}

We can tighten the regret bound of the LOGO-OP algorithm by decreasing
$L$, since the algorithm can then terminate evaluations of unpromising
policies earlier, which means that the value of $H'$ in the bound is
reduced. However, using a too small value of $L$ that violates the
condition in Corollary~\ref{cor3} leads us to discard the theoretical
guarantee. Even in that case, because the too small value of $L$
only results in a more global search, the consistency property,
$\lim_{n\to\infty}r_n=0$, is still trivially maintained. On the other
hand, if we set $L =\infty$, the LOGO-OP algorithm becomes equivalent to
the direct application of the LOGO algorithm to policy search, and thus,
we have the regret bound of Corollary~\ref{cor3} with $H' = H$.

The pLOGO-OP algorithm also maintains the same regret bound with $n =
n_p$ where $n_p$ counts the number of the total divisions that are devoted
to the set of $\delta$-optimal hyperinterval $\psi_{kw+l}(l+1)^*$, where
$(w - 1)\ge l\ge 0$. While non-parallel versions ensure the devotion to
$\psi_{kw+l}(l+1)^*$, the parallelization makes it possible to conduct
division on other hyperintervals. Thus, considering the worst case,
the pLOGO-OP may not improve the bound in our proof procedure, although
the parallelization is likely beneficial in practice.

\subsection{Application Study on Nuclear Accident
Management}\label{sec3.4}

The management of the risk of potentially hazardous complex systems,
such as nuclear power plants, is a major challenge in modern society. In
this section, we apply the proposed method to accident management of
nuclear power plants and demonstrate the potential utility and usage of
our method in a real-world application. Our focus is on assessing the
efficiency of containment venting as an accident management measure and
on obtaining knowledge about its effective operational procedure (i.e.,
policy $\pi$). This problem requires planning with continuous state space
and with a very long planning horizon ($H\ge 86400$), for which dynamic
programming (e.g., value iteration), tree-based planning (e.g., $\rm A^*$
search and its variants) would not work well (dynamic programming suffers
from the curse of dimensionality for the state space, and the search
space of tree-based methods grows exponentially in the planning horizon).

Containment venting is an operation that is used to maintain the integrity
of the containment vessel and to mitigate accident consequences by
releasing gases from the containment vessel to the atmosphere.  In the
accident at the Fukushima Daiichi nuclear power plant in 2011, the
containment venting was activated as an essential accident management
measure. As a result, in 2012, the United States Nuclear Regulatory
Commission (USNRC) issued an order for $31$ nuclear power plants to
install the containment vent system {\DeclareRobustCommand{\us}[2]{#2}\cite{usn13}}. Currently, many
countries are considering the improvement of the containment venting
system and its operational procedures \cite{oec14}. The difficulty of
determining its actual benefit and effective operation comes from the fact
that the containment venting also releases fission products (radioactive
materials) into the atmosphere. In other words, the effective containment
venting must trade off the future risk of containment failure against
the immediate release of fission products (radioactive materials). In
our experiments, we use the release amount of the main fission product
compound, cesium iodide (CsI), as a measure of the effectiveness of the
containment venting.

In the nuclear accident management literature, an integrated physics
simulator is used as the model of world dynamics or the transition
function $T$ and the state space $S$. The simulator that we adopt in
this paper is THALES2 (\underline{T}hermal \underline{H}ydraulics and
radionuclide behavior \underline{A}nalysis of \underline{L}ight water
reactor to \underline{E}stimate \underline{S}ource terms under severe
accident conditions) \cite{ish02}. Thus, the transition function $T$
and the state space $S$ are fully specified by THALES2. The initial
condition $s_0 \in S$ is designed to approximately simulate the accident
at the Fukushima Daiichi nuclear power plant. In this experiment, we
focus on a single initial condition with the deterministic simulator,
the relaxation of which is discussed in the next section. The reward
function $R$ is the negative of the amount of CsI being released in the
atmosphere as a result of a state-action pair. We use the finite-time
horizon $H = 86400$ seconds ($24$ hours), which is a traditional first
phase time-window considered in risk analysis with nuclear power plant
simulations (owing to the assumption that after $24$ hours, many highly
uncertain human operations are expected). We use the following policy
structure based on our engineering judgment.
\[
\pi_x =
\begin{cases}
1 & \text{if }((\mathit{FP}\le x_1) \cap (\mathit{Press}\ge x_2)) \cup
(\mathit{Press} > 100490),\\
0 & \text{otherwise},
\end{cases}
\]
where $\pi_x = 1$ indicates the implementation of the containment venting,
$\mathit{FP}$ (g) represents the amount of CsI in the gas phase of the suppression
chamber, and $\mathit{Press}$ (kgf/m$^2$) is the pressure of the
suppression chamber. Here, the suppression chamber is the volume in the
containment vessel that is connected to the atmosphere via the containment
venting system. This policy structure reflects our engineering knowledge
that the venting should be done while the fission products exist under
a certain amount in the suppression chamber, but should not be operated
before the pressure gets larger than a specific value. We consider $x_1 =
[0, 3000]$ and $x_2 = [10810, 100490]$. We let $\pi_x = 1$ whenever the
pressure exceeds $100490$\,kgf/m$^2$, since the containment failure is
considered to probably occur after the pressure exceeds this point. The
detail of the experimental setting is outlined in Appendix~A.

We first compare the performance of various algorithms in this
problem. For all the algorithms, we used the same parameter settings as in
the benchmark tests in Section~\ref{sec2.4}. That is, we used $\hmax(n)
= w\sqrt{n} -w$ and a simple adaptive procedure for the parameter $w$
with $W = \{3, 4, 5, 6, 8, 30\}$. For the LOGO-OP algorithm and the
pLOGO-OP algorithm, we blindly set $L = 1000$ (i.e., there is likely a
better parameter setting for $L$). We used only eight parallel workers
for the pLOGO-OP algorithm.

\begin{figure}
\centering
\hair2pt
\labellist
\pinlabel \rotatebox{90}{CsI release by the Computed Policy (g)} [r] at 0 100
\pinlabel {Wall time (s)} [t] at 160 0
\endlabellist
\includegraphics[width=0.65\textwidth]{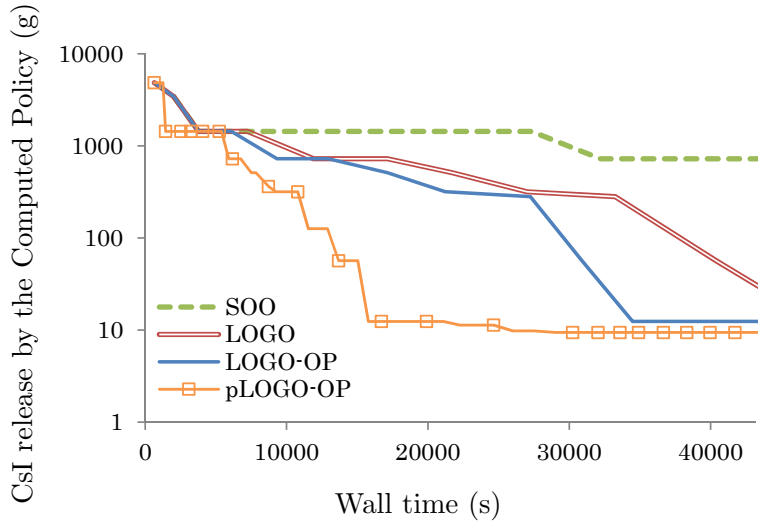}
\vskip5mm
\caption{Performance of the computed policy (CsI release) vs.\ Wall
Time.}\label{fig9}
\end{figure}

Figure~\ref{fig9} shows the result of the comparison with wall
time $\le 12$ hours. The vertical axis is the total amount of CsI
released into the atmosphere (g), which we want to minimize. Since
we conducted containment venting whenever the pressure exceeded
$100490$\,kgf/m$^2$, containment failure was prevented in all the
simulation experiments. Thus, the lower the value along the vertical
axis gets, the better the algorithm's performances is. As can be seen,
the new algorithms performed well compared to the SOO algorithm. It
is also clear that the two modified versions of the LOGO algorithm
improved the performance of the original. For the LOGO-OP algorithm,
the effect of $L$ on the computational efficiency becomes greater as
the found best policy improves. Indeed, the LOGO algorithm required
$10798$ seconds for ten policy evaluations and $52329$ seconds for
$48$ evaluations. The LOGO-OP algorithm required $9297$ seconds for
ten policy evaluations, and $44678$ seconds for $48$ evaluations. This
data in conjunction with Figure~\ref{fig9} illustrates the property of
the LOGO-OP algorithm that the policy evaluation becomes faster as the
found best policy improves. For the pLOGO-OP algorithm, the number of
function evaluations performed by the algorithm increased by a factor
of approximately eight (the number of parallel workers) compared to
the non-parallel versions. Notice that the parallel version tends to
allocate the extra resources to the global search (as opposed to the local
search). We can focus more on the local search by utilizing the previous
results of the policy evaluations; however, the parallel version must
initiate several policy evaluations without waiting for the previous
evaluations, resulting in a tendency for global search. This tendency
forced the improvement, in terms of reducing the amount of CsI, to be
moderate relative to the number of policy evaluations in this particular
experiment. However, such a tendency may have a more positive effect in
different problems where increased global search is beneficial. The CPU
time per policy evaluation varied significantly for different policies
owing to the different phenomenon computed in the simulator. On the
average, for the LOGO-OP algorithm, it took approximately $930$ seconds
per policy evaluation.

Now that we partially confirmed the validity of the pLOGO-OP algorithm,
we attempt to use it to provide meaningful information to this application
field. Based on the examination of the results in the above comparison,
we narrowed the range of the parameter values as $x_1 = [0, 1.2]$ and
$x_2 = [10330, 10910]$. After the computation with CPU time of $86400$
(s) and with eight workers for the parallelization, the pLOGO-OP
algorithm found the policy with $x_1\approx 0.195$ (g) and $x_2\approx
10880$ (kgf/m$^2$). With the policy determined, containment failure was
prevented and the total amount of CsI released into the atmosphere was
limited to approximately $0.5$ (g) (approximately $0.002\%$ of the total
CsI) in the $24$ hours after the initiation of the accident. This is a
major improvement because this scenario with our experimental setting
is considered to result in a containment failure or at best, in a large
amount of CsI release, more than $2000$ (g) (about $10\%$ of total CsI)
in our setting. The computational cost of CPU time of $86400$ (s) is
likely acceptable in the application field. In terms of computational
cost, we must consider two factors: the offline computation and the
variation of scenarios. The computational cost with CPU time of $86400$
(s) for a phenomenon that requires $86400$ (s) is not acceptable for
online computation (i.e., determining a satisfactory policy while the
accident is progressing). However, such computational cost is likely
acceptable if we consider preparing acceptable policies for various
scenarios in an offline manner (i.e., determining satisfactory polices
before the accident). Such information regarding these polices can be
utilized during an accident by first identifying the accident scenario
with heuristics or machine learning methods \cite{par10}. For
offline preparation, we must determine policies for various major
scenarios and thus if each computation takes, for example, one month,
it may not be acceptable.

\begin{figure}
\centering
\small\hair2pt
\labellist
\pinlabel \rotatebox{90}{Venting (-) / CsI (g)} [r] at -10 80
\pinlabel {Time along Accident Progression (s)} [t] at 230 -5
\endlabellist
\includegraphics[width=0.8\textwidth]{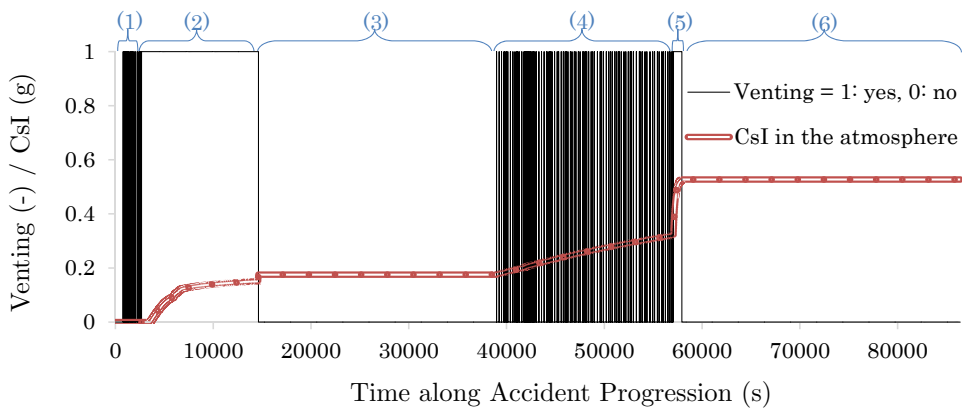}
\vskip5mm
\caption{Action sequence generated by found policy and CsI
release}\label{fig10}
\end{figure}

Note that the policy found by our method is both novel and nontrivial in
the literature, and yet worked very well. Accordingly, we explain why the
policy performed as well as it did. Figure~\ref{fig10} shows the action
sequence generated by the policy found and the amount of CsI (g) released
versus accident progression time (s). We analyze the action sequence
by dividing it into six phases, as indicated in Figure~\ref{fig10},
with the six numbers inside the parentheses. In the first phase (1),
the venting is conducted intermittently in order to keep the pressure
around $x_2\approx 10880$ (kgf/m$^2$). In this phase, no fission product
has yet been released from the nuclear fuels. Reducing the pressure and
the heat should be done preferably without releasing fission products,
and the actions in this phase accomplish this. One may wonder why the
venting should be done intermittently, instead of continuing to conduct
venting to reduce the pressure as much as possible, which can be done
without the release of fission products only in this phase. This is
because reducing the pressure too much leads to a large difference
between the pressures in the suppression chamber and the reactor pressure
vessel, which in turn results in a large mass flow and fission product
transportation from the reactor pressure vessel to the suppression chamber
(see Figure~\ref{fig11} in Appendix~A for information about
the mass flow paths). The increase in the amount of fission products in
the suppression chamber will likely result in a large release of fission
products into the atmosphere when venting is conducted. Therefore, this
specific value of $x_2$ that generates the intermittent venting works
well in the first phase. In the second phase (2), containment venting
is executed all the time since the pressure in the suppression chamber
increases rapidly in this phase (due to the operation of depressurizing
the reactor pressure vessel via the SRV line), and thus, the criterion
($\mathit{Press}\ge x_2$) in the policy is satisfied all the time from
this point. In the beginning of the third phase (3), the amount of CsI
in the suppression chamber exceeds $x_1\approx 0.195$ (g) and thereby
no venting is conducted. In the fourth phase (4), the pressure reaches
$100490$ (kgf/m$^2$) and containment venting is intermittently done
in order to keep the pressure under the point to avoid catastrophic
containment failure. In the fifth phase (5), the containment vent is
kept open because the amount of CsI in the gas phase of the suppression
chamber decreases to below $x_1$ (due to the phenomenon illustrated in
Figure~\ref{fig12} in Appendix~A). This continuous containment
venting decreases the pressure such that no venting is required in terms
of the pressure in the final phase (6), where venting is not conducted
also because the amount of CsI becomes larger than $x_1$.

Thus, it is clear that the policy found by this AI-related method also
has a basis in terms of physical phenomenon. In addition, the generated
action sequence is likely not simple enough for an engineer to discover
with several sensitivity analyses. In particular, not only did our method
solve the known tradeoff between the immediate CsI release and the risk of
future containment failure, the method also discovered the existence of
a new tradeoff between the immediate reduction of the pressure without
CsI release and future increase in the mass flow. Although there is
no consensus as to how to operate the containment venting system at the
moment, the tendency is to use it only when the pressure exceeds a certain
point in order to prevent immediate sever damage of containment vessel,
which corresponds only to the fourth phase (4) in Figure~\ref{fig10}. In
our experiment, such a myopic operation resulted in containment failure,
or a significantly large amount of CsI being released into the atmosphere
(at least more than $4800$ (g)). 

In summary, we successfully applied
the proposed method to investigate the containment venting policy in
nuclear power plant accidents. 
 As a preliminary
application study, several topics are left to future work. From a
theoretical viewpoint, future work should consider a way to mitigate the
simulation bias due to model error and model uncertainty. For the model
error, the robotics community is already cognizant that a small error in
a simulator can result in poor performance of the derived policy (i.e.,
simulation bias) \cite{kob13}. We can mitigate this problem by adding a
small noise to the model, since the noise works as \emph{regularization}
to prevent over-fitting as demonstrated by \citeA{atk98}. For the model
uncertainty, recent studies in the field of nuclear accident analysis
provide possible directions for the treatment of
uncertainty in accident phenomena \cite{zhe15} and accident scenarios \cite{kaw12}. As a result of
either or both of these countermeasures, the objective function becomes
stochastic, and thereby we may first expand the pLOGO-OP algorithm to
stochastic case. On the other hand, from the phenomenological point
of view, future work should consider other fission products as well as
CsI. Such fission products include, but are not limited to, Xe, Cs, I,
Te, Sr, and Ru. In particular, a noble gas element, such as Xe, can be a
major concern in an accident (it tends to be released a lot and is easily
diffused into the atmosphere), but its property is different from CsI (its
half-life is much smaller). Thus, if Xe is identified as a major concern,
one may consider a significantly different policy from ours (considering
its half-life, one may delay conducting the containment venting).

\section{Conclusions}\label{sec4}

In this paper, we proposed the LOGO algorithm, the global
optimization algorithm that is designed to operate well in practice
while maintaining a finite-loss bound with no strong additional
assumption. The analysis of the LOGO algorithm generalized previous
finite-loss bound analysis. Importantly, the analysis also provided several
insights regarding practical usage of this type of algorithm by showing
the relationship among the loss bound, the division strategy, and the
algorithm's parameters.  

We applied the LOGO algorithm to an AI planning problem with the policy
search framework, and showed that the performance of the algorithm can be
improved by leveraging not only the unknown smoothness in policy space,
but also the known smoothness in the planning horizon. As our study is
motivated to solve real-world engineering applications, we also discussed
a parallelization design that utilizes the property of AI planning in
order to minimize the overhead. The resulting algorithm, the pLOGO-OP
algorithm, was successfully applied to a complex engineering problem,
namely, policy derivation for nuclear accident management.

Aside from the planning problem that we considered, the LOGO algorithm can
be also used, for example, to optimize parameters of other algorithms (i.e., algorithm configuration). In
the AI community, the algorithm configuration problem has been addressed
by several methods, including a genetic algorithm  \cite{ans09}, discrete optimization with convergence guarantee
in the limit \cite{hut09}, the racing approach originated from the
machine learning community (Hoeffding Races) \cite{bir10}, model-based
optimization with convergence guarantee in the limit \cite{hut11},
a simultaneous use of several randomized local optimization methods
\cite{gyo11}, and Bayesian optimization \cite{sno12}. Compared to
the previous parameter tuning methods, the LOGO algorithm itself is
limited to optimizing continuous deterministic functions. To apply it
to stochastic functions, a future work would modify the LOGO algorithm
as was done for the SOO algorithm in a previous study \cite{val13}. To
consider categorical and/or discrete parameters in addition to continuous
parameters, a possibility could be to use the LOGO algorithm as a subroutine to deal with the continuous variables in one of the previous methods.

The promising results presented in this paper suggest several interesting
directions for future research.  An important direction is to leverage additional assumptions. Since LOGO is based on a weak set of assumptions, it would be natural to use LOGO as a main subroutine but add other  mechanisms to account for additional assumptions. As a example, we illustrated that LOGO would be able to  scale up for a higher dimension with additional assumptions in Section \ref{sec2.4}. Another possibility is to add a GP assumption  based on the idea presented in a recent  paper \cite{kawaguchiNIPS2015}.
Future work also would design an autonomous agent by integrating our planning algorithm with a learning/exploration algorithm \cite{kawaguchiAAAI2016}. One remaining challenge of LOGO is to
derive a series of methods that adaptively determine the algorithm's
free parameter $w$. As illustrated in our experiment, the achievement
in this topic not only mitigates the problem of parameter sensitivity,
but also would improve the algorithm's performance. 

\acks{This work was carried out while 
the first author was at the Japan Atomic Energy Agency. The authors
would like to thank Dr.~Hiroto Itoh and Mr.~Jun Ishikawa at JAEA for several discussions on the related topics. The authors
would like to thank Mr.~Lawson Wong at MIT for his insightful comments. The authors would like to thank anonymous reviewers for
their insightful and constructive comments. }

\appendix
\section*{Appendix A. Experimental Design of Application Study on Nuclear Accident
Management}
\refstepcounter{foo}
\label{appA}

\begin{figure}[t!]
\centering
\includegraphics[width=0.8\textwidth]{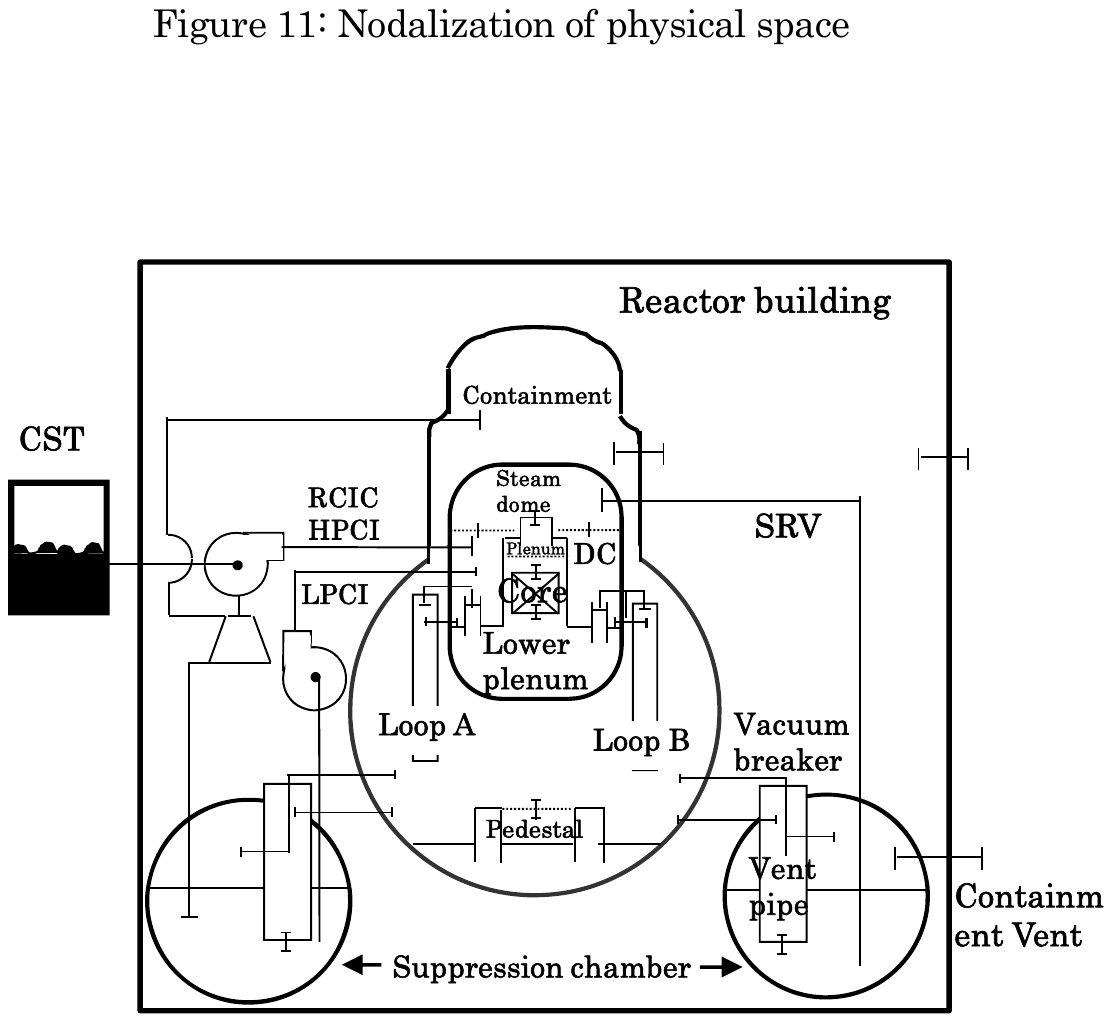}
\small
\caption{Nodalization of physical space}\label{fig11}
\end{figure}

\begin{figure}[b!]
\centering
\labellist
\footnotesize\hair2pt
\pinlabel {\textbf{\begin{scriptsize}FP: Fission Products\end{scriptsize}}} [tl] at 50 -4
\endlabellist
\includegraphics[width=0.85\textwidth]{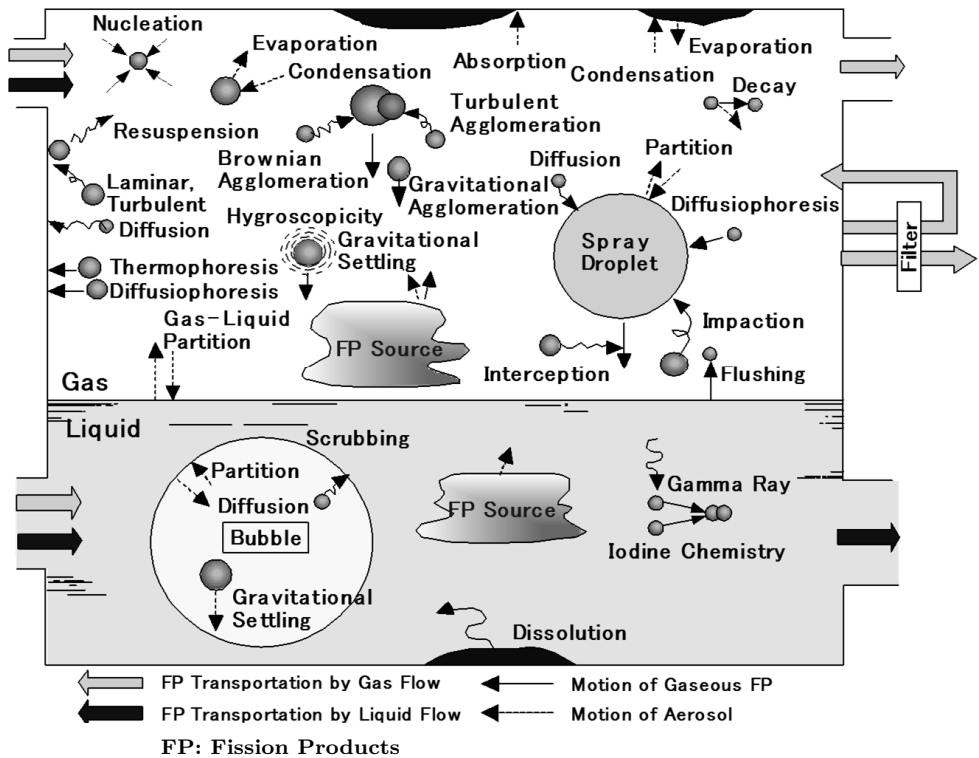}
\vspace{10pt}
\caption{Phenomenon considered for fission product
transportation}\label{fig12}
\end{figure}

In this appendix, we present the experimental setting for
the \emph{Application Study on Nuclear Accident Management} in
Section~\ref{sec3.4}. With THALES2, we consider the volume nodalization
as shown in Figure~\ref{fig11}. The reactor pressure vessel was divided
into seven volumes, consisting of core, upper plenum, lower plenum, steam
dome, downcomer, and recirculation loops A and B. The containment vessel
consists of drywell, suppression chamber, pedestal and vent pipes. The
atmosphere and suppression chamber are connected via the containment
venting system (S/C venting). The plant data and initial conditions were
determined based on the data of the Unit 1 of the Browns Ferry nuclear
power plant (BWR4/Mark-I) and \emph{the construction permit application
forms} of BWR plants in Japan. The failure of the containment vessel is
assumed to occur when the pressure of the vessel becomes $2.5$ times
greater than the design pressure. Here, the design pressure is $3.92$
(kgf/cm$^2$g) and the criterion for the containment failure is 
$108330$\,kgf/m$^2$. The degree of opening for the containment venting was fixed
at $25\%$ and no filtering was considered.

We consider the TQUV sequence as the accident scenario. In the TQUV
sequence, no Emergency Core Cooling Systems (ECCSs) functions, similar
to the case of the accident at the Fukushima Daiichi nuclear power
plant. The TQUV sequence is one of the major scenarios considered in
Probabilistic Risk Assessment (PRA) of nuclear plants. Therefore, our
results show a promising benefit of containment venting, as long as we
use it with a good policy.

The simulator we developed at the Japan Atomic Energy Agency and
adopted in this experiment (THALES2) computes the transportation
of fission products as well as thermal hydraulics in each volume
of Figure~\ref{fig11} and core melt progression in the \emph{Core}
volume. The transportation of fission products considered in the
experiment is shown in Figure~\ref{fig12}. The details of the computation
of the THALES2 code are found in the paper by \citeauthor{ish02} \citeyear{ish02}. Figure~\ref{fig11} and
Figure~\ref{fig12} are the modified versions of the graphs used in a
previous presentation about the ongoing development of the THALES2 code,
which was given at the USNRC's 25th Regulatory Information Conference
\cite{mar13}.

\vskip 0.2in
\bibliography{refs}
\bibliographystyle{theapa}

\end{document}